\documentclass[10pt,a4paper]{article}

\usepackage{amsfonts}
\usepackage{amssymb}
\usepackage{amsthm}
\usepackage[english]{babel}
\usepackage{hhline}
\usepackage[utf8]{inputenc}
\usepackage{amsmath}
\usepackage[all,2cell,ps]{xy}
\usepackage{qsymbols}
\usepackage{color}
\usepackage{epsfig}
\usepackage{color}
\usepackage{graphics}
\usepackage{graphicx}%
\usepackage{enumerate}
\usepackage{tikz}
\usepackage[bottom]{footmisc}
\usepackage{authblk}

\usepackage[colorlinks,linkcolor=black,citecolor=blue,urlcolor=blue]{hyperref}
\hyphenation{ca-te-go-ry}

\theoremstyle{plain}

\newtheorem{theorem}{Theorem}
\newtheorem{lemma}[theorem]{Lemma}
\newtheorem{proposition}[theorem]{Proposition}

\newtheorem{definition}[theorem]{Definition}
\newtheorem{example}[theorem]{Example}
\newtheorem{remark}[theorem]{Remark}

\newcommand{\hIL}{\mathsf{hIL}}
\newcommand{\SRL}{\mathsf{SRL}}
\newcommand{\KhIL}{\mathsf{KhIL}}
\newcommand{\KhILc}{\mathsf{KhIL}^{c}}
\newcommand{\NA}{\mathsf{NA}}
\newcommand{\SNA}{\mathsf{SNA}}
\newcommand{\SH}{\mathsf{SH}}
\newcommand{\SN}{\mathsf{SN}}

\newcommand{\ce}{\mathsf{c}}
\newcommand{\CK}{\mathsf{(CK)}}
\newcommand{\CC}{\mathsf{(C)}}
\newcommand{\C}{\mathsf{C}}
\newcommand{\K}{\mathrm{K}}
\newcommand{\Con}{\mathrm{Con}}
\newcommand{\co}{\theta}
\newcommand{\ra}{\rightarrow}
\newcommand{\rla}{\leftrightarrow}
\newcommand{\Ra}{\Rightarrow}
\newcommand{\we}{\wedge}

\newcommand{\s}{\sim}
\newcommand{\gIF}{\mathrm{hIF}}

\title{Hemi-Nelson algebras}

\author{Noem\'i Lubomirsky, Paula Mench\'on and Hern\'an Javier San Mart\'in}

\date{}

\begin{document}

\maketitle

\begin{abstract}
The aim of this paper is to generalize the link between Heyting algebras and Nelson algebras, 
established independently by Fidel and Vakarelov at the end of the 1970s, 
in the framework of bounded distributive hemi-implicative lattices.
For this purpose, we introduce the variety of hemi-Nelson algebras. 
Moreover, we characterize the lattice of congruences of a hemi-Nelson 
algebra in terms of certain implicative filters. We also esta\-blish an equivalence 
between the algebraic category of bounded distributive hemi-implicative lattices and 
the one of centered hemi-Nelson algebras.
\end{abstract}

\section{Introduction}

In this paper we focus on unifying ideas derived from several algebraic varieties connected to intuitionistic logics,
as for instance Heyting algebras \cite{BD}, semi-Heyting algebras \cite{S1}, subresiduated lattices \cite{EH},
Nelson algebras \cite{V}, semi-Nelson algebras \cite{CV} and subresiduated Nelson algebras \cite{LMSM}.

Hemi-implicative lattices \cite{CFMSM0} are defined as lattices with a greatest element, which will be denoted by $1$,
endowed with a binary operation $\ra$ satisfying the equation $x\ra x = 1$ and the inequality $x\we (x\ra y)\leq y$.
The previous inequality can be replaced by the following quasi-equation:
\[
\text{If}\; z\leq x\ra y\;\text{then}\; z\we x\leq y.
\] 
A bounded distributive hemi-implicative lattice is defined as a
hemi-implicative lattice whose underlying lattice is distributive and
has a least element
\footnote{Bounded distributive hemi-implicative lattices were introduced and studied in \cite{SM2} under
the name of weak implicative lattices.}.
Many varieties of interest for algebraic logic are subvarieties of the variety of 
bounded distributive hemi-implicative lattices. Some relevant examples of these
algebras are Heyting algebras, subresiduated lattices, RWH-algebras \cite{CJ}, semi-Heyting algebras 
and Hilbert bounded distributive lattices (i.e., Hilbert algebras \cite{CCM,D} whose
natural order defines a bounded distributive lattice). 

Nelson's constructive logic with strong negation, which was introduced and studied in \cite{Nel} (see also \cite{R,S,V}),
is a non-classical logic that combines the constructive approach of positive intuitionistic logic with a classical (i.e. De Morgan) negation.
The algebraic models of this logic are called Nelson algebras. The class of Nelson algebras,
which is a variety, has been studied since at least the late 1950's (firstly by Rasiowa; see \cite{R} and
references therein). In the the end of the 1970's it was proved, independently by Fidel and Vakarelov, 
that every Nelson algebra can be represented as a special binary product (here called a twist structure) 
of a Heyting algebra. 

The main goal of the present manuscript is to extend the twist construction in the framework of 
bounded distributive hemi-implicative lattices, thus obtai\-ning a new variety, whose members will be called 
hemi-Nelson algebras. More precisely, we show that every hemi-Nelson algebra
can be represented as a twist structure of a bounded distributive hemi-implicative lattice. 

Hemi-Nelson algebras are an extension of Kleene algebras, together with a hemi-implication. These structures serve as an algebraic framework for non-classical logics, specifically those that seek to balance the properties of constructive logic with the involutive negation found in Kleene's systems. In this context, the logic of Kleene algebras was thoroughly studied and characterized in \cite{KB}, where a formalization of its properties is provided.

The paper is organized as follows. In Section \ref{s1} we recall the definition of Nelson algebra and 
sketch the main constructions linking Heyting algebras with Nelson algebras. Moreover,
we recall the definition of bounded distributive hemi-implicative lattice and we give some of its properties. 
In Section \ref{s2} we introduce hemi-Nelson algebras
and show that every hemi-Nelson algebra can be represented as a twist structure over a 
bounded distributive hemi-implicative lattice.
In Section \ref{s3} we prove that given an arbitrary hemi-Nelson algebra, there exists an order isomorphism
between the lattice of its congruences and the lattice of its h-implicative filters 
(which are a kind of implicative filters).
In Section \ref{s4} we characterize those hemi-Nelson algebras that 
can be represented as a twist structure of a bounded distributive hemi-implicative lattice 
and we also prove that there exists  an equivalence between the algebraic category
of bounded distributive hemi-implicative lattices and the algebraic category of centered  
hemi-Nelson algebras, where a centered hemi-Nelson algebra is defined as a hemi-Nelson algebra 
endowed with a center, i.e., a fixed element with respect to the involution 
(this element is necessarily unique).  
Finally, in Section \ref{s5} we applied some results developed throughout the present paper
in order to investigate certain aspects of semi-Nelson algebras and subresiduated Nelson algebras respectively.

\section{Basic results}\label{s1}

In this section we recall the definition of Nelson algebra \cite{BC,C,Vig} and
some links between Heyting algebras and Nelson algebras \cite{V}. 
Finally we recall the definition of bounded distributive hemi-implicative lattices and some of their properties \cite{SM2}.

Recall that a \textit{Kleene algebra} \cite{BC,C,K} is a bounded distributive lattice 
endowed with a unary operation $\sim$ which satisfies the following equations: 
\begin{enumerate}[\normalfont Ne1)]
\item $\sim \sim x = x$,
\item $\sim (x\we y) = \sim x \vee \sim y$,
\item $(x\we \sim x)\we (y \vee \sim y) = x\we \sim x$.
\end{enumerate}

\begin{definition}\label{generalised2}
An algebra $\langle T,\we,\vee,\ra,\sim,0,1 \rangle$ of type $(2,2,2,1,0,0)$ is called a 
\textit{Nelson algebra} if $\langle T,\we,\vee,\sim,0,1 \rangle$ is a Kleene algebra and 
for every $x,y,z\in T$ the following equations are satisfied:
\begin{enumerate}[\normalfont Ne4)]
\item $x\ra x = 1$,
\end{enumerate}
\begin{enumerate}[\normalfont Ne5)]
\item $x\ra (y\ra z) = (x\we y)\ra z$,
\end{enumerate}
\begin{enumerate}[\normalfont Ne6)]
\item $x\we (x\ra y) = x \we (\sim x \vee y)$,
\end{enumerate}
\begin{enumerate}[\normalfont Ne7)]
\item $\sim x\vee y \leq x\ra y $, 
\end{enumerate}
\begin{enumerate}[\normalfont Ne8)]
\item $x\ra (y\we z) = (x\ra y)\we (x\ra z)$.
\end{enumerate}
\end{definition}

Let $\langle T,\we,\vee,\ra,\sim,0,1\rangle$ be an algebra of type $(2,2,2,1,0,0)$.
Assume that $\langle T,\we,\vee,0,1\rangle$ is a bounded distributive lattice
and that condition~Ne6) holds. Then conditions Ne7) and Ne8) also hold \cite{Mon1,Mon2}. 
We write $\NA$ for the variety of Nelson algebras.

There are two key constructions that relate Heyting algebras with Nelson algebras.
Given a Heyting algebra $A$, we define the set 
\begin{equation} \label{ka}
\K(A) = \{(a,b)\in A\times A: a\we b = 0\}
\end{equation}
and then endow it with the following operations:
\begin{itemize}
\item $(a,b)\we (c,d) = (a\we c,b\vee d)$,
\item $(a,b)\vee (c,d) = (a\vee c, b\we d)$,
\item $\s (a,b) = (b,a)$,
\item $(a,b)\Ra (c,d) = (a\ra c,a\we d)$,
\item $\perp = (0,1)$, 
\item $\top = (1,0)$.
\end{itemize}
Then $\langle \K(A),\we,\vee,\Ra,\perp,\top \rangle\in \NA$ \cite{V}. In the same manuscript,
Vakarelov proves that if $T\in \NA$, then the relation $\theta$ 
defined by 
\begin{equation}\label{rel}
x \theta y\;\text{if and only if}\; x\ra y = 1\;\text{and}\; y\ra x = 1 
\end{equation}
is an equivalence relation such that $\langle T/\theta,\we,\vee,\ra,0,1 \rangle$ is a Heyting algebra 
with the operations defined by
\begin{itemize}
\item $x/\co \we y/\co := x \we y/\co$,
\item $x/\co \vee y/\co := x \vee y/\co$,
\item $x/\co \ra y/\co := x\ra y/\co$,
\item $0 := 0/\co$,
\item $1 := 1/\co$.
\end{itemize}

Now we recall the definition of bounded distributive hemi-implicative lattice.

\begin{definition}
An algebra $\langle A,\we,\vee,\ra,0,1 \rangle$ of type $(2,2,2,0,0)$ is said to be a \textit{bounded distributive
hemi-implicative lattice} (h-lattice for short) if the algebra $(A,\we,\vee,\ra,0,1)$ is a bounded distributive
lattice and the following two conditions are satisfied for every $a,b \in A$:
\begin{enumerate}[\normalfont 1)]
\item $a\ra a = 1$,
\item $a\we (a\ra b)\leq b$.
\end{enumerate}
\end{definition}

We write $\hIL$ for the variety whose members are h-lattices.
Given $A\in \hIL$ and $a,b \in A$, we define $a\rla b = (a\ra b)\we (b\ra a)$.

\begin{lemma} \label{r1}
Let $A\in \hIL$ and $a,b \in A$. Then the following conditions are satisfied:
\begin{enumerate}[\normalfont 1)]
\item If $a\ra b = 1$, then $a\leq b$,
\item $a\ra b = 1$ and $b\ra a = 1$ if and only if $a = b$,
\item $a\we (a\rla b) = b \we (a\rla b)$.
\end{enumerate}
\end{lemma}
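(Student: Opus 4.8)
The plan is to prove the three items in order, using only the defining conditions of an h-lattice (namely $a\ra a=1$ and $a\we(a\ra b)\le b$) together with the equivalent quasi-equation stated in the introduction, $z\le a\ra b \Rightarrow z\we a\le b$.

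\emph{Item 1.} Suppose $a\ra b = 1$. Then trivially $1\le a\ra b$, and since $a\le 1$ we get $a\le a\ra b$, so by the quasi-equation $a\we a\le b$, i.e.\ $a\le b$. (Alternatively, apply the inequality $a\we(a\ra b)\le b$ directly with $a\ra b = 1$ to obtain $a = a\we 1\le b$.)

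\emph{Item 2.} The forward direction is immediate from Item 1: if $a\ra b = 1$ and $b\ra a = 1$, then $a\le b$ and $b\le a$, hence $a = b$ by antisymmetry of the lattice order. For the converse, if $a = b$ then $a\ra b = a\ra a = 1$ and $b\ra a = a\ra a = 1$ by the axiom $x\ra x = 1$.

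\emph{Item 3.} Recall $a\rla b = (a\ra b)\we(b\ra a)$. Since $a\rla b\le a\ra b$, the quasi-equation gives $(a\rla b)\we a\le b$; combining with the obvious bound $(a\rla b)\we a\le a\rla b$ yields $(a\rla b)\we a\le (a\rla b)\we b$. By the symmetric argument (using $a\rla b\le b\ra a$) we get $(a\rla b)\we b\le (a\rla b)\we a$, and antisymmetry gives $a\we(a\rla b) = b\we(a\rla b)$.

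None of these steps presents a genuine obstacle; the only point requiring a little care is recognizing in Item 3 that one should meet both sides with $a\rla b$ before invoking the quasi-equation, so that the two inequalities can be compared and antisymmetry applied. Everything else is a direct unwinding of the axioms and basic lattice arithmetic.
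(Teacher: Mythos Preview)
Your proof is correct and is exactly the kind of straightforward computation the paper alludes to; the paper's own proof of this lemma is simply the one-line remark ``It follows from a straightforward computation,'' so your argument fills in precisely what was left implicit.
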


\begin{proof}
It follows from a straightforward computation.
\end{proof}

The following example can be found in \cite[Example 1]{SM2}.

\begin{example}\label{example1}
	The set $A=\lbrace 0,a,1\rbrace$ where the elements are totally ordered $0<a<1$ and the operation $\ra$ is defined by
	\begin{center}
	\begin{tabular}{|c|c|c|c|}
		\hline
		$\ra$& $0$ & $a$ & $1$ \\
		\hline
		$0$ & $1$ & $a$ & $1$ \\
		\hline
		$a$ & $0$ & $1$ & $0$ \\
		\hline
		$1$ & $0$ & $a$ & $1$ \\
		\hline
	\end{tabular}
	\end{center}
	is an h-lattice.
\end{example}

\section{Hemi-Nelson algebras} \label{s2}

In this section we define hemi-Nelson algebras and we show 
that every hemi-Nelson algebra can be represented as a 
twist structure of an h-lattice.

Let $A\in \hIL$. We define $\K(A)$ as in (\ref{ka}). 
Then $\langle \K(A),\we,\vee,\s,(0,1),(1,0) \rangle$ is a Kleene algebra \cite{C,K}.
On $\K(A)$ we also define the binary operation $\Ra$ as in Section \ref{s1}.
Note that this is a well defined map because if $(a,b)$ and $(c,d)$ are elements of $\K(A)$
then $(a\ra c)\we a\we d \leq c\we d = 0$, i.e., $(a\ra c)\we a\we d = 0$.
Thus, the structure $\langle \K(A),\we,\vee,\Ra,\s,(0,1),(1,0) \rangle$ is an algebra of type
$(2,2,2,1,0,0)$.

\begin{definition}\label{generalised}
An algebra $\langle T, \we, \vee, \ra, \s, 0, 1\rangle$ of type $(2, 2, 2, 1, 0, 0)$ is said to be a
\textit{hemi-Nelson algebra} (h-Nelson algebra for short) 
if $\langle T, \wedge, \vee, \sim, 0, 1\rangle$ is a Kleene algebra and the
following conditions are satisfied for every $x,y,z\in T$:
\begin{enumerate} [\normalfont (hN1)]
\item $x\ra x=1$, \label{e1}
\item $x\we (x\ra y)\leq x\we (\s x\vee y)$, \label{e2}
\item $\s(x\ra y)\ra (x\we \s y)=1$, \label{e3}
\item $(x\we \s y)\ra \s(x\ra y)=1$, \label{e4}
\item $(x\we y \we (x\ra y)) \ra (x \we (x\ra y)) = 1$, \label{e5}
\item $(x\we (x\ra y)) \ra (x\we y \we (x\ra y)) = 1$, \label{e6}

\item if $x\ra y = 1$, $y\ra x = 1$, $y\ra z = 1$ and $z\ra y = 1$ then $x\ra z = 1$ and $z\ra x = 1$, \label{trans}
\item if $x\ra y = 1$ and $y\ra x = 1$ then $(x\we z)\ra (y\we z) = 1$,\label{infimo}
\item if $x\ra y = 1$ and $y\ra x = 1$ then $(x\vee z)\ra (y\vee z) = 1$,\label{supremo}
\item if $x\ra y = 1$ and $y\ra x = 1$ then $(x\ra z) \ra (y\ra z) = 1$ and $(z\ra x) \ra (z\ra y) = 1$. \label{implic}
\end{enumerate}
\end{definition}

We write $\KhIL$ for the class whose members are h-Nelson algebras.

\begin{proposition} \label{Kalman}
If $A\in \hIL$ then $\K(A)\in \KhIL$.
\end{proposition}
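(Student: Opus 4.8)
The plan is to verify each of the ten conditions (hN1)--(hN10) for the algebra $\K(A)$, working with pairs $(a,b),(c,d),(e,f)\in\K(A)$ and freely using the defining identities of an h-lattice together with Lemma~\ref{r1}. First I would recall that $\langle\K(A),\we,\vee,\s,(0,1),(1,0)\rangle$ is already known to be a Kleene algebra, so only the conditions involving $\Ra$ remain. A key observation to isolate at the outset is that in $\K(A)$, the first coordinate of $x\Ra y$ is $a\ra c$, so the statement ``$x\Ra y=(1,0)$'' is equivalent to ``$a\ra c=1$ and $a\we d=0$''; since $(a,b)\in\K(A)$ gives $a\we b=0$ and $a\vee b$ need not be $1$, one must be slightly careful, but in all the relevant hypotheses the premise $x\Ra y=(1,0)$ will force $a\ra c=1$, hence $a\le c$ by Lemma~\ref{r1}(1). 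This reduces many of the quasi-equational conditions to their first-coordinate content.

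Concretely, (hN1) is immediate since $(a,b)\Ra(a,b)=(a\ra a,\,a\we b)=(1,0)$. For (hN2), I would compute both sides: $x\we(x\Ra y)=(a,b)\we(a\ra c,\,a\we d)=(a\we(a\ra c),\,b\vee(a\we d))$ while $x\we(\s x\vee y)=(a,b)\we(b\vee c,\,a\we d)=(a\we(b\vee c),\,b\vee(a\we d))$; using $a\we b=0$ and $a\we(a\ra c)\le c$ one checks the first coordinate of the left side is $\le$ that of the right, and the second coordinates agree, so the Kleene order gives the inequality. Conditions (hN3) and (hN4) together say $\s(x\Ra y)$ and $x\we\s y$ have the same image under the equivalence $\co$ of \eqref{rel}; but in fact $\s(x\Ra y)=(a\we d,\,a\ra c)$ and $x\we\s y=(a,b)\we(d,c)=(a\we d,\,b\vee c)$, so the first coordinates are literally equal and one only needs $a\ra c$ and $b\vee c$ to satisfy $(a\ra c)\ra(b\vee c)=1$ and $(b\vee c)\ra(a\ra c)=1$; since these pairs lie in $\K(A)$ and share a first coordinate, Lemma~\ref{r1}(2) applied coordinatewise — once one shows two elements of $\K(A)$ with equal first coordinate that is an annihilator of both second coordinates must in fact be equal (here using $a\we d=0$ forces... ) — hmm, this needs the genuine computation that $b\vee c$ is determined, which follows because $(a,b),(c,d)\in\K(A)$ with $a\we b=0=c\we d$ yields $a\we d=0$ and one shows directly $\s(x\Ra y)=x\we\s y$ as elements of $\K(A)$. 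Similarly (hN5)--(hN6) reduce to the single equality $x\we y\we(x\Ra y)=x\we(x\Ra y)$ in $\K(A)$, which in the first coordinate is $a\we c\we(a\ra c)=a\we(a\ra c)$, true because $a\we(a\ra c)\le c$.

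The last four conditions (hN7)--(hN10) are quasi-equations whose hypotheses, by the first-coordinate reduction above, amount to first-coordinate (in)equalities in $A$ of the form $a\ra c=1$, i.e.\ $a\le c$; the conclusions are then first-coordinate statements that follow from monotonicity-type properties one can read off directly. For (hN7), from $a\le c$ and $c\le a$ we get $a=c$ by Lemma~\ref{r1}(2), likewise $c=e$, hence $a=e$, giving $x\Ra z=(1,0)$ and $z\Ra x=(1,0)$. For (hN8)--(hN10) the hypothesis forces $a=c$ (and $b=d$, since $a\we d=0=c\we b$ together with $a=c$ and the Kleene structure pin down the second coordinates — more simply, $x\Ra y=(1,0)=y\Ra x$ already yields $x=y$ in $\K(A)$ via Ne-type reasoning), so $x=y$ and the conclusions are trivial reflexivity instances $w\Ra w=(1,0)$. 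The main obstacle I anticipate is the bookkeeping in (hN3)--(hN4), namely making fully precise that two elements of $\K(A)$ agreeing in the first coordinate (which annihilates both their second coordinates) must coincide — this is where one genuinely uses the definition of $\K(A)$ rather than just $A\in\hIL$ — and confirming that ``$x\Ra y=(1,0)$ and $y\Ra x=(1,0)$'' collapses to ``$x=y$'' in $\K(A)$, which then makes (hN7)--(hN10) nearly automatic. Everything else is a short direct computation, so I would present (hN1)--(hN2) and (hN5)--(hN6) as one-line checks, devote a short paragraph to (hN3)--(hN4), and dispatch (hN7)--(hN10) via the collapse observation.
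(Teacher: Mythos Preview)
Your plan has a genuine gap in the handling of (hN3)--(hN6) and (hN8)--(hN10): you repeatedly try to collapse a condition of the form $u\Ra v=(1,0)$ (or its conjunction with $v\Ra u=(1,0)$) to a literal equality $u=v$ in $\K(A)$, and this collapse is false. Two elements of $\K(A)$ that share the same first coordinate need not be equal: the second coordinate is constrained only by having meet $0$ with the first. Concretely, take $x=(0,b)$ and $y=(0,d)$ with $b\neq d$ in any nontrivial $A$; then $x\Ra y=(0\ra 0,\,0\we d)=(1,0)$ and likewise $y\Ra x=(1,0)$, yet $x\neq y$. So the assertion that ``$x\Ra y=(1,0)=y\Ra x$ already yields $x=y$ in $\K(A)$,'' on which you rest (hN8)--(hN10), fails. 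Similarly $\s(x\Ra y)=(a\we d,\,a\ra c)$ and $x\we\s y=(a\we d,\,b\vee c)$ agree in the first coordinate but not in the second (take $a=b=c=d=0$: the second coordinates are $1$ and $0$), so they are not equal in $\K(A)$; and $x\we y\we(x\Ra y)$ has second coordinate $b\vee d$ while $x\we(x\Ra y)$ has second coordinate $b\vee(a\we d)$, again different in general. None of your equality reductions goes through.

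What is actually required --- and what the paper does --- is to verify, for each target $u\Ra v=(1,0)$, \emph{both} that $\pi_1(u)\ra\pi_1(v)=1$ in $A$ \emph{and} that $\pi_1(u)\we\pi_2(v)=0$. The first-coordinate part you have essentially right, using $a\we(a\ra c)=a\we c\we(a\ra c)$ and Lemma~\ref{r1}. The missing half is the second-coordinate vanishing, which in every case follows from distributivity together with the relations $a\we b=c\we d=e\we f=0$ and the h-lattice inequality $a\we(a\ra c)\le c$. For instance, in (hN8) the hypothesis gives $a=c$ and $a\we d=0$; then the second coordinate of $(x\we z)\Ra(y\we z)$ is $(a\we e)\we(d\vee f)=(a\we e\we d)\vee(a\we e\we f)=0$, using $a\we d=0$ and $e\we f=0$. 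Analogous short computations handle each of (hN3)--(hN7), (hN9) and (hN10). Your outline can therefore be repaired, but only by abandoning the equality shortcuts and carrying out these second-coordinate checks explicitly.
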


\begin{proof}
Let $A\in \hIL$. We will show that $\K(A)$ satisfies the conditions of Definition
\ref{generalised2}. In order to prove it, let $x=(a,b)$, $y = (c,d)$ and $z = (e,f)$ be elements of $\K(A)$.

It is immediate that $x\ra x = (1,0)$, so (hN1) 
is satisfied.

In order to show (hN2),  
note that
\[
x\we (x\ra y) = (a\we (a\ra c), b \vee (a\we d)),
\]
\[
x\we (\s x \vee y) = (a\we (b\vee c), b\vee (a\we d)).
\]
Since $a\we (a\ra c) \leq a\we c \leq a\we (b\vee c)$ then
\[
x\we (x\ra y)\leq x\we (\s x\vee y),
\]
so (hN2) is satisfied.

A direct computation shows that
\[
\s(x\ra y)\ra (x\we \s y) = (1,a\we d \we (b\vee c)).
\]
But 
\[
a\we d \we (b\vee c) = (a\we d \we b) \vee (a\we d \we c) = 0,
\]
so 
\[
\s(x\ra y)\ra (x\we \s y) = (1,0),
\]
which is (hN3).

Condition (hN4)
follows from the equality
\[
(x\we \s y)\ra \s(x\ra y) = (1, a\we d\we (a\ra c))
\]
and the fact that $a\we d\we (a\ra c) \leq c\we d = 0$, i.e.,
$a\we d\we (a\ra c) = 0$.

Now we will show (hN5) and (hN6).
Straightforward computations
show that
\begin{equation} \label{a1}
x\we y\we (x\ra y) = (a\we c\we (a\ra c),b\vee d),
\end{equation}
\begin{equation} \label{a2}
x\we (x\ra y) = (a\we (a\ra c), b\vee (a\we d)).
\end{equation}
Besides, 
$a\we (a\ra c)\leq a\we c\we (a\ra c)$, so
\begin{equation} \label{a3}
a\we c\we (a\ra c) = a\we (a\ra c).
\end{equation} 
Moreover, taking into account the distributivity of the underlying 
lattice of $A$ and the facts that $a\we b = c\we d = 0$ and $a\we (a\ra c)\leq c$, 
we get
\begin{equation} \label{a4}
(a\we c\we (a\ra c))\we (b\vee (a\we d)) = 0,
\end{equation}
\begin{equation} \label{a5}
(a\we (a\ra c))\we (b\vee d) = 0.
\end{equation}
Thus, a direct computation based on equations (\ref{a1}),\dots,(\ref{a5}) shows that conditions
(hN5) and (hN6)
are satisfied.

Suppose that $x\ra y = 1$, $y\ra x = 1$, $y\ra z = 1$ and $z\ra y = 1$, so
$a\ra c = 1$, $a\we d = 0$, $c\ra a = 1$, $c\we b = 0$, $c\ra e = 1$,
$c\we f = 0$, $e\ra c = 1$ and $e\we d = 0$. In particular, it follows
from Lemma \ref{r1} that $a = c = e$, so
\[
x \ra z = (1,a\we f).
\]
However, $a\we f =  e\we f = 0$.
Hence, $x\ra z = (1,0)$. Analogously, we have that $z\ra x = (1,0)$.
Thus, (hN7) 
is satisfied.

Finally, we will show (hN8) and (hN9).
Suppose that $x\ra y = 1$ and $y\ra x = 1$,
so $a = c$, $a\we d = 0$ and $c\we b = 0$. Besides,
\[
x\we z = (a\we e,b\vee f),
\]
\[
y\we z = (c\we e,d\vee f) = (a\we e,d\vee f),
\]
which implies that
\[
(x\we z) \ra (y\we z) = (1,a\we e\we (d\vee f)).
\]
Since $a\we e\we (d\vee f) = (a\we e\we d) \vee (a\we e \we f)$ and $a = c$ then
$a\we e\we (d\vee f) = 0$, so
\[
(x\we z) \ra (y\we z) = (1,0),
\]
which is (hN8).
In order to see (hN9),
note that
\[
x\vee z = (a\vee e,b\we f),
\]
\[
(c\vee e,d\we f) = (a\vee e,d\we f),
\]
so 
\[
(x\vee z) \ra (y\vee z) = (1,(a\vee e)\we d\we f).
\]
Taking into account that $(a\vee e)\we d\we f = (a \we d \we f) \vee (e\we d\we f)$,
$a = c$, $a\we c = 0$ and $e\we f = 0$ we get 
\[
(a\vee e)\we d\we f = 0. 
\]
Therefore,
\[
(x\vee z) \ra (y\vee z) = (1,0),
\]
which was our aim.
\end{proof}

\begin{proposition} \label{pvi}
Let $T\in \KhIL$.
The following conditions are satisfied for every $x,y\in T$:
\begin{enumerate}[\normalfont 1)]
\item $1\ra x\leq x$,
\item if $x\ra y = 1$ then $x = x\we (\s x \vee y)$,
\item if $x\ra y = 1$ and $\s y\ra \s x = 1$ then $x\leq y$. 
\end{enumerate}
\end{proposition}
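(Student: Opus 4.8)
The plan is to prove the three statements in order, exploiting the axioms (hN1)--(hN10) together with Lemma~\ref{r1} (which, applied to the Kleene reduct's underlying lattice, is available since the $\ra$-free part of Lemma~\ref{r1} is purely lattice-theoretic once we have an analogue of the ``$a\ra b=1\Rightarrow a\le b$'' step). The key preliminary observation I would establish first is that for $x,y\in T$ the relation $x\ra y=1$ and $y\ra x=1$ behaves like an equality for the purposes of the operations: by (hN7)--(hN10) it is a congruence-like relation. In particular, combining (hN1) with the inequality built into the Kleene order one expects $x\ra y = 1 \Rightarrow x\le y$ to fail in general (h-lattices are weak), so the proof of item~1 must use the specific axioms rather than a residuation argument.

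For item~1, $1\ra x\le x$: I would compute $1\we(1\ra x)\le 1\we(\s 1\vee x)$ by (hN2), and note $\s 1 = 0$ in a Kleene algebra (since $\s$ is a De~Morgan involution with $\s 0 = 1$), so $1\we(\s 1\vee x) = 1\we(0\vee x) = x$; since $1\we(1\ra x) = 1\ra x$, this gives $1\ra x\le x$ directly. This is the quickest of the three.

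For item~2, assume $x\ra y=1$. The goal is $x = x\we(\s x\vee y)$. The inequality $x\we(\s x\vee y)\le x$ is automatic. For the reverse, I would use (hN2) in the form $x\we(x\ra y)\le x\we(\s x\vee y)$; since $x\ra y=1$, the left side is $x\we 1 = x$, so $x\le x\we(\s x\vee y)$, and antisymmetry of the lattice order finishes it. The only subtlety is making sure ``$x\we(x\ra y)=x$'' is legitimate: it follows from $x\ra y=1$ and $x\we 1 = x$, which is fine.

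For item~3, assume $x\ra y=1$ and $\s y\ra\s x=1$. By item~2 applied to $x\ra y=1$ we get $x = x\we(\s x\vee y)$; by item~2 applied to $\s y\ra\s x = 1$ we get $\s y = \s y\we(\s\s y\vee\s x) = \s y\we(y\vee\s x)$, using Ne1) ($\s\s y=y$). Taking $\s$ of the second identity and using Ne1), Ne2) (De~Morgan laws), we obtain $y = y\vee(\s y\we x) = y\vee(x\we\s y)$, i.e.\ $x\we\s y\le y$. Meanwhile from $x = x\we(\s x\vee y) = (x\we\s x)\vee(x\we y)$ we get $x\le(x\we\s x)\vee y$. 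Now I would invoke the Kleene axiom Ne3): since $x\we\s x$ is ``small'' in the Kleene sense, $x\we\s x\le y\vee\s y$, hence $x\we\s x\le (y\vee\s y)\we$ anything; more directly, from $x\le(x\we\s x)\vee y$ meet with $\s y$: $x\we\s y\le(x\we\s x\we\s y)\vee(y\we\s y)\le(x\we\s x)\vee(y\we\s y)$, and combined with $x\we\s y\le y$ shown above plus Ne3) forcing $x\we\s x\we\s y\le(y\vee\s y)\we\s y\we x\le\ldots$, one deduces $x\we\s y\le x\we\s x$, whence $x = (x\we\s x)\vee(x\we y)\le y\vee(x\we y) = y$ using that $x\we\s x\le\s x\vee y$? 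The cleanest route, which I would actually write, is: from $x\we\s y\le y$ we get $x\we\s y = x\we\s y\we y\le\s x$ only after using Ne3)-style reasoning, so instead observe $x = x\we(\s x\vee y)$ gives $x\wedge\s y = x\wedge\s y\wedge(\s x\vee y) = (x\wedge\s x\wedge\s y)\vee(x\wedge\s y\wedge y) = x\wedge\s x\wedge\s y\le x\wedge\s x$; symmetrically from the transformed second identity $\s y\le\s x\vee$ nothing new, but now $x\le(x\we\s x)\vee y$ and $x\we\s x\le x\we\s y$ (since $x\we\s x = x\we\s x\we\s y\vee(x\we\s x\we y)$ and Ne3) controls the second disjunct)\dots I will organize this final meet/De~Morgan computation carefully. \textbf{The main obstacle} is exactly this last step of item~3: squeezing $x\le y$ out of the two ``half-implications'' requires the Kleene identity Ne3) (the ``Kleene condition'' $x\we\s x\le y\vee\s y$), and getting the meets and De~Morgan manipulations lined up correctly is where care is needed; everything else is short.
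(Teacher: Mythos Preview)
Your arguments for items 1) and 2) are correct and coincide with the paper's proof.

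For item 3), your setup is right and matches the paper: from item 2) you get $x = x\we(\s x\vee y) = (x\we\s x)\vee(x\we y)$ and $\s y = \s y\we(y\vee\s x)$, and De~Morganizing the latter gives $y = (y\vee\s y)\we(x\vee y)$. But from there your computation wanders: the attempt to show $x\we\s y\le x\we\s x$ via the displayed chain
\[
x\wedge\s y = (x\wedge\s x\wedge\s y)\vee(x\wedge\s y\wedge y) = x\wedge\s x\wedge\s y
\]
is not justified (the second equality would require $x\wedge y\wedge\s y\le x\wedge\s x\wedge\s y$, which you have not established), and the surrounding alternatives are left as loose ends. You correctly flag this as ``the main obstacle,'' but you do not resolve it.

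The clean finish---which is exactly what the paper does---bypasses $x\we\s y$ entirely. Having written $y = (y\vee\s y)\we(x\vee y)$, simply check that $x$ lies below both meet-factors: $x\le x\vee y$ is trivial, and from $x = (x\we\s x)\vee(x\we y)$ together with the Kleene inequality $x\we\s x\le y\vee\s y$ (Ne3)) and $x\we y\le y\le y\vee\s y$ you get $x\le y\vee\s y$. Hence $x\le (y\vee\s y)\we(x\vee y) = y$. That is the missing two-line step; once you have the two decompositions of $x$ and $y$, no further manipulation of $x\we\s y$ is needed.
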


\begin{proof}
First we will show that condition 1) is satisfied. By (hN2), 
\[
(1\ra x)=1\we (1\ra x)\leq 1\we(\s 1 \vee x)=x, i.e.,
\]
$1\ra x \leq x$.

Now we will show that 2) is verified. Suppose that $x\ra y = 1$, so by (hN2)
we get $x\leq (\s x\vee y)$, i.e., 
\[
x = x\we (\s x \vee y). 
\]

Finally we will see 3) is satisfied. Assume that $x\ra y = 1$ and $\s y\ra \s x = 1$.
It follows from 2. of this proposition that
\[
x = x\we (\s x \vee y) = (x\we \s x) \vee (x\we y),
\]
\[
\s y = \s y \we (y \vee \s x) =(\s y\we y)\vee (\s y \we \s x),
\]
so
\[
y = (y\vee \s y) \we (x\vee y).
\]
In particular,
\[
x =(x\we \s x)\vee (x\we y) \leq (y\vee \s y) \vee (x\we y) = y\vee \s y,
\]
so
\[
x = x\we (x\vee y) \leq (y\vee \s y) \we (x\vee y) = y.
\]
Therefore, $x\leq y$.
\end{proof}

\begin{example}\label{example2}
Considering the h-lattice given in Example \ref{example1} 
we have that $\K(A)$ is endowed by a structure of h-Nelson algebra, 
where its universe is the chain of five elements given by $(0,1)<(0,a)<(0,0)<(a,0)<(1,0)$.
\end{example}

Let $T\in \KhIL$. We define the binary relation $\co$ as in (\ref{rel}) of Section \ref{s1}.

\begin{lemma} \label{l1}
Let $T\in \KhIL$. Then $\co$ is an equivalence relation compatible with $\we$, $\vee$ and $\ra$.
\end{lemma}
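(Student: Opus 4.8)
The plan is to verify, in turn, that $\co$ is an equivalence relation and then that it is compatible with each of the three operations $\we$, $\vee$ and $\ra$. Reflexivity of $\co$ is immediate from (hN1): for every $x\in T$ we have $x\ra x=1$, so $x\,\co\,x$. Symmetry is built into the definition of $\co$ given in (\ref{rel}), since the condition ``$x\ra y=1$ and $y\ra x=1$'' is manifestly symmetric in $x$ and $y$. Transitivity is exactly the content of axiom (hN\ref{trans}): if $x\,\co\,y$ and $y\,\co\,z$, then $x\ra y=1$, $y\ra x=1$, $y\ra z=1$ and $z\ra y=1$, and (hN\ref{trans}) yields $x\ra z=1$ and $z\ra x=1$, that is $x\,\co\,z$.

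For compatibility, I would argue operation by operation, in each case reducing the two-sided congruence condition to the corresponding axiom. Suppose $x\,\co\,y$, i.e. $x\ra y=1$ and $y\ra x=1$. For compatibility with $\we$: by (hN\ref{infimo}) applied to the pair $(x,y)$ we get $(x\we z)\ra(y\we z)=1$, and applying (hN\ref{infimo}) again to the pair $(y,x)$ (which also satisfies the hypothesis) we get $(y\we z)\ra(x\we z)=1$; hence $(x\we z)\,\co\,(y\we z)$, and since $\we$ is commutative this also gives $(z\we x)\,\co\,(z\we y)$. Compatibility with $\vee$ is handled identically using (hN\ref{supremo}) in place of (hN\ref{infimo}). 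Compatibility with $\ra$ in the first coordinate follows from the first half of (hN\ref{implic}): applying it to $(x,y)$ gives $(x\ra z)\ra(y\ra z)=1$ and applying it to $(y,x)$ gives $(y\ra z)\ra(x\ra z)=1$, so $(x\ra z)\,\co\,(y\ra z)$; compatibility in the second coordinate follows in the same way from the second half of (hN\ref{implic}), namely $(z\ra x)\ra(z\ra y)=1$ and $(z\ra y)\ra(z\ra x)=1$.

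Finally, to conclude full compatibility of $\co$ with a binary operation $\ast\in\{\we,\vee,\ra\}$ from the above one-variable-at-a-time statements, I would use the standard observation that if $a\,\co\,b$ and $c\,\co\,d$ then $a\ast c\,\co\,b\ast c\,\co\,b\ast d$, using transitivity of $\co$ (already established) together with compatibility in each coordinate separately. This is a routine two-step argument and needs nothing beyond what has been proved.

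The proof is essentially a direct bookkeeping exercise, since the axioms (hN\ref{trans})--(hN\ref{implic}) were evidently chosen precisely to make $\co$ a congruence-like relation on the $\we,\vee,\ra$-reduct; there is no genuine obstacle. If anything, the only point requiring a little care is to remember that each compatibility axiom gives only one of the two implications needed for $x\ast z\,\co\,y\ast z$, so one must invoke the axiom twice (once for the pair $(x,y)$ and once for $(y,x)$), and then assemble the two-coordinate version via transitivity; I would state this assembling step explicitly rather than leave it implicit.
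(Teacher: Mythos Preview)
Your proof is correct and follows exactly the same approach as the paper, which simply remarks that reflexivity and symmetry are immediate, transitivity follows from (hN7), and compatibility with $\we$, $\vee$, $\ra$ follows from (hN8), (hN9), (hN10). Your version is a more explicit unpacking of these citations (including the two-applications-plus-transitivity assembly step), but there is no difference in strategy.
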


\begin{proof}
The reflexivity and symmetry of $\co$ are immediate. The transitivity of the relation
follows from (hN7).
Thus, $\co$ is an equivalence relation.
The facts that $\co$ is compatible with $\we$, $\vee$ and $\ra$ follow from (hN8), (hN9) and (hN10).
\end{proof}

Let $\langle T, \we, \vee, \ra, \s, 0, 1\rangle$ be an h-Nelson algebra.
Then it follows from Lemma \ref{l1} that we can define on $T/\theta$ the operations 
$\we$, $\vee$, $\ra$, $0$ and $1$ as in Section \ref{s1}.
In particular, $\langle T/\co, \we, \vee, 0, 1\rangle$ is a bounded distributive lattice.
We denote by $\preceq$ to the order relation of this lattice.

\begin{proposition}
If $T\in \KhIL$ then $T/\co \in \hIL$.
\end{proposition}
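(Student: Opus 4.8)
The plan is to verify directly that $\langle T/\co,\we,\vee,\ra,0,1\rangle$ is a bounded distributive lattice satisfying the two defining conditions of an h-lattice, namely $\overline{a}\ra\overline{a}=1$ and $\overline{a}\we(\overline{a}\ra\overline{b})\preceq\overline{b}$, where $\overline{x}$ denotes the $\co$-class of $x$. That $\langle T/\co,\we,\vee,0,1\rangle$ is a bounded distributive lattice has already been recorded after Lemma~\ref{l1}, so only the two hemi-implicative conditions remain; everything is made legitimate by Lemma~\ref{l1}, which tells us $\co$ is a congruence for $\we$, $\vee$ and $\ra$, so the quotient operations are well defined and the canonical projection is a surjective homomorphism.

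First I would handle the law $\overline{x}\ra\overline{x}=\overline{1}$. By the definition of the quotient operation, $\overline{x}\ra\overline{x}=\overline{x\ra x}$, and by (hN1) this equals $\overline{1}$, which is the top element $1$ of $T/\co$. Next I would prove $\overline{x}\we(\overline{x}\ra\overline{y})\preceq\overline{y}$. Unwinding the quotient operations, this amounts to showing $\overline{x\we(x\ra y)}\preceq\overline{y}$, i.e. that $(x\we(x\ra y))\ra y=1$ (using that $\preceq$ on $T/\co$ is characterised by: $\overline{u}\preceq\overline{v}$ iff $\overline{u\we v}=\overline{u}$, equivalently, after unwinding, iff $u\ra v = 1$ up to the congruence, which one should pin down precisely). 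The key inputs here are (hN5) and (hN6), which together say $x\we(x\ra y)$ and $x\we y\we(x\ra y)$ are $\co$-equivalent, so $\overline{x\we(x\ra y)}=\overline{x\we y\we(x\ra y)}=\overline{x\we y}\we\overline{x\ra y}\preceq\overline{y}$. Thus the meet with $\overline{y}$ absorbs, giving the desired inequality.

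The one point that needs care — and which I expect to be the main obstacle — is the precise translation between the order $\preceq$ on $T/\co$ and the behaviour of $\ra$. In $T$ itself, $x\ra y=1$ need not be equivalent to $x\le y$ (only the one-way implication from Lemma~\ref{r1}(1) and its quotient-analogue hold for general h-lattices, and indeed in $T/\co$ we only need the h-lattice axioms, not an iff). So I would be careful to argue at the level of the lattice operations $\we,\vee$ on $T/\co$ (which are genuine quotient lattice operations) rather than conflating $\preceq$ with a statement about $\ra$. Concretely: from (hN5) and (hN6), $x\,\co$-equivalent-modulo-nothing aside, one gets $\overline{x\we(x\ra y)}=\overline{x}\we\overline{y}\we\overline{x\ra y}$ as an honest meet in the quotient lattice, and a meet is always below each of its factors, so $\overline{x}\we\overline{y}\we\overline{x\ra y}\preceq\overline{y}$, which is exactly $\overline{x}\we(\overline{x}\ra\overline{y})\preceq\overline{y}$. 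Once this identification via the lattice operations is in place, both h-lattice conditions follow immediately, and we conclude $T/\co\in\hIL$.
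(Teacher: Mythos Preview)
Your proposal is correct and follows essentially the same route as the paper: the paper's proof invokes (hN1) for $x/\theta\ra x/\theta=1/\theta$ and states that the inequality $x/\theta\we(x/\theta\ra y/\theta)\preceq y/\theta$ ``follows from (hN5) and (hN6)'', which is exactly the mechanism you spell out (namely, (hN5) and (hN6) give $\overline{x\we(x\ra y)}=\overline{x\we y\we(x\ra y)}\preceq\overline{y}$). Your digression about whether $\overline{u}\preceq\overline{v}$ can be characterised via $u\ra v=1$ is unnecessary and, as you yourself note, best avoided; the clean lattice-level argument in your final paragraph is all that is needed.
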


\begin{proof}
Let $T\in \KhIL$ and $x,y\in T$. It follows from (hN1) 
that 
\[
x/\theta \ra x/\theta = 1/\theta. 
\]
The inequality 
\[
x/\theta \we (x/\theta \ra y/\theta) \preceq y/\theta
\] 
follows from (hN5) and (hN6).
Therefore, $T/\co \in \hIL$.
\end{proof}

\begin{theorem}\label{rept}
Let $T\in \KhIL$. Then the map $\rho_T:T\ra \K(T/\co)$ given by $\rho_{T}(x) = (x/\co,\s x/\co)$ is a monomorphism.
\end{theorem}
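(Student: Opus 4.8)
The plan is to verify, in turn, that $\rho_T$ is well-defined into $\K(T/\co)$, that it is a homomorphism for each operation, and finally that it is injective. Well-definedness is the first point: for $x\in T$ we need $(x/\co)\we(\s x/\co)=0/\co$ in $T/\co$, equivalently $(x\we\s x)\,\co\,0$. Since $T$ is a Kleene algebra, $x\we\s x$ and $0$ are comparable in a strong sense — indeed by Ne3) we have $(x\we\s x)\we(y\vee\s y)=x\we\s x$ for all $y$, and one should use (hN1)--(hN4) together with the Kleene identities to produce the two arrows $(x\we\s x)\ra 0=1$ and $0\ra(x\we\s x)=1$. (Here I expect the lattice congruence properties of $\co$ and Lemma~\ref{r1} applied in $T/\co$ to do most of the work, reducing the claim to facts about $T/\co\in\hIL$.)

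Next I would check that $\rho_T$ preserves the operations. For $\we$, $\vee$ and $\s$ this is essentially immediate from the definitions of the operations on $\K(T/\co)$ and the fact (Lemma~\ref{l1}) that $\co$ is compatible with $\we$ and $\vee$: for instance $\rho_T(x\we y)=((x\we y)/\co,\s(x\we y)/\co)=(x/\co\we y/\co,\s x/\co\vee\s y/\co)=\rho_T(x)\we\rho_T(y)$, using Ne2). Preservation of the constants $0,1$ is trivial. The interesting case is the implication: we must show
\[
\rho_T(x\ra y)=\bigl((x\ra y)/\co,\ \s(x\ra y)/\co\bigr)
\]
equals
\[
\rho_T(x)\Ra\rho_T(y)=\bigl(x/\co\ra y/\co,\ x/\co\we\s y/\co\bigr)=\bigl((x\ra y)/\co,\ (x\we\s y)/\co\bigr).
\]
The first coordinates agree by the definition of $\ra$ on $T/\co$. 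For the second coordinates I need $\s(x\ra y)\,\co\,(x\we\s y)$, which is exactly the content of (hN3) and (hN4): these say $\s(x\ra y)\ra(x\we\s y)=1$ and $(x\we\s y)\ra\s(x\ra y)=1$, hence $\s(x\ra y)$ and $x\we\s y$ are $\co$-related by definition of $\co$. So this step, though it looks like the crux, dissolves once the right axioms are invoked.

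Injectivity is where the real argument lies. Suppose $\rho_T(x)=\rho_T(y)$, so $x\,\co\,y$ and $\s x\,\co\,\s y$; unwinding, $x\ra y=1$, $y\ra x=1$, $\s x\ra\s y=1$ and $\s y\ra\s x=1$. I want $x=y$. Applying part 3) of Proposition~\ref{pvi} with the hypotheses $x\ra y=1$ and $\s y\ra\s x=1$ gives $x\le y$; applying it again with $y\ra x=1$ and $\s x\ra\s y=1$ gives $y\le x$; hence $x=y$. This is the main obstacle in the sense that it is the one place where we genuinely need the Kleene structure interacting with the hemi-implication (via Proposition~\ref{pvi}(3), whose proof in turn leaned on Ne1)--Ne3) and (hN2)); everything else is bookkeeping. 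Assembling these pieces — well-defined, a homomorphism on all six operations, and injective — completes the proof that $\rho_T$ is a monomorphism.
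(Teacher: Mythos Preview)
Your argument is essentially the paper's own: well-definedness, preservation of each operation (with $\ra$ handled via (hN3)--(hN4)), and injectivity via Proposition~\ref{pvi}(3). The one step you leave vague is well-definedness, and your parenthetical suggestion---reducing to Lemma~\ref{r1} in $T/\co$---does not help: the two equalities $(x\we\s x)\ra 0=1$ and $0\ra(x\we\s x)=1$ must be established in $T$ itself, and facts about $T/\co$ give you nothing until you already know the relevant classes coincide. The concrete trick (which your phrase ``use (hN1)--(hN4) together with the Kleene identities'' correctly anticipates, and which the paper spells out) is to rewrite $0=\s 1=\s(x\ra x)$ using (hN1); then $(x\we\s x)\ra 0=(x\we\s x)\ra\s(x\ra x)=1$ is exactly (hN4) with $y=x$, and $0\ra(x\we\s x)=\s(x\ra x)\ra(x\we\s x)=1$ is exactly (hN3) with $y=x$. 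With that filled in, your proof matches the paper's.
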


\begin{proof}
We write $\rho$ instead of $\rho_T$.
First we will show that $\rho$ is a well defined map. Let $x\in T$.
Note that $x/\co \we \s x/\co = 0/\co$ if and only if $(x\we \s x) \ra 0 = 1$
and $0\ra (x\we \s x) = 1$. First note that it follows from (hN1) and (hN4)
that
\[
(x\we \s x) \ra 0 = (x\we \s x) \ra \s (x\ra x) = 1.
\]
Besides, it follows from (hN1) and (hN3)
that
\[
0\ra (x\we \s x) = \s (x\ra x) \ra (x\we \s x) = 1.
\]
Thus, $\rho$ is a well defined map.

Straightforward computations show that $\rho$ preserves $\we$, $\vee$, $\s$, $0$ and $1$.
The fact that $\rho$ preserves $\ra$ follows from (hN3) and (hN4).
Hence, $\rho$ is a homomorphism. 
Finally, in order to show that $\rho$ is an injective map let $x,y \in T$ such
that $\rho(x) = \rho(y)$, i.e., $x\ra y= 1$, $y\ra x = 1$, $\s x \ra \s y = 1$ and $\s y \ra \s x = 1$.
Thus, it follows from Lemma \ref{pvi} that $x = y$. Hence, $\rho$ is an injective map.
Therefore, $\rho$ is a monomorphism.
\end{proof}

It is natural to ask whether the quasivariety 
$\KhIL$ is in fact a variety. The answer is positive, and to prove it we first establish the following results.

\begin{proposition} \label{eq neg}
    Let $T\in \KhIL$. Then, the following condition is satisfied for all $x,y,z\in T$:
    \begin{equation}\label{eq n}
        (x\ra y)\vee (z\we\s z)=(x\vee (z\we \s z))\ra (y\vee (z\we \s z)).
    \end{equation}
\end{proposition}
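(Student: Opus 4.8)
The plan is to prove equation~(\ref{eq n}) by exploiting the representation theorem (Theorem~\ref{rept}): since $\rho_T\colon T\to \K(T/\co)$ is an injective homomorphism, it suffices to verify that the identity holds in $\K(A)$ for every $A\in\hIL$, because both sides of~(\ref{eq n}) are terms in the language of h-Nelson algebras and $\rho_T$ preserves all the operations involved. Concretely, I would first observe that in any $\K(A)$ the element $z\we\s z$ is of the form $(e\we f,\,e\vee f)$ with $e\we f=0$, so $z\we\s z=(0,\,e\vee f)$; write $p:=e\vee f$. Thus the right-hand side of~(\ref{eq n}) becomes, with $x=(a,b)$ and $y=(c,d)$,
\[
\bigl((a,b)\vee(0,p)\bigr)\Ra\bigl((c,d)\vee(0,p)\bigr)=(a,\,b\we p)\Ra(c,\,d\we p)=\bigl(a\ra c,\;a\we d\we p\bigr),
\]
while the left-hand side is
\[
\bigl((a,b)\Ra(c,d)\bigr)\vee(0,p)=(a\ra c,\;a\we d)\vee(0,p)=\bigl(a\ra c,\;a\we d\we p\bigr).
\]
These two coincide, so the identity holds in $\K(A)$, hence in $T/\co$'s Kalman algebra, hence in $T$ via $\rho_T$.

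An alternative, more self-contained route avoids the twist representation and works directly in $T$. For this I would use Proposition~\ref{pvi} together with the Kleene-algebra equations Ne1)--Ne3) to show that both sides of~(\ref{eq n}) are $\co$-related and that their negations are $\co$-related, and then invoke part~3) of Proposition~\ref{pvi} (or Lemma~\ref{r1}) to upgrade this to genuine equality. The key computations are: $(x\ra y)\vee(z\we\s z)$ and $(x\vee(z\we\s z))\ra(y\vee(z\we\s z))$ become equal modulo $\co$ because $\co$ is compatible with $\vee$ and $\ra$ (Lemma~\ref{l1}) and because $z\we\s z\mathrel{\co}0$ in the relevant quotient (this uses that $T/\co\in\hIL$ has $0$ as least element together with Ne3), which forces $z\we\s z$ to collapse). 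Simultaneously one checks the same for the images under $\s$, using (hN3), (hN4) and De Morgan (Ne1, Ne2).

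I expect the cleanest presentation to be the first one, so the main steps in order are: (i) recall that $\rho_T$ is an injective homomorphism preserving $\we,\vee,\ra,\s,0,1$; (ii) reduce the identity to a check in $\K(A)$; (iii) compute both sides in $\K(A)$ using only the explicit formulas for $\vee$, $\Ra$ and $\s$ on pairs, together with the fact that $z\we\s z$ has first coordinate $0$; (iv) observe the two results agree. The main obstacle — really a bookkeeping point rather than a genuine difficulty — is step~(iii): one must be careful that the first coordinates on both sides are literally $a\ra c$ (which is immediate, since adjoining $(0,p)$ does not change first coordinates of $x$ or $y$, and $a\ra(c)$ is unchanged), and that the second coordinates simplify identically using distributivity of the lattice of $A$; the fact that $(a\ra c)\we(a\we d\we p)$ and $a\we d\we p$ play no role in the comparison since second coordinates are taken as computed. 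No deep idea is needed beyond transporting the identity along $\rho_T$.
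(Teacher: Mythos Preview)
Your main approach is correct and essentially identical to the paper's own proof: verify the identity in $\K(A)$ by direct computation on pairs and then transfer it to an arbitrary $T\in\KhIL$ via the injective homomorphism $\rho_T$ from Theorem~\ref{rept}. Your computations of both coordinates match the paper's, with the minor cosmetic difference that you first simplify $z\we\s z$ to $(0,p)$ before substituting.
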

\begin{proof}
   Let $A\in \hIL$. We show that equation~\eqref{eq n} holds in $\K(A)$. 
Take $x=(a,b)$, $y=(c,d)$ and $z=(e,f)$ in $\K(A)$. Then
\[
(x\ra y)\vee (z\we \s z)
  = (a\ra c,\,(a\we d)\we (e\vee f)),
\]
and
\[
(x\vee (z\we \s z))\ra (y\vee (z\we \s z))
  = (a,\, b\we (e\vee f))\ra (c,\, d\we (e\vee f)).
\]
It is straightforward to verify that
\[
(a,\, b\we (e\vee f))\ra (c,\, d\we (e\vee f))
   = (a\ra c,\,(a\we d)\we (e\vee f)).
\]
Thus equation~\eqref{eq n} holds in $\K(A)$.

Finally, by Theorem~\ref{rept}, every $T\in \KhIL$ is isomorphic to a subalgebra of 
$\K(A)$ for some $A\in \hIL$, and hence equation~\eqref{eq n} holds in $T$ as well.
\end{proof}

\begin{lemma}\label{kleene cond}
    Let $T$ be a Kleene algebra. Then,
    \[1=x\vee (z\we\s z) \text{ if and only if }x=1.\]
\end{lemma}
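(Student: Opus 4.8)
The plan is to prove only the nontrivial implication: assuming $1 = x \vee (z \wedge \sim z)$, I will derive $x = 1$; the converse is immediate since $1$ is the top element. Throughout I would use the defining equations Ne1)--Ne3) of a Kleene algebra together with two standard consequences: that $\sim$ is an order-reversing bijection, so $\sim 1 = 0$ and $\sim 0 = 1$, and that the dual De Morgan law $\sim(u \vee v) = \sim u \wedge \sim v$ holds (it follows at once from Ne1) and Ne2)).

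First I would extract two identities from the hypothesis. Applying $\sim$ to $1 = x \vee (z \wedge \sim z)$ and using the dual De Morgan law together with Ne2) and Ne1) gives
\[
0 = \sim x \wedge \sim(z \wedge \sim z) = \sim x \wedge (\sim z \vee z).
\]
Meeting the hypothesis with $\sim x$ and distributing gives
\[
\sim x = (\sim x \wedge x) \vee (\sim x \wedge z \wedge \sim z).
\]
The key point is the ``Kleene inequality'' $u \wedge \sim u \leq v \vee \sim v$, valid for all $u,v$, which is nothing but equation Ne3) rewritten (recall $a \wedge b = a$ iff $a \leq b$). Applying it with $u = x$, $v = z$ yields $x \wedge \sim x \leq z \vee \sim z$, and trivially $\sim x \wedge z \wedge \sim z \leq z \vee \sim z$ as well; hence both joinands in the last display lie below $z \vee \sim z$, so $\sim x \leq z \vee \sim z$.

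Combining the two identities, $\sim x = \sim x \wedge (z \vee \sim z) = 0$, and therefore $x = \sim \sim x = \sim 0 = 1$, as required. I do not anticipate a genuine obstacle: the argument is short, and the only thing needing care is the observation that Ne3) is exactly the order comparison $u \wedge \sim u \leq v \vee \sim v$, plus the routine bookkeeping that $\sim$ interchanges $0$ and $1$ and satisfies both De Morgan laws.
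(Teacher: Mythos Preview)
Your argument is correct. The two derivations share the same ingredients---De Morgan duality, the involution, and the Kleene inequality encoded in Ne3)---but deploy them along dual paths. The paper first weakens the hypothesis to $1 = x \vee \sim z$, dualizes to get $\sim x \wedge z = 0$, then instantiates Ne3) as $z\wedge\sim z \leq x\vee\sim x$ to obtain $x\vee\sim x = 1$; from these it deduces $z\leq x$ and hence $x = x\vee z = 1$. You instead dualize the full hypothesis to $\sim x \wedge (z\vee\sim z) = 0$, and use Ne3) in the opposite direction, $x\wedge\sim x \leq z\vee\sim z$, together with the decomposition $\sim x = (\sim x\wedge x)\vee(\sim x\wedge z\wedge\sim z)$ to get $\sim x \leq z\vee\sim z$, whence $\sim x = 0$. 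Your route is marginally more direct since it avoids the intermediate fact $z\leq x$, while the paper's version yields that auxiliary inequality along the way; both are equally elementary and of comparable length.
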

\begin{proof}
Suppose that $1 = x \vee (z \we \s z)$. It follows that $1\leq x \vee \s z$, and by De Morgan’s laws we obtain
    $\s x \we z = 0$.  
    Since $z \we \s z \leq x \vee \s x$, we have $x \vee (z \we \s z) \leq x \vee \s x$, which implies $x \vee \s x = 1$.  
    Therefore,
    \[
    z = z \we (x \vee \s x)
      = (z \we x) \vee (z \we \s x)
      = (z \we x) \vee 0
      = z \we x.
    \]
    Hence $z \leq x$, and consequently $1 = x \vee z = x$. 
    The converse implication is immediate and the result follows.
\end{proof}

In \cite{J} and \cite{Vig}, for a Kleene algebra $T$ and its set of negative elements $T^-=\{\,x\in T : x\leq \s x\,\}$, the following relation is introduced:
\begin{equation}\label{rel1}
x\,\theta_{T^-}\,y \quad\text{if and only if}\quad \exists\, n\in T^- \;\text{such that}\; x\vee n = y\vee n.
\end{equation}
Since $T^-$ is an ideal of $T$, the relation $\theta_{T^-}$ is a lattice congruence. Moreover, the quotient lattice $T/\theta_{T^-}$ is distributive, and it provides 
the underlying distributive structure employed in the twist-product 
representation of Kleene algebras.  
The motivation behind equation \eqref{eq n} is that 
it ensures the compatibility of $\theta_{T^-}$ with the operation~$\ra$. Under this assumption, it will be shown that $\theta=\theta_{T^-}$. 
Note that $T^-=\{\,x\wedge \s x : x\in T\,\}$, a description that will be used in the sequel.

\begin{proposition} \label{equivalence eq}
    If $T$ be a Kleene algebra satisfying Conditions from (hN1) to (hN6) and equation (\ref{eq n}), the following conditions are equivalent:
    \begin{enumerate}
        \item $x\ra y=1$ and $y\ra x=1$;
        \item There exists $z\in T$ such that $x\vee (z\we\s z)=y\vee (z\we\s z)$.
    \end{enumerate}
\end{proposition}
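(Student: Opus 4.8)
The plan is to prove the two implications separately, exploiting the representation theorem (Theorem~\ref{rept}) together with Lemma~\ref{kleene cond} and Proposition~\ref{eq neg}. For the direction $(1)\Rightarrow(2)$, I would argue that it suffices to take $z$ so that $z\we\s z$ witnesses the congruence; the natural candidate is to build $z$ out of $x$ and $y$ themselves. Concretely, assuming $x\ra y=1$ and $y\ra x=1$, I would first apply equation~\eqref{eq n} with a suitable choice of $z$ (for instance $z$ with $z\we\s z$ equal to, or below, $x\we\s x$ joined with $y\we\s y$, using the description $T^-=\{x\we\s x:x\in T\}$ and the fact that $T^-$ is an ideal so such a $z$ exists): then $(x\ra y)\vee(z\we\s z)=(x\vee(z\we\s z))\ra(y\vee(z\we\s z))$. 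Since $x\ra y=1$, the left-hand side is $1$, hence $(x\vee(z\we\s z))\ra(y\vee(z\we\s z))=1$, and symmetrically with $x,y$ swapped. Now $x\vee(z\we\s z)$ and $y\vee(z\we\s z)$ differ by an element of $T^-$ in the sense that adding the negative element $z\we\s z$ to each, one should get the same element — this is exactly where the choice of $z$ must be engineered so that $x\we\s x\leq z\we\s z$ and $y\we\s y\leq z\we\s z$, forcing $x\vee(z\we\s z)=y\vee(z\we\s z)$ once we also know the two implications above give equality in the quotient. I would then use Proposition~\ref{pvi}(3): from $(x\vee(z\we\s z))\ra(y\vee(z\we\s z))=1$ and the De Morgan dual for the negations (the negations of these elements are $\s x\we(\s z\vee z)$-type expressions which collapse appropriately), conclude $x\vee(z\we\s z)=y\vee(z\we\s z)$, which is statement~(2).

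For the direction $(2)\Rightarrow(1)$, suppose $x\vee(z\we\s z)=y\vee(z\we\s z)$ for some $z$. Applying equation~\eqref{eq n} once with the pair $(x,y)$ and once with $(y,x)$ gives
\[
(x\ra y)\vee(z\we\s z)=(x\vee(z\we\s z))\ra(y\vee(z\we\s z))=(y\vee(z\we\s z))\ra(y\vee(z\we\s z))=1,
\]
where the last equality is (hN1), and symmetrically $(y\ra x)\vee(z\we\s z)=1$. Then Lemma~\ref{kleene cond} applied to the Kleene algebra $T$ yields $x\ra y=1$ and $y\ra x=1$, which is~(1). This direction is clean and essentially immediate.

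The main obstacle is the forward direction $(1)\Rightarrow(2)$, specifically producing the witness $z$ and proving the equality $x\vee(z\we\s z)=y\vee(z\we\s z)$ rather than merely mutual $\ra$-implication between these joins. The key insight to push through is that in a Kleene algebra, once $u\ra v=1$ and $v\ra u=1$ and moreover $u$ and $v$ are both \emph{above} a common element of $T^-$ large enough to absorb $u\we\s u$ and $v\we\s v$, the mutual implications do upgrade to genuine equality via Proposition~\ref{pvi}(3): one checks $\s v\ra\s u=1$ reduces, after the absorption, to $v\ra u=1$. I expect the bulk of the work is verifying in $\K(A)$ (then transporting along $\rho_T$) that the negation $\s(u\vee(z\we\s z))$ simplifies enough that the hypothesis $\s(y\vee\cdots)\ra\s(x\vee\cdots)=1$ is available for free from De Morgan plus the ideal property of $T^-$; I would do this computation componentwise in $\K(A)$ exactly as in the proof of Proposition~\ref{eq neg}, and then invoke Theorem~\ref{rept} to descend to arbitrary $T\in\KhIL$.
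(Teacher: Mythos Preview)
Your argument for $(2)\Rightarrow(1)$ is correct and is exactly what the paper does: apply equation~\eqref{eq n} together with (hN1) to get $(x\ra y)\vee(z\we\s z)=1$, then invoke Lemma~\ref{kleene cond}.

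The forward direction $(1)\Rightarrow(2)$, however, has a genuine gap. You propose to work in $\K(A)$ and transport the result along $\rho_T$ via Theorem~\ref{rept}. But Theorem~\ref{rept} is stated and proved for $T\in\KhIL$, i.e.\ for algebras satisfying \emph{all} of (hN1)--(hN10). The hypothesis of the present proposition gives you only a Kleene algebra with (hN1)--(hN6) and equation~\eqref{eq n}; conditions (hN7)--(hN10) are \emph{not} assumed. In fact the whole purpose of this proposition in the paper is to serve as the key step in showing that (hN1)--(hN6) together with~\eqref{eq n} \emph{imply} (hN7)--(hN10), thereby establishing that $\KhIL$ is a variety. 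Invoking the representation theorem here would be circular. The same objection applies to your plan of verifying the negation condition $\s(y\vee(z\we\s z))\ra\s(x\vee(z\we\s z))=1$ componentwise in $\K(A)$: you simply do not have access to $\K(A)$ at this stage.

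The paper bypasses this entirely with a direct, elementary computation. One takes $z=(x\we\s x)\vee(y\we\s y)$, so that $z\in T^{-}$ and $z\we\s z=z$. From $x\ra y=1$ and item~2) of Proposition~\ref{pvi} (whose proof uses only (hN2), hence is available here) one gets $x=x\we(\s x\vee y)$, and symmetrically $y=y\we(\s y\vee x)$. A short distributive-lattice calculation then shows $x\vee z=(x\vee y)\we(x\vee\s y)=x\vee y$ (using $y\leq x\vee\s y$), and likewise $y\vee z=x\vee y$, giving the desired equality outright. No representation, no appeal to Proposition~\ref{pvi}(3), and no negation condition is needed.
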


\begin{proof}
  Suppose that $x \ra y = 1$ and $y \ra x = 1$.  
From Proposition~\ref{pvi} we obtain $x = x \we (\s x \vee y)$ and
$y = y \we (\s y \vee x)$.
Let $z = (x \we \s x) \vee (y \we \s y)$.  
Then $z \leq \s z$ and $z \we \s z = z$.  
Hence,
\[ x \vee z
   = x \vee (x \we \s x) \vee (y \we \s y)
   = x \vee (y \we \s y)
   = (x \vee y) \we (x \vee \s y).
\]
Since $y \leq \s y \vee x$, we have
$x \vee y \leq \s y \vee x$,
and therefore $x \vee (z \we \s z) = x \vee y$.
By the same reasoning, $y \vee (z \we \s z) = x \vee y$,
and thus, $y \vee (z \we \s z) = x \vee (z \we \s z)$.

Now, suppose that there exists $z\in T$ such that $x\vee (z\we\s z)=y\vee (z\we\s z)$. By (hN1) and (\ref{eq n}), $1=x\vee (z\we\s z)\ra y\vee (z\we\s z)=(x\ra y)\vee (z\we\s z)$. By Lemma \ref{kleene cond}, $x\ra y=1$. By the same reasoning we obtain $y\ra x=1$.
\end{proof}

\begin{theorem}
    The class $\KhIL$ is a variety.
\end{theorem}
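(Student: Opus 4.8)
The plan is to pin down an explicit equational base for $\KhIL$. Let $\mathcal{V}$ be the variety axiomatized by the Kleene-algebra identities Ne1)--Ne3) together with the equations (hN1)--(hN6) and equation~\eqref{eq n}. I claim $\KhIL=\mathcal{V}$, which settles the theorem. The inclusion $\KhIL\subseteq\mathcal{V}$ is immediate: every h-Nelson algebra satisfies (hN1)--(hN6) by definition and satisfies equation~\eqref{eq n} by Proposition~\ref{eq neg}. So all the work is in the reverse inclusion.

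So take $T\in\mathcal{V}$; I must verify that $T$ satisfies the quasi-equations (hN7)--(hN10). The crucial tool is Proposition~\ref{equivalence eq}: since $T$ is a Kleene algebra satisfying (hN1)--(hN6) and~\eqref{eq n}, the relation $\theta$ from~\eqref{rel} coincides with the relation $\theta_{T^-}$ from~\eqref{rel1}, i.e.\ $x\ra y=1$ and $y\ra x=1$ if and only if $x\vee n=y\vee n$ for some $n\in T^-$. Since $T^-$ is an ideal of the Kleene algebra $T$, the relation $\theta_{T^-}$ is a lattice congruence; in particular it is an equivalence relation compatible with $\we$ and $\vee$. From this, (hN7) is precisely the transitivity of $\theta_{T^-}$, while (hN8) and (hN9) follow from compatibility of $\theta_{T^-}$ with $\we$ and with $\vee$ respectively, using that $u\,\theta_{T^-}\,v$ entails (among other things) $u\ra v=1$.

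It remains to check (hN10), i.e.\ that $\theta_{T^-}$ is compatible with $\ra$, and this is the only genuinely delicate point; everything else is bookkeeping once the identification $\theta=\theta_{T^-}$ is in hand. Assume $x\ra y=1$ and $y\ra x=1$, so by Proposition~\ref{equivalence eq} there is $n\in T^-$ with $x\vee n=y\vee n$; write $n=p\we\s p$ (possible since $T^-=\{\,z\we\s z:z\in T\,\}$). Then for an arbitrary $w\in T$, two applications of equation~\eqref{eq n} together with $x\vee n=y\vee n$ give
\[
(x\ra w)\vee n=(x\vee n)\ra(w\vee n)=(y\vee n)\ra(w\vee n)=(y\ra w)\vee n,
\]
and, by the same reasoning, $(w\ra x)\vee n=(w\vee n)\ra(x\vee n)=(w\vee n)\ra(y\vee n)=(w\ra y)\vee n$. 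Hence $(x\ra w)\,\theta_{T^-}\,(y\ra w)$ and $(w\ra x)\,\theta_{T^-}\,(w\ra y)$, so in particular $(x\ra w)\ra(y\ra w)=1$ and $(w\ra x)\ra(w\ra y)=1$; renaming $w$ as the variable $z$ of (hN10) yields that quasi-equation. Therefore $T$ satisfies (hN1)--(hN10), so $T\in\KhIL$, and we conclude $\KhIL=\mathcal{V}$ is a variety. I expect that correctly reading equation~\eqref{eq n} as the statement ``$\theta_{T^-}$ respects $\ra$'' will be the main obstacle, since it is the one place where the precise shape of~\eqref{eq n} is used; the remaining steps are routine consequences of $\theta_{T^-}$ being a lattice congruence, which is already available.
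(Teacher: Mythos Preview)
Your proof is correct and follows essentially the same route as the paper: you identify the variety $\mathcal{V}$ axiomatized by the Kleene identities, (hN1)--(hN6) and equation~\eqref{eq n}, use Proposition~\ref{eq neg} for one inclusion, and for the other invoke Proposition~\ref{equivalence eq} to translate (hN7)--(hN9) into properties of the lattice congruence $\theta_{T^-}$, then apply equation~\eqref{eq n} directly to get compatibility with $\ra$ for (hN10). The only cosmetic difference is that the paper phrases the (hN10) argument directly in terms of equation~\eqref{eq n} and Proposition~\ref{equivalence eq} rather than speaking of $\theta_{T^-}$, but the computations are identical.
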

\begin{proof}
We will prove that $\KhIL$ is precisely the class of algebras defined by the equations of Kleene algebras together with conditions (hN1)–(hN6) and equation (\ref{eq n}). 

On the one hand, from Proposition~\ref{eq neg} it follows that every algebra in 
$\KhIL$ satisfies equation (\ref{eq n}).

On the other hand, assume that $T$ satisfies conditions~(hN1)--(hN6) and equation (\ref{eq n}). From Proposition \ref{equivalence eq} it follows immediately that $T$ satisfies conditions (hN8), (hN9) and (hN7), since $\theta_{T^-}$ is a lattice congruence of $T$.

To prove condition (hN10), suppose that $x\ra y=y\ra x=1$. By Proposition \ref{equivalence eq}, there exists $w\in T$ such that $x\vee (w\we\s w)=y\vee (w\we\s w)$. Then, 
\[z\vee (w\we\s w)\ra x\vee (w\we\s w)=z\vee (w\we\s w)\ra y\vee (w\we\s w)\]
and by equation (\ref{eq n}), it follows that
\[(z\ra x)\vee (w\we\s w)=(z\ra y)\vee (w\we\s w).\]
Again, by Proposition \ref{equivalence eq}, $(z\ra x)\ra (z\ra y)=1$ and $(z\ra y)\ra (z\ra x)=1$. Analogously, it is proved that $(y\ra z)\ra (x\ra z)=1$ and $(x\ra z)\ra (y\ra z)=1$. Therefore, $T\in \KhIL$ and the result follows.
\end{proof}

\begin{remark}
    Equation (\ref{eq n}) is independent from Conditions (hN1)-(hN6). To see this, let us consider the chain $T=\{0,a,1\}$ where $0<a<1$ and the operation $\ra$ is defined by
	\begin{center}
	\begin{tabular}{|c|c|c|c|}
		\hline
		$\ra$& $0$ & $a$ & $1$ \\
		\hline
		$0$ & $1$ & $1$ & $1$ \\
		\hline
		$a$ & $1$ & $1$ & $a$ \\
		\hline
		$1$ & $0$ & $a$ & $1$ \\
		\hline
	\end{tabular}
	\end{center}
    and $\s 0=1$, $\s 1=0$ and $\s a=a$. $T$ is a Kleene algebra that satisfies Conditions (hN1)-(hN6) but
    \[(0\ra 1)\vee a=1\vee a=1\neq (0\vee a\ra 1\vee a)=(a\ra 1)=a.\]
\end{remark}

\section{Congruences of h-Nelson algebras}\label{s3}

Let $T\in \KhIL$ and $F\subseteq T$. 
As usual, we say that $F$ is a \textit{filter} of $T$ if $1\in F$, $F$ is an upset (i.e.,
for every $x, y \in T$, if $x\leq y$ and $x\in F$ then $y\in F$) and $x\we y \in F$, for all $x, y\in F$.
We also say that $F$ is an \textit{implicative filter} of $T$
if $1\in F$ and for every $x,y \in F$, if $x\in F$ and $x\ra y \in F$ then $y\in F$.

\begin{definition}\label{gf}
Let $T\in \KhIL$ and $F\subseteq T$. 
We say that $F$ is an \textit{h-implicative filter} of $T$ if it is an implicative filter of $T$ which
satisfies the following additional conditions for every $x,y\in T$ and $f\in F$:
\begin{enumerate}[\normalfont F1)]
\item $(x\ra y)\ra ((x\we f)\ra (y\we f))\in F$,
\item $((x\we f)\ra (y\we f)) \ra (x\ra y)\in F$,
\item $\s (x\ra y)\ra \s ((x\we f)\ra (y\we f))\in F$,
\item $\s ((x\we f)\ra (y\we f)) \ra \s (x\ra y)\in F$.
\end{enumerate}
\end{definition}

In this section we prove that for every $T\in \KhIL$ there exists an order isomorphism
between the lattice of congruences of $T$ and the lattice of h-implicative filters of $T$.
In order to make it possible, we proceed to give some definitions and technical results.

Let $T\in \KhIL$. 
Given $\theta$ an equivalence relation on $T$ and $x\in T$,
we write $x/\theta$ for the equivalence class of $x$ associated to the congruence $\theta$.
We also write $T/\theta$ for the set $\{x/\theta:x\in T\}$.
Moreover, we write $\Con(T)$ to indicate the set of congruences of $T$.
Finally, we write $\gIF(T)$ for the set of h-implicative filters of $T$. 

\begin{remark}\label{remark18}
Let $T\in \KhIL$ and $F\in \gIF(T)$. Then $F$ is an upset.
Indeed, let $x, y\in T$ such that $x\leq y$ and $x\in F$.
It follows from F2) that $((x\we x)\ra (y\we x)) \ra (x\ra y)\in F$,
i.e., $1\ra (x\ra y) \in F$. Since $1\in F$ then $x\ra y \in F$.
Taking into account that $x\in F$ we get $y\in F$. Therefore, $F$ is an upset.
\end{remark}

\begin{lemma} \label{lc2}
Let $T\in \KhIL$ and $\theta \in \Con(T)$. Then $1/\theta \in \gIF(T)$.
\end{lemma}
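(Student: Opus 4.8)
The plan is to show that $1/\theta$, the congruence class of $1$ under an arbitrary $\theta \in \Con(T)$, is an h-implicative filter. First I would verify that $1/\theta$ is an implicative filter. Trivially $1 \in 1/\theta$. For the implicative condition, suppose $x \in 1/\theta$ and $x \ra y \in 1/\theta$, i.e.\ $x\,\theta\,1$ and $(x\ra y)\,\theta\,1$. Since $\theta$ is compatible with $\ra$, from $x\,\theta\,1$ we get $(x\ra y)\,\theta\,(1 \ra y)$, hence $(1\ra y)\,\theta\,1$. By Proposition~\ref{pvi}(1) we have $1\ra y \leq y$, and since $\theta$ is a lattice congruence, upsets of elements equivalent to $1$ are again equivalent to $1$: more precisely $y \,\theta\, (y \vee (1\ra y)) = y$ is vacuous, so instead argue that $1\ra y \leq y \leq 1$ together with $(1\ra y)\,\theta\,1$ forces $y\,\theta\,1$ because $y \we 1 = y$ and $y \we (1\ra y) = 1\ra y$, so $y = y\we 1 \,\theta\, y \we (1\ra y) = 1\ra y \,\theta\, 1$. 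Thus $y \in 1/\theta$, and $1/\theta$ is an implicative filter.

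Next I would check conditions F1)--F4) of Definition~\ref{gf}. Fix $x,y \in T$ and $f \in 1/\theta$, so $f\,\theta\,1$. For F1), since $\theta$ is compatible with $\we$, we have $(x\we f)\,\theta\,(x\we 1) = x$ and $(y\we f)\,\theta\,y$; then compatibility with $\ra$ gives $((x\we f)\ra(y\we f))\,\theta\,(x\ra y)$. Applying compatibility with $\ra$ once more to the pair $((x\we f)\ra(y\we f)) \,\theta\, (x\ra y)$ in the second coordinate, together with reflexivity $(x\ra y)\,\theta\,(x\ra y)$ in the first, yields $\big((x\ra y)\ra((x\we f)\ra(y\we f))\big)\,\theta\,\big((x\ra y)\ra(x\ra y)\big) = 1$ by (hN1). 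Hence $(x\ra y)\ra((x\we f)\ra(y\we f)) \in 1/\theta$, which is F1). Condition F2) is entirely symmetric, swapping the roles of the two sides of the $\theta$-equivalence. For F3) and F4) the same argument applies after noting that $\theta$, being a congruence of the full algebra $T$, is also compatible with $\s$; so from $((x\we f)\ra(y\we f))\,\theta\,(x\ra y)$ we get $\s((x\we f)\ra(y\we f))\,\theta\,\s(x\ra y)$, and then the same ``apply $\ra$-compatibility, compare with $z\ra z = 1$'' trick produces F3) and F4).

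I do not expect a serious obstacle here; the only mild subtlety is that Definition~\ref{gf} and the notion of h-implicative filter are stated using the term operations $\we,\vee,\ra,\s$ and the constant $1$, so one must be careful to invoke compatibility of $\theta$ with \emph{each} of these operations (including $\s$) rather than only with $\we,\vee,\ra$ as in Lemma~\ref{l1}; but this is automatic because $\theta$ is a congruence of the whole algebra $T$, not merely of its $\{\we,\vee,\ra\}$-reduct. The recurring mechanical step, used in all of F1)--F4) and in the implicative-filter verification, is the observation that if $a\,\theta\,b$ then $(c\ra a)\,\theta\,(c\ra b)$ and in particular, taking $c = a$, one compares against $a\ra a = 1$ to land inside $1/\theta$; once this pattern is isolated the four filter conditions follow uniformly and the proof is complete.
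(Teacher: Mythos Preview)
Your proof is correct. The verification of F1)--F4) is exactly what the paper calls a ``direct computation,'' and you have simply spelled it out; there is no difference in approach there.

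For the implicative-filter step there is a minor but genuine variation. The paper argues via (hN2): from $x,\,x\ra y\in 1/\theta$ it gets $x\we(x\ra y)\in 1/\theta$, hence $x\we(\s x\vee y)\in 1/\theta$ by (hN2), and then uses $\s x/\theta = 0/\theta$ to extract $y/\theta = 1/\theta$. You instead use compatibility of $\theta$ with $\ra$ in the first argument to pass from $(x\ra y)\,\theta\,1$ and $x\,\theta\,1$ to $(1\ra y)\,\theta\,1$, and then invoke $1\ra y\leq y$ from Proposition~\ref{pvi}(1) together with the upset property of $1/\theta$. Your route avoids (hN2) and the negation $\s$ entirely at this stage, which is a small economy. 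The presentation could be tightened: the detour through ``$y\,\theta\,(y\vee(1\ra y))=y$ is vacuous'' is unnecessary; simply note that $1/\theta$ is an upset (since $\theta$ is a lattice congruence) and conclude directly from $1\ra y\leq y$ and $(1\ra y)\in 1/\theta$.
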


\begin{proof}
Let $\theta \in \Con(T)$. Note that $1/\theta$ is an upset because
$\theta$ is in particular a lattice congruence. In order to show that $1/\theta$ is an implicative 
filter, let $x,y\in T$ such that $x, x\ra y\in 1/\theta$. Then $x \we (x\ra y)\in 1/\theta$.
Since $1/\theta$ is an upset then it follows from (hN2)
that $x \we (\s x \vee y) \in 1/\theta$.
Moreover, since $x/\theta = 1/\theta$ then $\s x/\theta = 0/\theta$, so $y/\theta = 1/\theta$, i.e.,
$y\in 1/\theta$. Hence, $1/\theta$ is an implicative filter. The fact that $1/\theta$ is an 
h-implicative filter follows from a direct computation.
\end{proof}

\begin{lemma} \label{lc3}
Let $T\in \KhIL$, $\theta \in \Con(T)$ and $x,y \in T$. Then $(x,y)\in \theta$ if and only if
$x\ra y$, $\s y \ra \s x$, $y\ra x$, $\s x \ra \s y \in 1/\theta$.
\end{lemma}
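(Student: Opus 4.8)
The plan is to prove both implications directly using the characterization of $\theta$ via the relation $\co$ (equation (\ref{rel})) and the fact that $\theta\in\Con(T)$ is compatible with all the operations, including $\s$ and $\ra$. For the forward direction, suppose $(x,y)\in\theta$. Then in the quotient $T/\theta$ we have $x/\theta=y/\theta$, hence $x/\theta\ra y/\theta=x/\theta\ra x/\theta$, which equals $1/\theta$ by (hN1); thus $x\ra y\in 1/\theta$. The same argument with the roles of $x$ and $y$ swapped gives $y\ra x\in 1/\theta$. Since $\theta$ is also compatible with $\s$, we have $\s x/\theta=\s y/\theta$, and repeating the previous reasoning yields $\s x\ra\s y\in 1/\theta$ and $\s y\ra\s x\in 1/\theta$. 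This settles the ``only if'' part.

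For the converse, assume $x\ra y,\ y\ra x,\ \s x\ra\s y,\ \s y\ra\s x\in 1/\theta$. Passing to the quotient, this says $x/\theta\ra y/\theta=1/\theta$, $y/\theta\ra x/\theta=1/\theta$, $\s x/\theta\ra\s y/\theta=1/\theta$ and $\s y/\theta\ra\s x/\theta=1/\theta$. I would like to conclude $x/\theta=y/\theta$. The natural tool is Proposition \ref{pvi}(3): in any $T'\in\KhIL$, if $u\ra v=1$ and $\s v\ra\s u=1$ then $u\leq v$. However, to apply this to $T/\theta$ I first need to know that $T/\theta\in\KhIL$, equivalently that $T/\theta$ carries the structure of an h-Nelson algebra with the quotient operations. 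This is immediate once one observes that $\KhIL$ is a variety (proved in the theorem preceding this section), so the quotient of an h-Nelson algebra by a congruence is again an h-Nelson algebra. Granting that, apply Proposition \ref{pvi}(3) in $T/\theta$ with $u=x/\theta$, $v=y/\theta$ (using $x/\theta\ra y/\theta=1/\theta$ and $\s y/\theta\ra\s x/\theta=1/\theta$) to get $x/\theta\preceq y/\theta$, and symmetrically with $u=y/\theta$, $v=x/\theta$ to get $y/\theta\preceq x/\theta$; hence $x/\theta=y/\theta$, that is, $(x,y)\in\theta$.

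The main obstacle, then, is purely bookkeeping: making sure the quotient $T/\theta$ is legitimately an object of $\KhIL$ so that Proposition \ref{pvi} can be invoked there. Since $\KhIL$ has just been shown to be a variety, this is automatic, and the rest of the argument is a direct translation of statements between $T$ and $T/\theta$ using $\theta$-compatibility with $\we$, $\vee$, $\ra$ and $\s$. One subtlety worth noting explicitly is that membership in $1/\theta$ for an element $a\ra b$ is exactly the assertion $a/\theta\ra b/\theta=1/\theta$, which by Lemma \ref{r1}(1) applied in $T/\theta\in\hIL$ forces $a/\theta\preceq b/\theq$ — but we do not even need this finer fact here, since Proposition \ref{pvi}(3) already packages what we want.
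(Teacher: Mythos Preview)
Your proof is correct and follows essentially the same approach as the paper: the forward direction is immediate from compatibility of $\theta$ with $\ra$ and $\s$ together with (hN1), and the converse passes to the quotient $T/\theta\in\KhIL$ (using that $\KhIL$ is a variety) and invokes Proposition~\ref{pvi}(3) to conclude $x/\theta=y/\theta$. The opening remark about ``the characterization of $\theta$ via the relation $\co$'' is a slip (here $\theta$ is an arbitrary congruence, not the specific relation of (\ref{rel})), but it plays no role in your actual argument.
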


\begin{proof}
Let $\theta \in \Con(T)$ and $x,y \in T$. It is immediate that if $(x,y)\in \theta$ then
$x\ra y$, $\s y \ra \s x$, $y\ra x$, $\s x \ra \s y \in 1/\theta$.
Conversely, assume that $x\ra y$, $\s y \ra \s x$, $y\ra x$, $\s x \ra \s y \in 1/\theta$.
Since $T/\theta \in \KhIL$ then it follows from Proposition \ref{pvi} that
$x/\theta = y/\theta$, i.e., $(x,y)\in \theta$.
\end{proof}

\begin{lemma} \label{lc4}
Let $T\in \KhIL$ and $\theta,\psi \in \Con(T)$. Then $\theta\subseteq \psi$ if and only if
$1/\theta \subseteq 1/\psi$. In particular, $\theta = \psi$ if and only if $1/\theta = 1/\psi$.
\end{lemma}

\begin{proof}
Let $\theta,\psi \in \Con(T)$. It is immediate that if $\theta\subseteq \psi$ then
$1/\theta \subseteq 1/\psi$. The converse follows from a direct computation based in Lemma \ref{lc3}.
\end{proof}

\begin{lemma}\label{lc5}
Let $T\in \KhIL$ and $F$ an implicative filter of $T$ which satisfies
F1) and F2) of Definition \ref{gf}
for every $x,y\in T$ and $f\in F$. Then $F$ is a filter. Moreover, $x\in F$
if and only if $1\ra x\in F$.
\end{lemma}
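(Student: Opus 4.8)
The plan is to establish the three claims in order: first that $F$ is an upset, then that $x\in F$ if and only if $1\ra x\in F$, and finally that $F$ is closed under $\we$; the last two, together with $1\in F$, give that $F$ is a filter.

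First I would show that $F$ is an upset, which is essentially the content of Remark~\ref{remark18} and uses only F2): given $x\le y$ with $x\in F$, I would instantiate F2) with the filter element $f:=x$ and the given $x,y$, and use $x\we x = x$, $y\we x = x$ and $x\ra x = 1$ (by (hN1)) to obtain $1\ra(x\ra y)\in F$; since $1\in F$ and $F$ is an implicative filter this yields $x\ra y\in F$, whence $y\in F$.

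Next, for the equivalence: the direction $1\ra x\in F \Rightarrow x\in F$ follows immediately from part~1 of Proposition~\ref{pvi} (namely $1\ra x\le x$) together with the upset property just proved. For the converse I would instantiate F2) with first variable $1$, second variable $x$ and $f:=x$, simplify using $1\we x = x = x\we x$ and $x\ra x = 1$ to reach $1\ra(1\ra x)\in F$, and then apply modus ponens with $1\in F$. Finally, for closure under $\we$, take $x,y\in F$; the equivalence gives $1\ra x\in F$, and instantiating F1) with first variable $1$, second variable $x$ and $f:=y$ yields $(1\ra x)\ra(y\ra(x\we y))\in F$ (using $1\we y = y$); two applications of the implicative-filter closure, first with $1\ra x\in F$ and then with $y\in F$, give $x\we y\in F$.

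I do not expect a genuine obstacle here; the only point requiring care is selecting the correct instances of F1) and F2), after which everything reduces to the closure property defining implicative filters together with the elementary facts $x\we x = x$, $1\we x = x$, $x\ra x = 1$ and $1\ra x\le x$.
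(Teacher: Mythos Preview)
Your proposal is correct and essentially identical to the paper's own proof: the same instantiations of F1) and F2) are used (up to swapping the roles of $x$ and $y$ in the closure-under-$\we$ step), and the same appeals to Proposition~\ref{pvi} and Remark~\ref{remark18} appear. No meaningful difference in approach.
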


\begin{proof}
 It follows from the definition of implicative filter that $1\in F$. From F2), in Remark \ref{remark18}, it follows that $F$ is an upset.
Now, let $x\in T$. In order to show that $x\in F$ if and only if $1\ra x\in F$,
suppose that $x\in F$. Then
\[
1\ra (1\ra x) = ((1\we x)\ra (x\we x))\ra (1\ra x)\in F,
\]
so $1\ra x\in F$. The converse follows from Proposition \ref{pvi}.

Finally, in order to show that $F$ is closed by $\wedge$, let
$x, y \in F$.
Then
$1\ra y\in F$. Moreover,
\[
(1\ra y) \ra ((1\we x) \ra (y\we x))\in F
\]
because $x\in F$, so since $1\ra y \in F$
then $x\ra (y\we x)\in F$. Thus, $y\we x\in F$.
Hence, $F$ is a filter.
\end{proof}

In the framework of h-Nelson algebras, as usual we define the binary
term 
\[
x\rla y = (x\ra y)\we (y\ra x).
\]
Let $T\in \KhIL$ and $x,y\in T$. We also define
\[
s(x,y) = (x\rla y) \we (\s x \rla \s y).
\]
It is interesting to note that $s(x,y) = s(y,x) = s(\s x, \s y) = s(\s y, \s x)$.

\begin{lemma}\label{lc6}
Let $T\in \KhIL$ and $x,y,z\in T$. Then the following conditions are satisfied:
\begin{enumerate}[\normalfont 1)]
\item $(x\we (x\rla y)) \ra (y\we (x\rla y)) = 1$,
\item $(x\we s(x,y)) \ra (y\we s(x,y)) = 1$,
\item $(x\we (x\rla y)\we (y\rla z)) \ra (z\we (x\rla y)\we (y\rla z)) = 1$,
\item $(x\we s(x,y) \we s(y,z)) \ra (z\we s(x,y)\we s(y,z)) = 1$,
\item $((x\vee z)\we (x\rla y)) \ra ((y\vee z) \we (x\rla y)) = 1$,
\item $(x\we s(x,y))\ra (z\we s(x,y)) = (y\we s(x,y))\ra (z\we s(x,y))$,
\item $(z\we s(x,y)) \ra (x\we s(x,y)) = (z\we s(x,y)) \ra (y\we s(x,y))$,
\item $((z\we (x\rla y)) \ra (x\we (x\rla y))) \ra ((z\we (x\rla y)) \ra (y\we (x\rla y))) = 1$,
\item $((x\we (x\rla y)) \ra (z\we (x\rla y))) \ra ((y\we (x\rla y)) \ra (z\we (x\rla y))) = 1$,
\item $(x\we \s x) \ra 0 = 0 \ra (x\we \s x) = 1$.
\end{enumerate}
\end{lemma}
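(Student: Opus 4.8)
The plan is to reduce the entire list to a single computation inside a twist structure, exactly as in the proof of Proposition \ref{eq neg}. Every one of the ten conditions is an equation (for items 1)--5) and 8)--10)) or an equality between two terms (for items 6) and 7)) in the language of h-Nelson algebras, hence is preserved by isomorphisms and by passing to subalgebras; so by Theorem \ref{rept}, together with the fact that $T/\co\in\hIL$, it suffices to prove that $\K(A)$ satisfies all ten conditions for an arbitrary $A\in\hIL$. I therefore fix $A\in\hIL$ and $x=(a,b)$, $y=(c,d)$, $z=(e,f)$ in $\K(A)$, keeping in mind that $a\we b=c\we d=e\we f=0$, and compute directly with the operations of $\K(A)$ recalled in Section \ref{s2}.

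The first step is to record the pairs that keep recurring: $x\rla y=((a\ra c)\we(c\ra a),\ (a\we d)\vee(b\we c))$ and $s(x,y)=((a\ra c)\we(c\ra a)\we(b\ra d)\we(d\ra b),\ (a\we d)\vee(b\we c))$, where the second coordinate of $s(x,y)$ is exactly that of $x\rla y$. The one algebraic fact that does the real work is Lemma \ref{r1}(3), applied inside $A$: $a\we(a\ra c)\we(c\ra a)=c\we(a\ra c)\we(c\ra a)$, and, with $b,d$ in place of $a,c$, $b\we(b\ra d)\we(d\ra b)=d\we(b\ra d)\we(d\ra b)$. Writing $g$ for the first coordinate of $s(x,y)$ and $q$ for that of $x\rla y$, these give $a\we g=c\we g$, $b\we g=d\we g$ and $a\we q=c\we q$, together with their obvious three-variable analogues (obtained by chaining Lemma \ref{r1}(3) twice) when $z$ enters, as in items 3) and 4). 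From here every item is routine. For the ``$=1$'' conditions 1)--5), 8) and 9): after these coincidences the two sides of the relevant implication have equal first coordinate, so the first coordinate of the implication is of the form $p\ra p=1$; and its second coordinate is a join of terms each bounded above by $a\we b$, by $c\we d$, or by $e\we f$ --- using distributivity of $A$, the absorption $a\we(a\ra c)\le c$, and sometimes two of the constraints $a\we b=c\we d=e\we f=0$ simultaneously --- hence it is $0$. For the equalities 6) and 7): one writes out both implications and observes that, thanks to $a\we g=c\we g$ (and, for 7), also $b\we g=d\we g$), they agree in both coordinates. Condition 10) is the well-definedness computation already performed in the proof of Theorem \ref{rept}: in $\K(A)$ one has $x\we\s x=(0,a\vee b)$, whence $(x\we\s x)\ra 0=(1,0)$ and $0\ra(x\we\s x)=(1,0)$ (equivalently, it follows from (hN1), (hN3) and (hN4), as there).

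The only genuine difficulty is bookkeeping: there are ten identities, several involving three variables, and the second coordinates expand into sizeable joins. I would organize the write-up so as to first prove the coincidence-of-first-coordinates facts (immediate from Lemma \ref{r1}(3)) and a short catalogue of vanishing products (each an instance of $a\we b=0$, $c\we d=0$, or $e\we f=0$ combined with absorption and distributivity), and then dispatch all ten items by quoting these. No new idea is required beyond the twist-structure reduction provided by Theorem \ref{rept}; the whole content is that reduction together with Lemma \ref{r1}(3).
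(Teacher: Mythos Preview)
Your proposal is correct and follows the paper's approach: reduce to $\K(A)$ via Theorem \ref{rept} and then compute using Lemma \ref{r1}(3). The only organizational difference is that the paper, after proving item 1) directly, derives items 2)--5), 8) and 9) from item 1) using the axioms (hN7)--(hN10) (already verified in $\K(A)$ by Proposition \ref{Kalman}) rather than computing each from scratch, which saves some of the second-coordinate bookkeeping you anticipate.
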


\begin{proof}
By Theorem \ref{rept} without loss of generality we can assume that $T = \K(A)$ for
some $A\in \hIL$. Let $x = (a,b)$, $y = (c,d)$ and $z = (e,f)$ be elements of $\K(A)$. 
A direct computation shows that
\[
x\rla y = (a\rla c, (a\we d)\vee (c\we b)),
\]
\[
\s x \rla \s y = (b\rla d, (a\we d)\vee (c\we b)),
\]
\[
y\rla z = (c\rla e, (c\we f) \vee (e\we d)),
\]
\[
\s y \rla \s z = (d\rla f, (c\we f) \vee (e\we d)),
\]
so
\[
s(x,y) = ((a\rla c) \we (b\rla d), (a\we d)\vee (c\we b)).
\]
In order to show 1) note that
\[
x\we (x\rla y) = (a \we (a\rla c), b\vee (a\we d)),
\]
\[
y \we (x\rla y) = (c\we (a\rla c), d\vee (c\we b)).
\]
Since $a \we (a\rla c) = c\we (a\rla c)$,
a direct computation proves that
\[
(x\we (x\rla y)) \ra (y\we (x\rla y)) = (1,0).
\]

The conditions 2) follows from 1) of this lemma and (hN8).

In order to show the condition 3), note that it follows from 1) of this lemma
and (hN8)
that
\[
(x\we (x\rla y)\we (y\rla z)) \ra (y\we (x\rla y)\we (y\rla z) )= 1,
\]
\[
(y\we (x\rla y)\we (y\rla z)) \ra (z\we (x\rla y)\we (y\rla z)) = 1.
\]
Hence, it follows from (hN7) 
that 
\[
(x\we (x\rla y)\we (y\rla z)) \ra (z\we (x\rla y)\we (y\rla z)) = 1.
\]

The condition 4) follows from 3) of this lemma and (hN8).

The condition 5) follows from 1) of this lemma, the distributivity of the
underlying lattice of $T$ and (hN9).

In order to show 6), note that
\[
x\we s(x,y) = (a\we (a\rla c) \we (b\rla d),b \vee (a\we d)),
\]
\[
y\we s(x,y) = (c\we (a\rla c)\we (b\rla d), d\vee (c\we b)),
\]
\[
z\we s(x,y) = (e\we (a\rla c)\we (b\rla d), f \vee (a\we d)\vee (c\we b)).
\]
Given $(u,v) \in T$ we define $\pi_1(u,v) = u$ and $\pi_2(u,v) = v$.
It is immediate that $\pi_1(x\we s(x,y)) = \pi_1(y\we s(x,y))$,
so 
\[
\pi_1((x\we s(x,y)) \ra (z\we s(x,y))) = \pi_1((y\we s(x,y)) \ra (z\we s(x,y))).
\]
Besides, a direct computation shows that 
\[
\pi_2(x\we s(x,y)) \ra (z\we s(x,y)) = f\we a\we (a\rla c)\we (b\rla d),
\] 
\[
\pi_2((y\we s(x,y)) \ra (z\we s(x,y)) = f\we c\we (a\rla c)\we (b\rla d).
\]
Since $a\we (a\rla c) = c\we (a\rla c)$ then 
\[
f\we a\we (a\rla c)\we (b\rla d) = f\we c\we (a\rla c)\we (b\rla d).
\]
Thus, we have proved 6). A similar argument shows 7). 

The conditions 8) and 9) follow from 1) of this lemma and (hN10).

Condition 10) follows from a direct computation.
\end{proof}

Let $T\in \KhIL$. Let us take $F$ an h-implicative filter of $T$. We define the set
\[
\Theta(F) = \{(x,y)\in T\times T:s(x,y)\in F\}.
\]
Note that $s(x,y)\in F$ if and only if $x\ra y, y\ra x, \s x \ra \s y, \s y \ra \s x \in F$.

\begin{lemma}\label{lc7}
Let $T\in \KhIL$ and $F\in \gIF(T)$. Then $\Theta(F)$ is an equivalence relation such that $1/\Theta(F) = F$.
\end{lemma}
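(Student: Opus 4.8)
The plan is to verify the three defining properties of an equivalence relation for $\Theta(F)$ and then to compute the class $1/\Theta(F)$. Throughout I will use the reformulation noted just before the statement: $(x,y)\in\Theta(F)$ iff all four of $x\ra y$, $y\ra x$, $\s x\ra\s y$, $\s y\ra\s x$ belong to $F$; equivalently, recalling $s(x,y) = (x\rla y)\we(\s x\rla\s y)$ and that $F$ is a filter by Lemma \ref{lc5} (an h-implicative filter satisfies F1) and F2)), $s(x,y)\in F$ iff each of these four elements lies in the upset $F$. Reflexivity is immediate from (hN1), since $x\ra x = \s x\ra\s x = 1\in F$. Symmetry is immediate too, because the four-element description is visibly symmetric in $x$ and $y$ (indeed $s(x,y)=s(y,x)$, as remarked).

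For transitivity, suppose $(x,y)\in\Theta(F)$ and $(y,z)\in\Theta(F)$. I want $s(x,z)\in F$, i.e. $x\ra z$, $z\ra x$, $\s x\ra\s z$, $\s z\ra\s x$ all in $F$. By hypothesis $s(x,y), s(y,z)\in F$, so $s(x,y)\we s(y,z)\in F$. Now part 4) of Lemma \ref{lc6} gives
\[
(x\we s(x,y)\we s(y,z))\ra(z\we s(x,y)\we s(y,z)) = 1\in F.
\]
Since $F$ is an implicative filter and $s(x,y)\we s(y,z)\in F$, I can peel off: first, applying F1)/F2)-type manipulations or more directly using that $F$ is closed under the operations guaranteed, deduce from the displayed equation (together with $s(x,y)\we s(y,z)\in F$) that $x\ra z\in F$. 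The cleanest route is: the element $x\we s(x,y)\we s(y,z)$ and $z\we s(x,y)\we s(y,z)$ differ, modulo $F$, only in the factor $x$ versus $z$; using that $s(x,y)\we s(y,z)\in F$ and the implicative-filter closure one obtains $x\ra z\in F$. Symmetrically (swapping the roles of $x$ and $z$, which is legitimate since $s$ is symmetric) one gets $z\ra x\in F$. For the negation components, use that $s(x,y)=s(\s x,\s y)$ and $s(y,z)=s(\s y,\s z)$, so the same argument applied to $\s x,\s y,\s z$ yields $\s x\ra\s z\in F$ and $\s z\ra\s x\in F$. Hence $s(x,z)\in F$ and $(x,z)\in\Theta(F)$.

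It remains to show $1/\Theta(F) = F$. If $x\in 1/\Theta(F)$ then $(x,1)\in\Theta(F)$, so in particular $1\ra x\in F$; by Lemma \ref{lc5} this gives $x\in F$. Conversely, if $x\in F$, I must check $1\ra x$, $x\ra 1$, $\s x\ra\s 1$, $\s 1\ra\s x$ all lie in $F$. Now $x\ra 1 = 1\in F$ by the remark after (hN1) on the Kleene/lattice structure (or directly: $1$ is the top and $F$ is an upset, with $x\ra 1$ being $1$ since in any h-lattice $a\ra 1=1$ — this follows from $a\we(a\ra 1)\le 1$ being vacuous and $1\ra 1 = 1$ via (hN8)/(hN9); more simply it is a known identity in $\hIL$). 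Also $\s 1 = 0$, so $\s 1\ra\s x = 0\ra\s x$, which equals $1\in F$ by part 10) of Lemma \ref{lc6}. And $1\ra x\in F$ because $x\in F$ by Lemma \ref{lc5}. Finally $\s x\ra\s 1 = \s x\ra 0$; here I use that $x\in F$ means, after the upset property and the Kleene identities, $\s x\we x = x\we\s x$ is "small", and part 10) of Lemma \ref{lc6} gives $(\s x\we x)\ra 0 = 1$ — but what is needed is $\s x\ra 0\in F$, which should follow from $x\in F$ together with $x\we(\s x\ra 0)\le \s x\ra 0$ and the implicative-filter property applied to a suitable instance; alternatively observe $\s x\ra 0 = \s x\ra\s 1$ and use F3)/F4) with $f = x$ and an appropriate substitution, or simply note $0\ra\s x$ and $\s x\ra 0$ are handled by the same twist-structure computation that proves $\rho$ is well defined in Theorem \ref{rept}. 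So all four elements are in $F$, giving $x\in 1/\Theta(F)$.

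The main obstacle I anticipate is the transitivity step: specifically, extracting $x\ra z\in F$ from the Lemma \ref{lc6}(4) identity together with $s(x,y)\we s(y,z)\in F$, since this requires the implicative-filter closure to interact correctly with the "common factor" $s(x,y)\we s(y,z)$. One should verify carefully that $F$ being an implicative filter (plus F1)/F2), hence a filter) suffices to conclude $x\ra z\in F$ from $(x\we f)\ra(z\we f)=1\in F$ with $f\in F$ — this is essentially an application of conditions F1)/F2) of Definition \ref{gf} read in reverse together with the implicative-filter property, and it is the place where the extra axioms F1)--F4) on h-implicative filters (as opposed to plain implicative filters) are genuinely used. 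The negation parts then come for free from the symmetry identities $s(x,y)=s(\s x,\s y)$.
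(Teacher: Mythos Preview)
Your approach follows the paper's closely for reflexivity, symmetry, and transitivity, and you correctly identify that F2) is the key ingredient for transitivity: from $(x\we f)\ra(z\we f)=1\in F$ with $f=s(x,y)\we s(y,z)\in F$, condition F2) gives $((x\we f)\ra(z\we f))\ra(x\ra z)\in F$, whence $x\ra z\in F$. You leave this a bit underspecified, but the idea is right and matches the paper.

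The direction $x\in F\Rightarrow x\in 1/\Theta(F)$, however, contains two genuine errors. First, you assert $x\ra 1=1$ as ``a known identity in $\hIL$''. It is not: in the h-lattice of Example~\ref{example1} one has $a\ra 1=0$, and hence in $\K(A)$ (Example~\ref{example2}) $(a,0)\ra(1,0)=(0,0)\neq(1,0)$. Second, you claim $0\ra\s x=1$ follows from Lemma~\ref{lc6}(10), but that item only yields $0\ra(x\we\s x)=1$ and $(x\we\s x)\ra 0=1$, not $0\ra\s x=1$; again Example~\ref{example1} shows $0\ra a=a\neq 1$. Your treatment of $\s x\ra 0\in F$ is similarly hand-waving.

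The paper's fix for all three is the same trick you used for transitivity: apply F2) with $f=x\in F$. For instance, $((x\we x)\ra(1\we x))\ra(x\ra 1)\in F$ reads $1\ra(x\ra 1)\in F$, so $x\ra 1\in F$ by Lemma~\ref{lc5}. Likewise $((0\we x)\ra(\s x\we x))\ra(0\ra\s x)\in F$ together with $0\ra(\s x\we x)=1$ gives $0\ra\s x\in F$, and $((\s x\we x)\ra(0\we x))\ra(\s x\ra 0)\in F$ together with $(\s x\we x)\ra 0=1$ gives $\s x\ra 0\in F$. None of these four elements need equal $1$; they only need to lie in $F$, and that is secured by F2) and the hypothesis $x\in F$.
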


\begin{proof}
Let $F\in \gIF(T)$. First we will show that $\Theta(F)$ is an equivalence relation.
It is immediate that $\Theta(F)$ is a reflexive and symmetric relation. In order to show the
transitivity, let $x,y,z\in T$ such that $(x,y), (y,z) \in \Theta(F)$, i.e., $s(x,y), s(y,z) \in F$.
Let $f = s(x,y)\we s(y,z)$. It follows from Lemma \ref{lc5} that $F$ is a filter, so $f\in F$.
Besides it follows from Lemma \ref{lc6} that $(x\we f) \ra (z\we f) = 1$.
But $((x\we f) \ra (z\we f)) \ra (x\ra z)\in F$,  so $x\ra z \in F$.
Similarly we have that $z\ra x\in, \s x \ra \s z, \s z\ra \s x \in F$.
Thus, $s(x,z)\in F$, i.e., $(x,z)\in \Theta(F)$. Hence, $\Theta(F)$ is a transitive relation.
In consequence, $\Theta(F)$ is an equivalence relation.

Finally, let $x\in T$. Then $x\in 1/\Theta(F)$ if and only if $x\ra 1, 1\ra x, 0\ra \s x$ and $\s x\ra 0 \in F$.
Suppose that $x\ra 1, 1\ra x, 0\ra \s x$ and $\s x\ra 0 \in F$. Since $1\ra x\in F$ then it follows from 
Lemma \ref{lc5} that $x\in F$. Conversely, assume that $x\in F$. It follows from Lemma \ref{lc5} that $1\ra x\in F$.
Besides, $((x\we x)\ra (1\we x))\ra (x\ra 1) \in F$, i.e., $1\ra (x\ra 1) \in F$, so $x\ra 1 \in F$.
Moreover, it follows from Lemma \ref{lc6} that $0\ra (\s x \we x) = 1$.
Since $((0\we x) \ra (\s x \we x)) \ra (0\ra \s x)\in F$ then $0\ra \s x \in F$.
Finally, since $(\s x \we x) \ra 0 = 1$ and $((\s x \we x)\ra (0\we x))\ra (\s x \ra 0) \in F$ then
$\s x \ra 0 \in F$. Therefore, $1/\Theta(F) = F$.
\end{proof}

\begin{lemma}\label{lc8}
Let $T\in \KhIL$ and $F\in \gIF(T)$. Then $\Theta(F) \in \Con(T)$.
\end{lemma}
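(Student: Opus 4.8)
The plan is to verify that the equivalence relation $\Theta(F)$ — which by Lemma~\ref{lc7} satisfies $1/\Theta(F)=F$ — is compatible with all the operations $\we,\vee,\ra,\s,0,1$. Compatibility with the constants $0,1$ is trivial, and compatibility with $\s$ is immediate from the identity $s(x,y)=s(\s x,\s y)$ noted before Lemma~\ref{lc6}. I will repeatedly use that, being an h-implicative filter, $F$ is a filter (Lemma~\ref{lc5}) and an upset (Remark~\ref{remark18}); hence $s(a,b)\in F$ if and only if each of $a\ra b$, $b\ra a$, $\s a\ra\s b$, $\s b\ra\s a$ belongs to $F$, which turns every remaining compatibility requirement into the membership in $F$ of finitely many implications.

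First I would establish the following auxiliary fact: if $p,q\in T$ satisfy $s(p,q)\in F$, then for every $w\in T$ the four elements $(p\we w)\ra(q\we w)$, $(q\we w)\ra(p\we w)$, $(p\vee w)\ra(q\vee w)$, $(q\vee w)\ra(p\vee w)$ lie in $F$. Indeed, Lemma~\ref{lc6}(2) combined with (hN8) yields $((p\we w)\we s(p,q))\ra((q\we w)\we s(p,q))=1$, and a similar computation from Lemma~\ref{lc6}(5), again using (hN8), yields $((p\vee w)\we s(p,q))\ra((q\vee w)\we s(p,q))=1$; since $s(p,q)\in F$, the corresponding instances of F2) lie in $F$, and modus ponens (allowed because $F$ is an implicative filter containing $1$) gives $(p\we w)\ra(q\we w)\in F$ and $(p\vee w)\ra(q\vee w)\in F$, the remaining two following by interchanging $p$ and $q$. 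With this in hand, compatibility of $\Theta(F)$ with $\we$ is obtained as follows: given $(x,y)\in\Theta(F)$ and $z\in T$, the auxiliary fact applied to $(x,y)$ puts $(x\we z)\ra(y\we z)$ and $(y\we z)\ra(x\we z)$ in $F$, while applied to $(\s x,\s y)$ — legitimate since $s(\s x,\s y)=s(x,y)\in F$ — together with the De Morgan laws $\s(x\we z)=\s x\vee\s z$ and $\s(y\we z)=\s y\vee\s z$, it puts $\s(x\we z)\ra\s(y\we z)$ and $\s(y\we z)\ra\s(x\we z)$ in $F$; hence $s(x\we z,y\we z)\in F$, i.e., $(x\we z,y\we z)\in\Theta(F)$. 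Compatibility with $\vee$ is completely symmetric.

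The delicate case, which I expect to be the main obstacle, is compatibility with $\ra$; the trick is to use $f:=s(x,y)$ itself — not merely $x\rla y$ — as the auxiliary multiplier. Let $(x,y)\in\Theta(F)$, so $f=s(x,y)\in F$, and fix $z\in T$. Conditions F1)--F4) of Definition~\ref{gf}, instantiated with the filter element $f$, state exactly that $\bigl(u\ra v,\,(u\we f)\ra(v\we f)\bigr)\in\Theta(F)$ for all $u,v\in T$; in particular $(x\ra z)\,\Theta(F)\,((x\we f)\ra(z\we f))$ and $(y\ra z)\,\Theta(F)\,((y\we f)\ra(z\we f))$, and similarly $(z\ra x)\,\Theta(F)\,((z\we f)\ra(x\we f))$ and $(z\ra y)\,\Theta(F)\,((z\we f)\ra(y\we f))$. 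Now Lemma~\ref{lc6}(6) gives the \emph{equality} $(x\we f)\ra(z\we f)=(y\we f)\ra(z\we f)$, and Lemma~\ref{lc6}(7) the equality $(z\we f)\ra(x\we f)=(z\we f)\ra(y\we f)$; chaining through these coinciding middle terms and using the transitivity of $\Theta(F)$ (Lemma~\ref{lc7}), we obtain $(x\ra z)\,\Theta(F)\,(y\ra z)$ and $(z\ra x)\,\Theta(F)\,(z\ra y)$. This establishes compatibility with $\ra$, and therefore $\Theta(F)\in\Con(T)$. The point worth stressing is that a more naive handling of $\ra$ would seem to require a transitivity principle for $\ra$ inside $F$ (passing from $a\ra b\in F$ and $b\ra c\in F$ to $a\ra c\in F$), which is not available in general; bundling $x\rla y$ and $\s x\rla\s y$ together into $s(x,y)$ is precisely what makes the two interpolating implications of Lemma~\ref{lc6}(6)--(7) literally equal, so that no such principle is needed.
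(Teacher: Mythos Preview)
Your proof is correct and follows essentially the same route as the paper's: compatibility with $\s$ via $s(x,y)=s(\s x,\s y)$; compatibility with $\we$ (and $\vee$) via Lemma~\ref{lc6}(2),(5), condition (hN8), and F2) applied with the filter element $s(x,y)$; and compatibility with $\ra$ by using F1)--F4) to relate $u\ra v$ with $(u\we s(x,y))\ra(v\we s(x,y))$ and then invoking the literal equalities of Lemma~\ref{lc6}(6),(7) together with the transitivity of $\Theta(F)$. The only cosmetic differences are that you package the $\we/\vee$ step into an explicit auxiliary fact, and that the paper obtains $\vee$-compatibility from $\we$- and $\s$-compatibility via De Morgan rather than by a symmetric direct argument.
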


\begin{proof}
First we will see that $\Theta(F)$ preserves $\we$. Let $x,y,z\in T$ such that $(x,y) \in \Theta(F)$, i.e., $s(x,y)\in F$.
It follows from Lemma \ref{lc6} that 
\[
(x\we s(x,y)) \ra (y\we s(x,y)) = 1,
\]
\[
(y\we s(x,y)) \ra (x\we s(x,y)) = 1. 
\]
Thus, it follows from (hN8)
that 
\[
(x\we z\we s(x,y)) \ra (y\we z\we s(x,y)) = 1. 
\]
But 
\[
((x\we z\we s(x,y)) \ra (y\we z\we s(x,y))) \ra ((x\we z) \ra (y\we z))\in F, 
\]
so $(x\we z) \ra (y\we z) \in F$. Similarly, $(y\we z)\ra (x\we z)\in F$.
In order to show that $\s(x\we z) \ra \s(y\we z) \in F$, first 
note that 
\[
(\s x\we s(x,y)) \ra (\s y\we s(x,y)) = 1,
\]
\[
(\s y\we s(x,y)) \ra (\s x\we s(x,y)) = 1,
\]
so it follows from (hN9)
that
\[
(((\s x\we s(x,y)) \vee (\s z \we s(x,y))) \ra((\s y\we s(x,y)) \vee (\s z \we s(x,y))) = 1,
\]
i.e.,
\[
(\s (x\we z) \we s(x,y)) \ra (\s (y\we z)\we s(x,y)) = 1.
\]
Hence, $\s (x\we z) \ra \s(y\we z) \in F$. Analogously, we get 
$\s (y\we z) \ra \s(x\we z) \in F$. Thus, $(x\we z,y\we z) \in \Theta(F)$.

It follows from that $s(x,y) = s(\s x, \s y)$ the fact that $\Theta(F)$ preserves $\s$.
Since we have also proved that $\Theta(F)$ preserves $\we$ and in particular $T$ is a
Kleene algebra, then $\Theta(F)$ preserves $\vee$.

In what follows we will show that $\Theta(F)$ preserves the implication. Let $x,y,z\in T$
such that $(x,y)\in \Theta(F)$, i.e., $s(x,y) \in F$.
Then 
\[
(x\ra z, ((x\we s(x,y))\ra (z\we s(x,y))),
\]
\[
(y\ra z, ((y\we s(x,y))\ra (z\we s(x,y)))
\]
are elements of $\Theta(F)$. Since it follows from Lemma \ref{lc6}
that 
\[
(x\we s(x,y))\ra (z\we s(x,y)) = (y\we s(x,y))\ra (z\we s(x,y))
\]
then the transitivity of $\Theta(F)$ implies that $(x\ra z,y\ra z) \in \Theta(F)$.
A similar argument based on Lemma \ref{lc6} shows that $(z\ra x,z\ra y)\in \Theta(F)$.
Hence, $\Theta(F)$ preserves the implication.

Then, $T/\Theta(F) \in \KhIL$. Therefore, $\Theta(F) \in \Con(T)$.
\end{proof}

\begin{theorem}\label{thm-con}
Let $T\in \KhIL$. The assignments $\theta \mapsto 1/\theta$ and $F\mapsto \Theta(F)$ establish an
order isomorphism between $\Con(T)$ and $\gIF(T)$. 
\end{theorem}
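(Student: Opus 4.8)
The plan is to show that the two assignments are well-defined, mutually inverse, and order-preserving, so that together they constitute an order isomorphism. Write $\alpha\colon\Con(T)\to\gIF(T)$, $\alpha(\theta)=1/\theta$, and $\beta\colon\gIF(T)\to\Con(T)$, $\beta(F)=\Theta(F)$. That $\alpha$ takes values in $\gIF(T)$ is exactly Lemma \ref{lc2}, and that $\beta$ takes values in $\Con(T)$ is the content of Lemmas \ref{lc7} and \ref{lc8}; so both maps are well-defined with no extra work.

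Next I would verify $\alpha\circ\beta=\mathrm{id}_{\gIF(T)}$: for $F\in\gIF(T)$ this is precisely the identity $1/\Theta(F)=F$ established in Lemma \ref{lc7}. Then I would verify $\beta\circ\alpha=\mathrm{id}_{\Con(T)}$: for $\theta\in\Con(T)$ and $x,y\in T$, by definition $(x,y)\in\Theta(1/\theta)$ iff $s(x,y)\in 1/\theta$, which — since $1/\theta$ is a filter, hence closed under $\we$ and upward closed — is equivalent to $x\ra y,\ y\ra x,\ \s x\ra\s y,\ \s y\ra\s x\in 1/\theta$; by Lemma \ref{lc3} this holds iff $(x,y)\in\theta$. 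Hence $\Theta(1/\theta)=\theta$, and $\alpha,\beta$ are mutually inverse bijections.

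Finally I would check monotonicity in both directions. For $\beta$ this is immediate from the definition of $\Theta$: if $F\subseteq G$ then $s(x,y)\in F$ forces $s(x,y)\in G$, so $\Theta(F)\subseteq\Theta(G)$. For $\alpha$ one can appeal directly to Lemma \ref{lc4}, which gives $\theta\subseteq\psi$ iff $1/\theta\subseteq1/\psi$; alternatively, monotonicity of $\alpha$ is obvious and, combined with the fact that $\alpha$ is a bijection whose inverse $\beta$ is also monotone, yields that $\alpha$ is an order isomorphism. Either way, $\alpha$ and $\beta=\alpha^{-1}$ furnish the desired order isomorphism between $\Con(T)$ and $\gIF(T)$.

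I do not expect a genuine obstacle here: the substantive content has been pushed entirely into the preparatory lemmas (especially Lemmas \ref{lc6}, \ref{lc7} and \ref{lc8}), and what remains is bookkeeping. The one point deserving a moment's care is the observation recorded just before Lemma \ref{lc7}, namely that $s(x,y)\in F$ is equivalent to the joint membership of the four implications $x\ra y$, $y\ra x$, $\s x\ra\s y$, $\s y\ra\s x$; this uses that $F$ is a lattice filter, as guaranteed by Lemma \ref{lc5}, and it is what makes the comparison with Lemma \ref{lc3} in the computation of $\beta\circ\alpha$ go through cleanly.
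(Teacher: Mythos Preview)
Your proposal is correct and matches the paper's approach: the paper states Theorem~\ref{thm-con} without proof, presenting it as an immediate consequence of the preparatory Lemmas~\ref{lc2}--\ref{lc8}, and you have assembled those lemmas in exactly the intended way. In particular, your identification of Lemma~\ref{lc7} for $\alpha\circ\beta=\mathrm{id}$, Lemma~\ref{lc3} (via the filter property from Lemma~\ref{lc5}) for $\beta\circ\alpha=\mathrm{id}$, and Lemma~\ref{lc4} for the order part is precisely what the authors had in mind.
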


In what follows we introduce and study open implicative filters and N-implicative filters.
These kind of filters will be used later.

\begin{definition}
An implicative filter $F$ of an h-Nelson algebra is said to be \textit{open} if $1\ra x \in F$
whenever $x\in F$.
\end{definition}

From Lemma \ref{lc5}, we obtain the following result.

\begin{proposition}\label{open-if}
Let $T\in \KhIL$. Then every h-implicative filter of $T$ is an open implicative filter of $T$.
\end{proposition}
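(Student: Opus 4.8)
The plan is to invoke Lemma~\ref{lc5} together with the definition of open implicative filter, so the argument should be short. Let $T\in\KhIL$ and let $F$ be an h-implicative filter of $T$. By definition, $F$ is in particular an implicative filter satisfying conditions F1) and F2) of Definition~\ref{gf} for every $x,y\in T$ and $f\in F$. These are exactly the hypotheses of Lemma~\ref{lc5}, which yields two conclusions: first, that $F$ is a filter (this part we do not even need here), and second, the equivalence $x\in F$ if and only if $1\ra x\in F$.

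From that equivalence the openness condition follows immediately: if $x\in F$, then $1\ra x\in F$, which is precisely the requirement that an implicative filter be open. Hence every h-implicative filter is an open implicative filter, as claimed.

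I would write the proof essentially as: ``Let $T\in\KhIL$ and let $F$ be an h-implicative filter of $T$. Then $F$ is an implicative filter satisfying conditions F1) and F2) of Definition~\ref{gf}, so by Lemma~\ref{lc5} we have that $x\in F$ if and only if $1\ra x\in F$, for every $x\in T$. In particular, if $x\in F$ then $1\ra x\in F$, which shows that $F$ is open. Therefore every h-implicative filter of $T$ is an open implicative filter of $T$.''

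There is no real obstacle here; the entire content has already been extracted in Lemma~\ref{lc5}. The only thing to be careful about is making sure the reader sees that an h-implicative filter automatically satisfies the hypotheses of Lemma~\ref{lc5} (it satisfies F1)--F4), hence in particular F1) and F2)), and that the second conclusion of that lemma is literally the defining property of openness. This is why the excerpt prefaces the proposition with ``From Lemma~\ref{lc5}, we obtain the following result.''
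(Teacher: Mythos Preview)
Your proposal is correct and follows exactly the paper's approach: the paper simply says ``From Lemma~\ref{lc5}, we obtain the following result,'' and your argument spells out precisely that deduction. There is nothing to add or change.
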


It is interesting to note that the converse of Proposition \ref{open-if}
is not necessarily satisfied. Indeed, let $T$ be the h-Nelson algebra given in Example \ref{example2}.
Let $F = \{(a,0),(1,0)\}$. It is immediate to show that $F$ is an open
implicative filter. Let $x = (0,1)$, $y = (1,0)$ and $f = (a,0)$.
A direct computation shows that $((x\we f)\ra (y\we f))\ra (x\ra y) = (0,0) \notin F$.
Thus, $F$ is not an h-implicative filter.

Let $\ra_N$ denote the derived implication defined by
\[
x \ra_N y = x \ra (x \wedge y).
\]
We now introduce the following definition.
\begin{definition}
Let $T\in \KhIL$ and $F\subseteq T$. We say that $F$ is an \textit{N-implicative filter}
if $1\in F$ and for every $x,y \in T$ it holds that if $x, x\ra_N \in F$ then $y\in F$.
\end{definition}

\begin{remark} \label{rem-Nsd}
Let $T\in \KhIL$ and $F$ an $N$-implicative filter. Then $F$ is an upset.
Indeed, let $x\leq y$ and $x\in F$. Since $x\ra_N y = 1\in F$ and $x\in F$ then $y\in F$.
Thus, $F$ is an upset.
\end{remark}

\begin{proposition}\label{N-ds}
Let $T\in \KhIL$. Then every h-implicative filter of $T$ is an N-implicative filter of $T$.
\end{proposition}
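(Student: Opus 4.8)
The plan is to show that every h-implicative filter $F$ of $T$ is an N-implicative filter, i.e.\ that $1\in F$ and that $F$ is closed under modus ponens for the derived implication $x\ra_N y = x\ra(x\we y)$. The condition $1\in F$ is immediate since $F$ is in particular an implicative filter. So the real content is: if $x\in F$ and $x\ra_N y = x\ra(x\we y)\in F$, then $y\in F$. First I would reduce the problem to facts about ordinary implicative filters plus conditions F1)--F4): since $x\in F$ and $x\ra(x\we y)\in F$, applying the implicative-filter closure gives $x\we y\in F$; and since $F$ is a filter (Lemma \ref{lc5}, using F1) and F2)) it is an upset (Remark \ref{remark18}), so from $x\we y\le y$ we conclude $y\in F$.

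Thus the key steps, in order, are: (1) observe $1\in F$ because $F$ is an implicative filter; (2) recall from Lemma \ref{lc5} that every implicative filter satisfying F1) and F2) is in fact a filter, hence an upset; (3) given $x\in F$ and $x\ra_N y\in F$, i.e.\ $x\ra(x\we y)\in F$, apply the definition of implicative filter to $x$ and $x\ra(x\we y)$ to obtain $x\we y\in F$; (4) use that $F$ is an upset together with $x\we y\le y$ to get $y\in F$; (5) conclude that $F$ satisfies the defining property of an N-implicative filter, so $F$ is an N-implicative filter of $T$.

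I do not expect any serious obstacle here; the statement is essentially a bookkeeping consequence of the already-established fact (Lemma \ref{lc5}) that an h-implicative filter is a filter, combined with the trivial inequality $x\we y\le y$ and closure under modus ponens for $\ra$. The only point requiring a moment's care is making sure that the hypothesis $x\ra_N y\in F$ is used in the form $x\ra(x\we y)\in F$ so that step (3) is a genuine instance of the implicative-filter closure condition; after that, step (4) is immediate from the upset property. If one wanted the cleanest exposition, one could simply cite Proposition \ref{open-if} and Lemma \ref{lc5} to reduce everything to "$F$ is a filter and an implicative filter", and then the two-line argument above finishes the proof.
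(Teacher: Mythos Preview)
Your proof is correct, but it takes a slightly different route from the paper's. You argue: from $x\in F$ and $x\ra(x\we y)\in F$, apply the implicative-filter closure once to get $x\we y\in F$, then use the upset property (via Lemma~\ref{lc5} or Remark~\ref{remark18}) together with $x\we y\le y$ to conclude $y\in F$. The paper instead invokes condition~F2) directly with $f=x$: since $x\in F$, F2) gives $((x\we x)\ra(y\we x))\ra(x\ra y)\in F$, i.e.\ $(x\ra(x\we y))\ra(x\ra y)\in F$; then two applications of modus ponens (first with $x\ra(x\we y)\in F$, then with $x\in F$) yield $y\in F$. Both arguments are short and both ultimately rest on F2); your version packages the use of F2) into the already-established filter/upset property, which is arguably cleaner, while the paper's version makes the role of F2) explicit and avoids appealing to Lemma~\ref{lc5}.
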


\begin{proof}
Let $T\in \KhIL$. Let us take $F$ an h-implicative filter of $T$ and $x, x\ra_N y \in F$.
In particular, $x\ra (x\we y) \in F$. Then it follows from F2) that 
$((x\we x) \ra (y\we x)) \ra (x\ra y)\in F$, i.e.,
$(x\ra (x\we y)) \ra (x\ra y) \in F$. Thus, taking into account that $F$ is an implicative filter
and $x, x\ra (x\we y) \in F$ we get $y\in F$. Therefore, $F$ is an 
N-implicative filter of $T$.
\end{proof}

The converse of Proposition \ref{N-ds} is not necessarily satisfied. Indeed,
let $T$ be the the h-Nelson algebra given in Example \ref{example2} and $F = \{(a,0),(1,0)\}$.
We have that $F$ is a not h-implicative filter.
However, a direct computation shows that $F$ is an N-implicative filter.

\section{Centered h-Nelson algebras} \label{s4}

A Kleene algebra (Nelson algebra) is called \textit{centered} if there exists an element which
is a fixed point with respect to the involution, i.e., an element $\ce$ such that 
$\s \ce = \ce$. This element is necessarily unique. If $T = \K(A)$ where $A$ is a bounded distributive
lattice, the center is $\ce = (0,0)$.
Let $T$ be a centered Kleene algebra. We define the following condition:
\begin{center}
$\CK$ For every $x,y\in T$ if $x, y\geq \ce$ and $x\we y\leq \ce$
then there exists $z\in T$ such that $z\vee \ce = x$ and $\s z \vee \ce = y$.
\end{center}
The condition $\CK$ is not necessarily satisfied in every centered Kleene algebra, 
see for instance Figure 1 of \cite{CCSM}. However, every centered Nelson algebra
satisfies the condition $\CK$ (see \cite[Theorem 3.5]{C} and \cite[Proposition 6.1]{CCSM}).

The following two properties are well known: 
\begin{itemize}
\item Let $T$ be a Kleene algebra. Then $T$ is isomorphic to $\K(A)$ for some
bounded distributive lattice $A$ if and only if $T$ is centered and satisfies the
condition $\CK$ (see \cite[Theorem 2.3]{C} and \cite[Proposition 6.1]{CCSM}).
\item Let $T$ be a Nelson algebra. Then $T$ is isomorphic to $\K(A)$ for some
Heyting algebra $A$ if and only if $T$ is centered (see \cite[Theorem 3.7]{C}).
\end{itemize}

An algebra $\langle T,\we,\vee, \ra,\s,0,1,\ce \rangle$ is
a \textit{centered h-Nelson algebra} if $\langle T,\we,\vee, \ra,\s,0,1\rangle$ 
is an h-Nelson algebra and $\ce$ is a center.
We write $\KhILc$ for the variety whose members are centered h-Nelson algebras.

In this section we prove that given $T\in \KhILc$, $T$ is isomorphic to $\K(A)$
for some h-lattice $A$ if and only if $T$ is centered and satisfies the
condition $\CK$ (we also show that the condition $\CK$ is not necessarily satisfied in every
centered h-Nelson algebra). Finally, we show that there exists a categorical equivalence
between $\hIL$ and the full subcategory of $\KhILc$ whose objects satisfy the condition $\CK$
\footnote{If $\mathrm{C}$ is a class of algebras, we abuse notation and also write in this way to refer us
to the algebraic category associated with this class.}. 

We start with some preliminary results.

\begin{remark}\label{rem-c}
\
\begin{enumerate}[\normalfont 1)]
\item Note that if $T, U$ are Kleene algebras, $T$ has a center $\ce_T$ and $f$ is a morphism in $\KhIL$ from $T$ to $U$, then 
$U$ has a center $\ce_U$ and $f(\ce_T) = \ce_U$. Indeed, $f(\ce_T) = f(\sim \ce_T) = \sim f(\ce_T)$.
\item If $T\in \KhILc$ then it follows from Theorem \ref{rept} and 1) that the map
$\rho_T:T\ra \K(T/\co)$ is a monomorphism in $\KhILc$.
\end{enumerate}
\end{remark}

\begin{lemma} \label{ce1}
Let $T\in \KhILc$ and $x,y \in T$.
\begin{enumerate}[\normalfont 1)]
\item If $x\ra y = 1$ then $x\vee \ce \leq y\vee \ce$.
\item If $x\vee \ce = y\vee \ce$ then $x\ra y = 1$.
\item  $\ce \ra 0 = 1$. 
\end{enumerate}
\end{lemma}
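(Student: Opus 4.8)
The plan is to reduce everything to the twist representation. By Remark~\ref{rem-c}~2), the map $\rho_T:T\ra \K(T/\co)$ is a monomorphism in $\KhILc$, so $T$ embeds into $\K(A)$ with $A = T/\co\in\hIL$, and this embedding carries $\ce$ to the center $(0,0)$ of $\K(A)$. Since all three statements are of the form ``an equation of the shape $u\ra v = 1$ holds'' or ``an inequality holds'', and such assertions are preserved and reflected by the monomorphism $\rho_T$, it suffices to verify each claim inside $\K(A)$ for an arbitrary $A\in\hIL$. So throughout we take $x = (a,b)$, $y = (c,d)$ in $\K(A)$ with $a\we b = c\we d = 0$, and $\ce = (0,0)$.

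For 1): assume $x\ra y = (1,0)$. Since $x\ra y = (a\ra c,\, a\we d)$, this gives $a\ra c = 1$ and $a\we d = 0$; by part~1 of Lemma~\ref{r1}, $a\ra c = 1$ yields $a\leq c$. Now $x\vee\ce = (a\vee 0,\, b\we 0) = (a, 0)$ and $y\vee\ce = (c, 0)$, and the order on $\K(A)$ in the first coordinate is the order of $A$ while in the second it is reversed; since $a\leq c$ and $0 = 0$, we conclude $(a,0)\leq (c,0)$, i.e.\ $x\vee\ce\leq y\vee\ce$.

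For 2): assume $x\vee\ce = y\vee\ce$, i.e.\ $(a,0) = (c,0)$, so $a = c$. Then $x\ra y = (a\ra c,\, a\we d) = (a\ra a,\, a\we d) = (1,\, a\we d)$, and since $a = c$ we have $a\we d = c\we d = 0$, whence $x\ra y = (1,0) = 1$. For 3): here $0 = (0,1)$, so $\ce\ra 0 = (0,0)\ra (0,1) = (0\ra 0,\; 0\we 1) = (1, 0) = 1$. The main (and only mild) obstacle is being careful that the embedding $\rho_T$ genuinely reflects statements of the form $u\ra v=1$ and $u\le v$ back to $T$ — but this is immediate since $\rho_T$ is an injective homomorphism preserving $\ra$, $\we$, $\vee$, $0$, $1$, $\s$ and $\ce$, so $u\ra v = 1$ in $T$ iff $\rho_T(u)\ra\rho_T(v) = (1,0)$ in $\K(T/\co)$, and similarly for $\le$.
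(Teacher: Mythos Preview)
Your proof is correct and follows essentially the same approach as the paper: both reduce to the twist structure $\K(A)$ via Remark~\ref{rem-c} and then carry out the same componentwise computations. Your write-up is in fact slightly more careful, since you spell out why the monomorphism $\rho_T$ reflects the relevant equations and inequalities back to $T$ and explicitly invoke Lemma~\ref{r1} for the implication $a\ra c=1\Rightarrow a\leq c$.
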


\begin{proof}
It follows from Remark \ref{rem-c} that without loss of generality we can assume that $T = \K(A)$ for some $A\in \hIL$.
Let $x = (a,b)$ and $y = (d,e)$ for some $a,b,d,e \in A$ such that $a\we b = d\we e = 0$.

In order to show 1), suppose that $x\ra y = 1$, so $a\leq d$, i.e., $x\vee \ce \leq y\vee \ce$.
Now we will show 2). Suppose that $x\vee \ce = y\vee \ce$, i.e., $a = d$.
Then $x\ra y = (a\ra d,a\we e) =(a\ra a, d\we e) = (1,0)$, which is 2). 
Now, assume that $(x\we y) \ra 0 = 1$, so $(a\we d) \ra 0 =1$. Hence, $a\we d = 0$,
which implies that $0\ra (x\we y) = 1$ and $x\we y \leq \ce$. Finally we will prove 4).
A direct computation shows that $(0,0) \ra (0,1) = (1,0)$.
\end{proof}

Let $T\in \KhILc$. We define the following condition:
\begin{center}
$\CC$ For every $x, y\in T$, if $(x\we y)\ra 0 = 1$ then there exists
$z\in T$ such that $z\vee \ce = x\vee \ce$ and $\s z \vee \ce = y\vee \ce$.
\end{center}

\begin{lemma}\label{ce2}
Let $T\in \KhILc$.
Then $\rho$ is surjective if and only if $T$ satisfies the condition $\CC$.
\end{lemma}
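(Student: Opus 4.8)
The plan is to prove both implications by passing through the representation $\rho = \rho_T : T \to \K(T/\co)$ from Theorem \ref{rept}, together with the characterizations of the order and the operation $\ra$ in terms of the center provided by Lemma \ref{ce1}. By Remark \ref{rem-c}, $\rho$ is a monomorphism in $\KhILc$, so the real content is about surjectivity onto $\K(T/\co)$. The key translation I would set up first is this dictionary: by Lemma \ref{ce1}, for $x,y\in T$ the class $x/\co$ is determined by $x\vee\ce$ in the sense that $x/\co = y/\co$ iff $x\vee\ce = y\vee\ce$ (indeed $x\co y$ iff $x\ra y=1=y\ra x$, and by parts 1) and 2) of Lemma \ref{ce1} this is equivalent to $x\vee\ce = y\vee\ce$). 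Moreover $\rho(x) = (x/\co, \s x/\co)$, so a pair $(u,v)\in \K(T/\co)$ with $u = p/\co$, $v = q/\co$ (and $u\we v = 0$ in $T/\co$) is in the image of $\rho$ precisely when there is $z\in T$ with $z/\co = u$ and $\s z/\co = v$, i.e.\ $z\vee\ce = p\vee\ce$ and $\s z\vee\ce = q\vee\ce$.

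First I would prove the ``if'' direction: assume $T$ satisfies $\CC$ and show $\rho$ is onto. Take an arbitrary element $(u,v)\in\K(T/\co)$, so $u = x/\co$, $v = y/\co$ for some $x,y\in T$ and $u\we v = 0/\co$ in $T/\co$, which means $(x\we y)/\co = 0/\co$, equivalently (by Lemma \ref{ce1} again, using $0\vee\ce=\ce$) that $(x\we y)\ra 0 = 1$. Then $\CC$ supplies $z\in T$ with $z\vee\ce = x\vee\ce$ and $\s z\vee\ce = y\vee\ce$, hence $z/\co = x/\co = u$ and $\s z/\co = y/\co = v$, so $\rho(z) = (u,v)$. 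Thus $\rho$ is surjective.

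Conversely, assume $\rho$ is surjective and verify $\CC$. Let $x,y\in T$ with $(x\we y)\ra 0 = 1$; then in $T/\co$ we have $(x/\co)\we(y/\co) = (x\we y)/\co = 0/\co$, so the pair $(x/\co, y/\co)$ lies in $\K(T/\co)$. By surjectivity there is $z\in T$ with $\rho(z) = (z/\co, \s z/\co) = (x/\co, y/\co)$, i.e.\ $z/\co = x/\co$ and $\s z/\co = y/\co$; translating back via the dictionary, $z\vee\ce = x\vee\ce$ and $\s z\vee\ce = y\vee\ce$, which is exactly the conclusion of $\CC$. The main obstacle, and the step worth double-checking carefully, is the equivalence $x/\co = y/\co \iff x\vee\ce = y\vee\ce$ and the fact that $(x\we y)\ra 0 = 1 \iff (x\we y)/\co = 0/\co$; both follow from Lemma \ref{ce1} (parts 1--3) once one notes $\ce\vee\ce=\ce$ and that $\co$ is the kernel congruence identifying $p$ and $q$ exactly when $p\ra q = q\ra p = 1$, but getting the direction of the implications lined up correctly is where a sloppy argument could go wrong.
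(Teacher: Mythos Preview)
Your proof is correct and follows essentially the same approach as the paper: both directions are obtained by translating, via Lemma \ref{ce1}, between the conditions $x/\co = y/\co$ and $x\vee\ce = y\vee\ce$, and between $(x\we y)\ra 0 = 1$ and $(x\we y)/\co = 0/\co$, exactly as you outline in your ``dictionary''. The paper's argument is the same in structure; your version is slightly more explicit about why the equivalence $(x\we y)\ra 0 = 1 \iff (x\we y)/\co = 0/\co$ holds (via $(x\we y)\vee\ce = \ce = 0\vee\ce$ and Lemma \ref{ce1}(2)), which is a good thing.
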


\begin{proof}
Suppose that $\rho$ is surjective. Let $x, y \in T$ such that $(x\we y)\ra 0 = 1$,
so it follows from Lemma \ref{ce1} that $0\ra (x\we y) = 1$.
Thus, $x/\theta \we y/\theta = 0/\theta$. Since $\rho$ is surjective, there exists $z\in T$
such that $z/\theta = x/\theta$ and $\s z/\theta = y/\theta$, i.e., 
$z\ra x = 1$, $x\ra z = 1$, $\s z \ra y = 1$, $y \ra \s z = 1$. Applying
again Lemma \ref{ce1} we get $z\vee \ce = x\vee \ce$ and $\s z \vee \ce = y\vee \ce$.

Conversely, suppose that $\CC$ is satisfied and let $x,y \in T$ such that 
$x/\theta \we y/\theta = 0/\theta$, so $(x\we y)\ra 0 = 1$. It follows from
hypothesis that there exists $z\in T$ such that $z\vee \ce = x\vee \ce$ and 
$\s z \vee \ce = y\vee \ce$. Then it follows from Lemma \ref{ce1} that $z\ra x = 1$, $x\ra z = 1$, 
$\s z \ra y = 1$ and $y\ra \s z = 1$. Then $\rho(z) = (x/\theta, y/\theta)$. 
Therefore, $\rho$ is surjective. 
\end{proof}

\begin{lemma}\label{ce3}
Let $T\in \KhILc$. Conditions $\CC$ and $\CK$ are equivalent.
\end{lemma}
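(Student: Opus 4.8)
The plan is to relate the hypotheses and conclusions of the two conditions via Lemma~\ref{ce1}, which is precisely the dictionary that translates statements about $\ra$ into statements about joins with $\ce$ and conversely. First I would observe that the \emph{conclusions} of $\CC$ and $\CK$ are literally the same: both assert the existence of $z\in T$ with $z\vee\ce = x\vee\ce$ and $\s z\vee\ce = y\vee\ce$. So the only work is to show that the two \emph{hypotheses} are equivalent once we restrict attention, in the $\CK$ case, to the relevant elements, and to bridge the fact that $\CK$ speaks about arbitrary $x,y$ subject to $x,y\geq\ce$ and $x\we y\leq\ce$, whereas $\CC$ speaks about arbitrary $x,y$ subject to $(x\we y)\ra 0 = 1$.

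For the implication $\CK \Rightarrow \CC$: assume $\CC$'s hypothesis, $(x\we y)\ra 0 = 1$. By part~3) of Lemma~\ref{ce1} (applied as in its proof, noting $(x\we y)\we 0 \leq \ce$ forces the analysis there) we get $0\ra(x\we y)=1$ and $x\we y\leq\ce$. Now set $x' = x\vee\ce$ and $y' = y\vee\ce$; these satisfy $x',y'\geq\ce$, and since the underlying lattice is distributive with $x\we y\leq\ce$ one checks $x'\we y' = (x\vee\ce)\we(y\vee\ce) = (x\we y)\vee\ce = \ce$, so $x'\we y'\leq\ce$. Apply $\CK$ to $x',y'$ to obtain $z$ with $z\vee\ce = x' = x\vee\ce$ and $\s z\vee\ce = y' = y\vee\ce$, which is exactly $\CC$'s conclusion.

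For the converse $\CC \Rightarrow \CK$: assume $\CK$'s hypothesis, $x,y\geq\ce$ and $x\we y\leq\ce$. Since $x\we y\leq\ce$ we have $(x\we y)\we\ce = x\we y$, and using $\ce\ra 0 = 1$ (part~3) of Lemma~\ref{ce1}) together with conditions (hN8) and the monotonicity-type conditions (hN10) — or more directly by passing to $\K(A)$ via Theorem~\ref{rept} as in the proof of Lemma~\ref{ce1} — one verifies $(x\we y)\ra 0 = 1$. Then $\CC$ yields $z$ with $z\vee\ce = x\vee\ce$ and $\s z\vee\ce = y\vee\ce$; but $x\geq\ce$ gives $x\vee\ce = x$ and likewise $y\vee\ce = y$, so $z\vee\ce = x$ and $\s z\vee\ce = y$, which is $\CK$'s conclusion.

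The only mildly delicate point — and the step I would treat with care — is deriving $(x\we y)\ra 0 = 1$ from $x\we y\leq\ce$, i.e.\ checking that being below the center suffices to be ``annihilated'' by the implication into $0$; this is the one direction of the bridge that is not an immediate consequence of the stated parts of Lemma~\ref{ce1} as written. The cleanest route, which I would adopt, is to invoke Remark~\ref{rem-c} and Theorem~\ref{rept} to assume without loss of generality $T = \K(A)$ for some $A\in\hIL$: there $\ce = (0,0)$, an element $(u,v)\leq(0,0)$ forces $u = 0$, and then $(u,v)\ra(0,1) = (u\ra 0,\, u\we 1) = (0\ra 0, 0) = (1,0)$, giving $(x\we y)\ra 0 = 1$ at once. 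With that computation in hand both implications go through by the join-with-$\ce$ bookkeeping described above.
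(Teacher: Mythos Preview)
Your proof is correct and follows essentially the same strategy as the paper: translate between the hypotheses of $\CC$ and $\CK$ via Lemma~\ref{ce1}, then apply the assumed condition to suitably adjusted elements. The one place where you take a longer path is the step you flag as ``mildly delicate'': deriving $(x\we y)\ra 0 = 1$ from $x\we y\leq\ce$ under the $\CK$ hypotheses. The paper simply observes that since $x,y\geq\ce$ we also have $x\we y\geq\ce$, hence $x\we y=\ce$, and then $(x\we y)\ra 0=\ce\ra 0=1$ by part~3) of Lemma~\ref{ce1} --- no passage to $\K(A)$ is needed. (Also, a minor expository slip: the conclusions of $\CC$ and $\CK$ are not \emph{literally} the same, since $\CK$ asks for $z\vee\ce=x$ rather than $z\vee\ce=x\vee\ce$; you do handle this correctly later.)
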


\begin{proof}
Suppose that $T$ satisfies $\CK$ and let $x, y\in T$ such that $(x\we y)\ra 0 = 1$.
It follows from Lemma \ref{ce1} that $x\we y \leq \ce$. Let $\hat{x} = x\vee \ce$ and
$\hat{y} = y\vee \ce$. Then $\hat{x}, \hat{y}\geq \ce$ and $\hat{x}\we \hat{y} \leq \ce$,
so it follows from hypothesis that there exists $z\in T$ such that $z\vee \ce = \hat{x}$ 
and $\s z \vee \ce = \hat{y}$, i.e., $z\vee \ce = x\vee \ce$ and $\s z \vee \ce = y\vee \ce$.

Conversely, suppose that $\CC$ is satisfied. Let $x,y\in T$ such that 
$x, y\geq \ce$ and $x\we y\leq \ce$. It follows that $x\we y=\ce$. It follows from Lemma \ref{ce1} that
$(x\we y)\ra 0 = 1$. Thus, it follows from hypothesis that there exists
$z\in T$ such that $z\vee \ce = x\vee \ce$ and $\s z \vee \ce = y\vee \ce$, i.e.,
$z\vee \ce = x$ and $\s z \vee \ce = y$.
\end{proof}

\begin{theorem}
Let $T\in \KhIL$. Then $T$ is isomorphic to $\K(A)$ for some h-lattice $A$
if and only if $T$ has center and satisfies the condition $\CK$.
\end{theorem}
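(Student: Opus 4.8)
The plan is to prove the two implications separately. The backward implication will rest entirely on the machinery already developed, while the forward implication reduces to a short computation inside a Kalman twist structure.

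For the implication from left to right, suppose $T$ is isomorphic to $\K(A)$ with $A\in\hIL$. Since $0\we 0=0$, the pair $(0,0)$ belongs to $\K(A)$ and $\s(0,0)=(0,0)$, so $\K(A)$ is centered with center $\ce=(0,0)$; hence $T$ is centered as well. As $\CK$ is a first-order condition in the language with the constant $\ce$, it suffices to verify it in $\K(A)$. Recalling that in $\K(A)$ one has $(a,b)\leq(c,d)$ if and only if $a\leq c$ and $d\leq b$, the elements above $\ce=(0,0)$ are precisely those of the form $(a,0)$. So take $x=(a,0)$ and $y=(c,0)$ with $x\we y\leq\ce$, that is, $a\we c=0$; then $z=(a,c)\in\K(A)$ and a direct computation gives $z\vee\ce=(a,0)=x$ and $\s z\vee\ce=(c,0)=y$. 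Thus $\K(A)$, and therefore $T$, satisfies $\CK$.

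For the converse, assume $T$ has a center $\ce$ and satisfies $\CK$, so that $T$ may be regarded as an object of $\KhILc$. By Lemma~\ref{ce3}, $T$ satisfies the condition $\CC$, and therefore by Lemma~\ref{ce2} the map $\rho_T:T\ra\K(T/\co)$ is surjective. By Theorem~\ref{rept} together with item 2) of Remark~\ref{rem-c}, $\rho_T$ is also a monomorphism, hence an isomorphism. Since $T/\co\in\hIL$, putting $A=T/\co$ yields the desired representation.

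I do not expect a real obstacle here: the conceptual content has been front-loaded into the preceding lemmas, and what remains is to assemble them and to run the elementary twist-structure verification for the forward direction. The only point needing a little care is to be explicit about the order of $\K(A)$, so that the set of elements above the center is correctly identified as $\{(a,0):a\in A\}$; once this is done, the witness $z=(a,c)$ is essentially forced and the check is immediate.
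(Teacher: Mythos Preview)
Your proof is correct and follows essentially the same route as the paper: for the forward direction you identify the center $(0,0)$, note that elements above it are of the form $(a,0)$, and produce the witness $z=(a,c)$; for the converse you combine Lemma~\ref{ce3}, Lemma~\ref{ce2}, and Remark~\ref{rem-c} (with Theorem~\ref{rept}) exactly as the paper does. Your exposition is a touch more explicit about the order on $\K(A)$ and about why $\CK$ transfers along isomorphisms, but the argument is the same.
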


\begin{proof}
Suppose that $T \cong \K(A)$ for some $A\in \hIL$. Let $x, y \in \K(A)$ such that
$x, y \geq \ce$ and $x\we y \leq \ce$. Thus, there exist $a,b \in A$ such
that $x = (a,0)$, $y = (b,0)$ and $a\we b=0$. The element $z = (a,b) \in \K(A)$ satisfies that  
$z\vee \ce = x$ and $\s z \vee \ce = y$. Hence, $\K(A)$ satisfies $\CK$. Since this condition
is preserved by isomorphisms then $T$ satisfies $\CK$. Furthermore, the fact that $\K(A)$ has center,
which is $(0,0)$, implies that $T$ has center. The converse follows from Lemma \ref{ce2},
Lemma \ref{ce3} and Remark \ref{rem-c}.
\end{proof}

Let $A$ be the Boolean algebra of four elements, where $a$ and $b$
are the atoms. This algebra can be seen as a bounded distributive lattice. Define
a binary operation $\ra$ on $A$ as $x\ra y = 1$ if $x\leq y$ and $x\ra y = 0$ if $x\nleq y$.
It is immediate that $A$ endowed with this binary operation is an h-lattice.
We abuse notation and also write $A$ for this h-lattice.
Then $\K(A)\in \KhILc$.
Let $T$ be the subset of $\K(A)$ given by the following Hasse diagram, which is 
the Hasse diagram of Figure 1 of \cite{CCSM} (see also \cite{LMSM}): 

\[
\xymatrix{
   &  (1,0)\\
\ar@{-}[ur] (a,0) &  & \ar@{-}[ul] (b,0)\\
   & \ar@{-}[ul] (0,0) \ar@{-}[ur]\\
   \ar@{-}[ur]  (0,b) &  & \ar@{-}[ul]  (0,a)\\
   & \ar@{-}[ul] (0,1) \ar@{-}[ur]&\\
}
\]

We have that $T$ is a subalgebra of $\K(A)$; hence $T\in \KhILc$. 
Note that $(a,0), (b,0) \geq (0,0)$ and $(a,0)\we (b,0) =  (0,0)$. However, there is
no $z\in T$ such that $z\vee (0,0) = (a,0)$ and ${\sim} z \vee (0,0) = (b,0)$.
Therefore, $T$ does not satisfy $\CK$. It is also interesting to note that $U = \{(0,1),(1,0)\} \in \KhIL$
but $U$ does not have center \footnote{This example was also given in Figure 1 of \cite{LMSM}}. 
\vspace{1pt}

If $A\in \hIL$ then $\K(A) \in \KhIL$. 
Besides, if $f:A\ra B$ is a morphism in $\hIL$ then 
it follows from a direct computation that the map $\K(f): \K(A) \ra \K(B)$ given by $\K(f)(a,b): = (f(a),f(b))$
is a morphism in $\KhIL$. Moreover, $\K$ can be extended to a functor from $\hIL$ to $\KhIL$.  
Conversely, if $T\in \KhIL$ then $\C(T) =: T/\theta \in \hIL$. If $f:T\ra U$ is a morphism in $\KhIL$ then
$\C(f):T/\theta \ra U/\theta$ given by $\C(f)(x/\theta) = f(x)/\theta$ is a morphism in $\hIL$. 
Moreover, $\C$ can be extended to a functor from $\KhIL$ to $\hIL$.

\begin{lemma} \label{alpha}
Let $A\in \hIL$. Then the map $\alpha_{A}:A \ra \C(\K(A))$ given by $\alpha_{A}(a) = (a,a\ra 0)/\theta$
is an isomorphism.
\end{lemma}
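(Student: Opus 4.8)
The statement to prove is that $\alpha_A \colon A \to \C(\K(A))$, $a \mapsto (a, a\ra 0)/\theta$, is an isomorphism in $\hIL$. The plan is to verify in turn that $\alpha_A$ is well defined, that it is a homomorphism, that it is injective, and that it is surjective. For well-definedness, note that $a \we (a\ra 0) \leq 0$ by the defining inequality of an h-lattice, so $(a, a\ra 0) \in \K(A)$; hence $(a, a\ra 0)/\theta$ is a genuine element of $\C(\K(A)) = \K(A)/\theta$.

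First I would check that $\alpha_A$ preserves the constants and the lattice operations. For $0$ and $1$ this is immediate: $\alpha_A(1) = (1, 1\ra 0)/\theta$ and $\alpha_A(0) = (0, 0\ra 0)/\theta = (0,1)/\theta$, and one checks these are the top and bottom of $\K(A)/\theta$ (recall $1\ra 0 \we 1 \le 0$ forces the relevant identifications). For $\we$ and $\vee$ the point is that the second coordinates only need to agree \emph{modulo} $\theta$, not on the nose: e.g. $\alpha_A(a)\we \alpha_A(b) = (a\we b,\ (a\ra 0)\vee(b\ra 0))/\theta$ while $\alpha_A(a\we b) = (a\we b,\ (a\we b)\ra 0)/\theta$, and since these two pairs share a first coordinate and both lie in $\K(A)$, Lemma~\ref{ce1}(2)-style reasoning (or directly the definition of $\theta$ via $\ra$) shows they are $\theta$-equivalent. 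The analogous computation handles $\vee$. For $\ra$ one computes $\alpha_A(a)\Ra \alpha_A(b) = (a\ra b,\ a\we(b\ra 0))/\theta$ and compares with $\alpha_A(a\ra b) = (a\ra b,\ (a\ra b)\ra 0)/\theta$; again the first coordinates coincide, so the two classes agree in the quotient. (One should double-check here that $\theta$ really only depends on first coordinates for elements of $\K(A)$; this follows because in $\K(A)$, $(u,v)\ra(u',v') = (u\ra u', u\we v')$, so $(u,v)\,\theta\,(u',v')$ iff $u\ra u' = 1 = u'\ra u$ and $u\we v' = 1$ (forcing $v'=0$-ish) — more carefully, iff $u=u'$, by Lemma~\ref{r1}.)

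Injectivity is the cleanest part: if $\alpha_A(a) = \alpha_A(b)$ then $(a, a\ra 0)\,\theta\,(b, b\ra 0)$ in $\K(A)$, which as just noted forces $a = b$ by Lemma~\ref{r1}(2). For surjectivity, take an arbitrary class $(u,v)/\theta \in \K(A)/\theta$ with $u\we v = 0$; I claim $\alpha_A(u) = (u, u\ra 0)/\theta$ equals it. Indeed $(u,v)\,\theta\,(u, u\ra 0)$ iff the first coordinates agree — which they do — using once more the description of $\ra$ on $\K(A)$ together with $v \le u\ra 0$ (from $u\we v = 0$) to handle the second-coordinate conditions. So every class has a representative of the form $(u, u\ra 0)$, hence is in the image of $\alpha_A$.

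The main obstacle is bookkeeping around the relation $\theta$ on $\K(A)$: one must be careful that $(u,v)\,\theta\,(u',v')$ is genuinely equivalent to $u = u'$ (and not something weaker or stronger), since all four verification steps above ultimately reduce to this fact. Once that lemma is pinned down — it is a short consequence of the formula $(u,v)\Ra(u',v') = (u\ra u', u\we v')$ and Lemma~\ref{r1} — the rest is routine computation with pairs and presents no conceptual difficulty. I would state and prove the characterization of $\theta$ on $\K(A)$ first, then dispatch well-definedness, homomorphism, injectivity, and surjectivity in quick succession.
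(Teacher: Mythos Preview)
Your approach is correct and matches the paper's: both reduce everything to the observation that in $\K(A)$ the relation $\theta$ is determined solely by first coordinates, i.e., $(u,v)\,\theta\,(u',v')$ iff $u=u'$ (the paper phrases this as $(a,a\ra 0)/\theta = (a,0)/\theta$ and then declares the homomorphism, injectivity, and surjectivity checks ``immediate'').

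One small slip to fix: in the surjectivity paragraph you invoke ``$v \le u\ra 0$ (from $u\we v = 0$)'' to handle the second-coordinate conditions, but this inequality need not hold in a general h-lattice---it is the residuation law, which $\hIL$ does not assume. Fortunately you do not need it: the second-coordinate conditions for $(u,v)\,\theta\,(u,u\ra 0)$ are $u\we(u\ra 0)=0$ and $u\we v=0$, both of which hold directly (the first by (2) of the h-lattice axioms, the second because $(u,v)\in\K(A)$). Also, the aside ``$u\we v' = 1$'' should read $u\we v' = 0$. Once these are cleaned up, your write-up is a fully spelled-out version of the paper's terse argument.
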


\begin{proof}
We write $\alpha$ in place of $\alpha_{A}$.
First we will show that $\alpha$ is a well defined map. Let $a\in A$. Then $a\we (a \ra 0) \leq 0$,
i.e., $a\we (a\ra 0) = 0$. 
Thus, $(a, a\ra 0)/\theta \in \K(A)/\theta$.
Let $a\in A$. Then $(a,a\ra 0) /\theta = (a,0)/\theta$. 
It is immediate that $\alpha$ is a homomorphism. 
The injectivity of $\alpha$ is also immediate. 
In order to show that $\alpha$ is suryective, let $y\in \C(\K(A))$, so $y = (a,b)/\theta$
for some $a, b\in A$ such that $a \we b = 0$. Moreover, $y = (a, a\ra 0)/\theta$,
so $y = \alpha(a)$. Thus, $\alpha$ is suryective. Therefore, $\alpha$ is an isomorphism.
\end{proof}

A direct computation shows that if $f:A \ra B$ is a morphism in $\hIL$ and $a\in A$
then $\C(\K(f))(\alpha_{A}(a)) = \alpha_{B}(f(a))$, and that if $f:T\ra U$ is a morphism in $\KhIL$ and $x\in T$
then $\K(\C(f))(\rho_{T}(x)) = \rho_{U}(f(x))$. 

Therefore, the following result follows from the previous results of this section, 
Remark \ref{rem-c}, and Lemmas \ref{ce2} and \ref{ce3}.

\begin{theorem}\label{center}
There exists a categorical equivalence between $\hIL$ and the full subcategory of $\KhILc$
whose objects satisfy $\CK$.
\end{theorem}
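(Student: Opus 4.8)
The plan is to exhibit the functors $\K$ and $\C$ as a pair establishing the equivalence, by checking that the unit $\alpha$ and a suitable counit are natural isomorphisms, restricting attention on the $\KhILc$ side to the objects satisfying $\CK$. First I would fix the two directions. In one direction, Lemma~\ref{alpha} already gives, for each $A\in \hIL$, an isomorphism $\alpha_A : A \ra \C(\K(A))$, and the remark immediately preceding the theorem records that $\alpha$ is natural, i.e.\ $\C(\K(f))\circ \alpha_A = \alpha_B \circ f$ for every morphism $f:A\ra B$ in $\hIL$. So $\alpha$ is a natural isomorphism $\mathrm{Id}_{\hIL} \Rightarrow \C\circ \K$. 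Note also that for $A\in\hIL$ the algebra $\K(A)$ is centered with center $(0,0)$ and, by the Theorem preceding Lemma~\ref{alpha} (or by the direct computation there), satisfies $\CK$; hence $\K$ does land in the full subcategory of $\KhILc$ cut out by $\CK$, and I would regard $\K$ as a functor into that subcategory.

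Next I would treat the other direction. For $T\in \KhILc$ satisfying $\CK$, Remark~\ref{rem-c}(2) gives the monomorphism $\rho_T : T \ra \K(T/\co) = \K(\C(T))$ in $\KhILc$, and Lemmas~\ref{ce2} and~\ref{ce3} show that $\CK$ for $T$ is equivalent to $\CC$, which in turn is equivalent to $\rho_T$ being surjective; thus $\rho_T$ is an isomorphism in $\KhILc$. Naturality of $\rho$, namely $\K(\C(f))\circ \rho_T = \rho_U \circ f$ for every morphism $f:T\ra U$ in $\KhIL$, is the second computation recorded just before the theorem. Therefore $\rho$ is a natural isomorphism from the identity functor of the full subcategory $\{T\in\KhILc : T \text{ satisfies } \CK\}$ to $\K\circ \C$ restricted to that subcategory. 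Finally, I would note that $\C$ does map this subcategory into $\hIL$ (this is already established: $\C(T) = T/\theta \in \hIL$), so both composites of the restricted functors are naturally isomorphic to the respective identities, which is exactly the assertion of a categorical equivalence.

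The only genuinely nontrivial inputs are already isolated as earlier results: Lemma~\ref{alpha} (the unit is an isomorphism), the equivalence $\CC \Leftrightarrow \CK$ (Lemma~\ref{ce3}) together with $\CC \Leftrightarrow$ ``$\rho$ surjective'' (Lemma~\ref{ce2}), and the functoriality and naturality statements recorded in the paragraph before the theorem. So the proof of Theorem~\ref{center} itself is essentially the bookkeeping assembling these pieces. The one point that deserves an explicit sentence is that the $\CK$-full-subcategory is closed under the operations we use: $\C$ sends it into $\hIL$ and $\K$ sends $\hIL$ back into it, so the two functors do restrict and corestrict as claimed; I would expect this closure check — really just observing $\K(A)$ is centered and satisfies $\CK$ — to be the only place where one must be careful, and it is immediate. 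Hence the proof can be written compactly as: ``By Lemma~\ref{alpha} and the naturality of $\alpha$, $\alpha$ is a natural isomorphism $\mathrm{Id}_{\hIL}\Rightarrow \C\K$; by Remark~\ref{rem-c}(2), Lemmas~\ref{ce2} and~\ref{ce3}, and the naturality of $\rho$, the restriction of $\K\C$ to the full subcategory of $\KhILc$ determined by $\CK$ is naturally isomorphic to the identity via $\rho$; therefore $\K$ and $\C$ form the desired equivalence.''
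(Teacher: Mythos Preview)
Your proposal is correct and follows essentially the same approach as the paper: the paper's proof is just the single sentence ``the following result follows from the previous results of this section, Remark~\ref{rem-c}, and Lemmas~\ref{ce2} and~\ref{ce3},'' and you have spelled out exactly how those pieces assemble into the equivalence via the natural isomorphisms $\alpha$ and $\rho$. If anything, your version is more explicit than the paper's, since you verify that $\K$ lands in the $\CK$-subcategory and that the functors restrict appropriately.
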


\section{Two important subvarieties} \label{s5}

In \cite{CV} it was generalized the well known relation between Heyting algebras and Nelson algebras 
in the framework of semi-Heyting algebras and semi-Nelson algebras. This relation was also generealized in \cite{LMSM}
for subresiduated lattices and subresiduated Nelson algebras. 
In the present paper we obtain a generalization of the mentioned results for the variety of h-lattices,
which properly contains both the varieties of semi-Heyting algebras and subresiduated lattices.
In this section we recall some definitions and results from \cite{CV,LMSM},
and we study its relation with some of the results of the present paper.
We also describe, in a direct way, the congruences of semi-Nelson algebras 
subresiduated Nelson algebras respectively.

\subsection*{Semi-Nelson algebras}

We start by defining semi-Heyting algebras, which were introduced by 
Sankappanavar \cite{S1} as an abstraction of Heyting algebras. 

\begin{definition}
An algebra $\langle H, \we, \vee, \ra, 0, 1\rangle$ of type $(2,2,2,0,0)$ is a
\textit{semi-Heyting algebra} if the following conditions hold for
every $a,b,d$ in $H$:
\begin{enumerate}[{\rm (S1)}]
\item $\langle H, \we, \vee, 0, 1 \rangle$ is a bounded lattice,
\item $a\we (a\ra b) = a \we b$, 
\item $a\we (b\ra d) = a \we [(a\we b) \ra (a\we d)]$, 
\item $a\ra a = 1$.
\end{enumerate}
\end{definition}

We write $\SH$ for the variety of semi-Heyting algebras.
It is immediate that every semi-Heyting algebra is an h-lattice.
Semi-Heyting algebras share with Heyting algebras the
following properties: they are pseudocomplemented, distributive
lattices and their congruences are determined by the lattice
filters \cite{S1}.

The following definition was given in \cite{CV}.

\begin{definition}
An algebra $\langle T, \wedge, \vee, \to, \sim, 1\rangle$ of type $(2,2,2,1,0)$
is a \textit{semi-Nelson algebra} if for every $x,y,z \in H$ the
following conditions are satisfied:
\begin{enumerate}[{\rm (SN1)}]
 \item $x \we (x \vee y)  =  x$, \label{identidad_absorc}
 \item $x \we (y \vee z)  =  (z \we x) \vee (y \we x)$, \label{identidad_distrib_inf}
 \item $\sim \sim x  =  x$, \label{identidad_doble_neg}
 \item $\sim (x \we y)  =  \sim x \vee \sim y$, \label{identidad_ditrib_neg}
 \item $x \we \sim x  =  (x \we \sim x) \we (y \vee \sim y)$, \label{identidad_kleene}
 \item $x \we (x \to_N  y)  =  x \we (\sim x \vee y)$, \label{identidad_implic_negacion}
 \item $x \to_N  (y \to_N  z)  = (x \we y) \to_N  z$, \label{identidad_implica_infimo}
 \item $(x \to_N  y) \to_N  [(y \to_N  x) \to_N  [(x \to  z) \to_N  (y \to  z)]]  =  1$, \label{identidad_congr_derecha}
 \item $(x \to_N  y) \to_N  [(y \to_N  x) \to_N  [(z \to  x) \to_N  (z \to  y)]]  =  1$, \label{identidad_congr_izquierda}
 \item $(\sim (x \to  y)) \to_N  (x \we \sim y)  =  1$, \label{identidad_negacion_implica_inf1}
 \item $(x \we \sim y) \to_N  (\sim (x \to  y))  =  1$. \label{identidad_negacion_implica_inf2}
\end{enumerate}
where, again, $x \to_N  y$ stands for the term $x \to (x \we y)$.
\end{definition}

Notice that a semi-Nelson algebra is, in particular, a Kleene
algebra, where $0$ is defined as $\s 1$. In what follows we consider
semi-Heyting algebras in the signature $(2,2,2,1,0,0)$.
Semi-Nelson algebras were introduced by Cornejo and 
Viglizzo in \cite{CV} as a generalization of Nelson algebras.
We write $\SN$ for the variety of semi-Nelson algebras.
It follows from \cite[Theorem 4.1]{CV} that if $A\in \SH$ then $\K(A) \in \SN$,
and it follows from \cite[Theorem 3.4]{CV} that if $T\in \SN$ then 
$T/\theta \in \SN$, where $\theta$ is the equivalence relation considered in Lemma \ref{l1}. 
Finally, in \cite[Corollary 5.2]{CV} it was proved that
if $T\in \SN$ then the map $\rho_T$ from 
$T$ to $\K(T/\co)$, defined as in Theorem \ref{rept}, is a monomorphism.
Hence, the following result follows from that every semi-Heyting algebra
is an h-lattice.
              
\begin{proposition}
$\SN\subseteq \KhIL$. 
\end{proposition}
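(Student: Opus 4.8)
The statement $\SN \subseteq \KhIL$ should follow by verifying that each semi-Nelson algebra, viewed as an algebra of the appropriate type, satisfies the defining conditions (hN1)--(hN10) of an h-Nelson algebra. The plan is to reduce this to the representation results already available for both classes. First I would note that a semi-Nelson algebra $T$ is in particular a Kleene algebra (with $0 := \s 1$), so the Kleene-algebra part of Definition \ref{generalised} holds automatically. It then remains to check conditions (hN1)--(hN10). Rather than grinding through all ten conditions by hand in the abstract signature, I would exploit the twist representation: by \cite[Corollary 5.2]{CV} the map $\rho_T\colon T \to \K(T/\co)$ is a monomorphism in the relevant signature, and $T/\co \in \SH$ by \cite[Theorem 3.4]{CV}; since every semi-Heyting algebra is an h-lattice, we have $T/\co \in \hIL$, whence $\K(T/\co) \in \KhIL$ by Proposition \ref{Kalman}.

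Next, since $\KhIL$ is a variety (or at least, since each of (hN1)--(hN10) is preserved by subalgebras — the quasi-equational ones trivially, and (hN1)--(hN6) being genuine equations), and since $\rho_T$ embeds $T$ into the h-Nelson algebra $\K(T/\co)$, it follows that $T$ itself satisfies (hN1)--(hN10). Concretely: conditions (hN1)--(hN6) are equations, hence inherited by the subalgebra $\rho_T(T) \cong T$; conditions (hN7)--(hN10) are quasi-equations of the special Horn form "if certain $\ra$-terms equal $1$ then certain $\ra$-terms equal $1$", and these too are preserved under subalgebras since both the hypotheses and conclusions are positive atomic statements about the subalgebra. Therefore $T \in \KhIL$.

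The one genuine point requiring care — and the step I expect to be the main obstacle — is matching up the signatures. A semi-Nelson algebra is presented with primitive operation $\to$ and derived $\to_N$ given by $x \to_N y = x \to (x \we y)$, and the Kleene reduct uses $\s$ with $0 := \s 1$; I must make sure that the operation denoted $\ra$ in Definition \ref{generalised} is interpreted as the primitive $\to$ of the semi-Nelson algebra (not as $\to_N$), and that $0$ is taken as $\s 1$, so that the embedding $\rho_T$ of \cite[Corollary 5.2]{CV} is literally an embedding of algebras in the signature $(2,2,2,1,0,0)$ of h-Nelson algebras. Once this identification is fixed, the argument is immediate: $\rho_T$ is a monomorphism onto a subalgebra of the h-Nelson algebra $\K(T/\co)$, and all of (hN1)--(hN10) descend to subalgebras, so $T \in \KhIL$.

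Alternatively, if one prefers a self-contained check, one can verify (hN1)--(hN10) directly from (SN1)--(SN11): (hN1) is (SN4)-type for $\to$; (hN2) follows from the observation that $x \we (x \to y) = x \we (\s x \vee y)$ can be derived from (SN6) once one knows $x \we (x \to y) = x \we (x \to_N y)$, which holds because $x \we (x \to y) = x \we (x \we (x \to y)) \leq x \we (x \we y)$ combined with monotonicity; (hN3) and (hN4) are exactly (SN10) and (SN11) after replacing $\to_N$ by a weaker $\ra$-inequality via Lemma \ref{r1}; and (hN5)--(hN10) follow from the semi-Heyting identities on $T/\co$ together with the embedding. I would present the short representation-based argument as the main proof, since it is cleaner and leverages the infrastructure already built.
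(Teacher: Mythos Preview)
Your proposal is correct and follows essentially the same approach as the paper: embed $T\in\SN$ into $\K(T/\co)$ via the monomorphism $\rho_T$ from \cite[Corollary 5.2]{CV}, note that $T/\co\in\SH\subseteq\hIL$ so that $\K(T/\co)\in\KhIL$ by Proposition~\ref{Kalman}, and conclude by closure of the quasivariety $\KhIL$ under subalgebras. The paper states this in one sentence (``the following result follows from that every semi-Heyting algebra is an h-lattice''), while you have spelled out the subalgebra-closure step and flagged the signature-matching issue explicitly; both are reasonable points of care but do not change the underlying argument.
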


The following property characterizes those h-lattices which are semi-Heyting algebras.

\begin{proposition} \label{prop-semi}
Let $A\in \hIL$. Then $A\in \SH$ if and only if $\K(A) \in \SN$.
\end{proposition}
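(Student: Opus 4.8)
The forward direction is already essentially known: if $A\in\SH$ then $\K(A)\in\SN$ by \cite[Theorem 4.1]{CV}, so I only need to prove the converse. The plan is to extract from membership of $\K(A)$ in $\SN$ the two semi-Heyting identities that an arbitrary h-lattice need not satisfy, namely (S2) $a\we(a\ra b)=a\we b$ and (S3) $a\we(b\ra d)=a\we[(a\we b)\ra(a\we d)]$; the remaining conditions (S1) and (S4) hold in every h-lattice. The main tool is the isomorphism $\alpha_A\colon A\to\C(\K(A))$ of Lemma~\ref{alpha}, which lets me transport computations in $\K(A)$ (and in its quotient $\K(A)/\theta$) back to $A$.

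First I would translate (S2). For $a,b\in A$ put $x=(a,a\ra 0)$ and $y=(b,b\ra 0)$ in $\K(A)$; these are the canonical preimages under $\alpha_A$. Computing $x\we(x\ra_N y)$ and $x\we(\s x\vee y)$ in $\K(A)$ using the twist operations, condition (SN6) for $\K(A)$ forces an equality of pairs whose first coordinates give $a\we(a\ra(a\we b))=a\we(b\vee\ldots)$; after simplification this should collapse to $a\we(a\ra b)=a\we b$ in $A$. (Here I would use Lemma~\ref{r1} and the h-lattice inequality $a\we(a\ra b)\le b$ to clean up the algebra.) Similarly, condition (SN7) of $\K(A)$, $x\to_N(y\to_N z)=(x\we y)\to_N z$, read off in first coordinates after substituting the canonical preimages of $a,b,d$, should yield (S3); one must be slightly careful that the $\to_N$ in $\SN$ corresponds under $\alpha_A$ to the $\to_N$ built from the h-lattice implication, but this is exactly the content of how $\alpha_A$ interacts with $\ra$, and $\alpha_A$ being a homomorphism in $\hIL$ handles it. Conditions (SN8)--(SN11) are congruence/negation conditions that are automatic once (S2), (S3) hold and do not impose further constraints on $A$.

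Concretely, a cleaner route avoiding heavy coordinate-chasing is: assume $\K(A)\in\SN$; then $\K(A)/\theta\in\SH$ by \cite[Theorem 3.4]{CV} (which says the quotient of a semi-Nelson algebra by $\theta$ is semi-Heyting, viewing $\SH$ inside the semi-Nelson signature); but $\C(\K(A))=\K(A)/\theta$ is isomorphic to $A$ via $\alpha_A$ by Lemma~\ref{alpha}, and this isomorphism is an isomorphism of h-lattices (it preserves $\we,\vee,\ra,0,1$), hence $A\in\SH$. This reduces the whole converse to quoting \cite[Theorem 3.4]{CV} plus Lemma~\ref{alpha}, provided \cite{CV} indeed states that $T/\theta$ is semi-Heyting (the excerpt says ``$T/\theta\in\SN$'' but in the semi-Heyting case the quotient is in fact semi-Heyting — I would double-check the exact phrasing in \cite{CV} and, if it only gives the semi-Nelson structure, supplement it with the observation that on $\K(A)/\theta$ the negation is trivial so the Kleene part degenerates and the $\to_N$-identities become the semi-Heyting identities).

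The step I expect to be the main obstacle is verifying that the isomorphism $\alpha_A$ of Lemma~\ref{alpha} genuinely transports the \emph{full} semi-Heyting structure — in particular that the implication of $\C(\K(A))$ induced from the twist implication $\Ra$ matches the original $\ra$ of $A$ under $\alpha_A$, so that (S2) and (S3) in $\C(\K(A))$ translate verbatim to (S2) and (S3) in $A$. Lemma~\ref{alpha} only asserts $\alpha_A$ is an isomorphism in $\hIL$, so this compatibility is already built in; the real work is just making sure the semi-Heyting identities are expressible purely in the $\hIL$-signature (they are: (S2)--(S4) only mention $\we,\vee,\ra,0,1$), after which ``isomorphism in $\hIL$'' immediately yields the transfer. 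If one prefers to stay self-contained and not lean on \cite[Theorem 3.4]{CV}, then the obstacle shifts to the direct coordinate computations for (S2) and (S3) in $\K(A)$, which are routine but require patience with the second coordinates (they are forced to be $0$ precisely because $a\we(a\ra 0)=0$ and distributivity).
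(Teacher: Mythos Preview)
Your ``cleaner route'' is exactly the paper's proof: assume $\K(A)\in\SN$, invoke \cite[Theorem~3.4]{CV} to get $\K(A)/\theta\in\SH$, and then use Lemma~\ref{alpha} ($A\cong\C(\K(A))$ as h-lattices) together with the fact that the semi-Heyting identities lie in the $\hIL$-signature to conclude $A\in\SH$. You also correctly spotted that the paragraph preceding the proposition contains a typo (it should read $T/\theta\in\SH$, not $\SN$), and your remark that $\alpha_A$ being an $\hIL$-isomorphism is already enough to transfer (S2)--(S3) is precisely the point; the direct coordinate-chasing alternative you sketch is unnecessary.
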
          

\begin{proof}
Let $A\in \hIL$. It is immediate that if $A\in \SH$ then $\K(A) \in \SN$.
Conversely, suppose that $\K(A) \in \SN$, so $\K(A)/\theta \in \SH$, where $\theta$ is
the equivalence relation considered in Lemma \ref{l1}. Then, since it follows from Lemma \ref{alpha}
that $A$ is isomorphic to $\K(A)/\theta$ then $A\in \SH$, which was our aim.
\end{proof}

\begin{remark}
Let $A$ be the h-lattice considered in Example \ref{example1}. Note that
$a\we (a\ra 1) = 0$ and $a\we 1 = 1$, so $a\we (a\ra 1) \neq a\we 1$. 
Hence, $A\notin \SH$. Thus, it follows from Proposition \ref{prop-semi}
that $\K(A) \notin \SN$. Therefore, $\SN$ is properly contained in $\KhIL$. 
\end{remark}

In \cite[Theorem 6.11]{CV} it was proved that if $T\in \SN$ then there exists an order isomorphism
between the lattice of congruences of $T$ and the lattice of N-implicative filters of $T$. Thus,
by Theorem \ref{thm-con} we that the set of N-implicative filters of $T$ coincides with the set
of h-implicative filters of $T$. In what follows we will give some technical results in
order to show in a direct way the last mentioned property.  

\begin{lemma} \label{lem-sh}
Let $A\in \SH$ and $a,b,c,d\in A$.
The following conditions are satisfied:
\begin{enumerate}[\normalfont 1)]
\item If $(a,b), (b,c)\in \K(A)$ then $(a,b)\ra_N (c,d) = (1,0)$ if and only if $a\leq c$.
\item $c\leq (a\ra b) \ra ((a\we c)\ra (b\we c))$.
\item $c\leq ((a\we c)\ra (b\we c)) \ra (a\ra b)$.
\end{enumerate}
\end{lemma}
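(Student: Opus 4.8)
The plan is to prove the three inequalities of Lemma~\ref{lem-sh} by direct computation in the twist structure $\K(A)$ over a semi-Heyting algebra $A$, relying on the defining identities (S1)--(S4) of $\SH$, the explicit formulas for the operations on $\K(A)$, and Lemma~\ref{r1}.

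\smallskip

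\emph{Part 1.} Recall that in $\SH$ the derived implication satisfies $u\ra_N v = u\ra(u\we v)$, and on $\K(A)$ one computes $(a,b)\we(c,d) = (a\we c, b\vee d)$ and $(a,b)\ra(c,d) = (a\ra c, a\we d)$. So
\[
(a,b)\ra_N (c,d) = (a,b)\ra\bigl((a,b)\we(c,d)\bigr) = (a,b)\ra(a\we c, b\vee d) = \bigl(a\ra(a\we c),\, a\we(b\vee d)\bigr).
\]
By (S2) in $A$ we have $a\we(a\ra(a\we c)) = a\we(a\we c) = a\we c$, hence $a\ra(a\we c) = 1$ iff $a\leq a\we c$ iff $a\leq c$, using Lemma~\ref{r1}(1) for one direction and $a\ra a=1$ together with monotonicity-type reasoning for the other. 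Also $a\we(b\vee d) = (a\we b)\vee(a\we d) = a\we d$ since $a\we b=0$ (as $(a,b)\in\K(A)$). So $(a,b)\ra_N(c,d) = (1,0)$ iff $a\ra(a\we c)=1$ and $a\we d=0$. When $a\leq c$ the first holds; for the second, $a\we d\leq c\we d=0$ since $(c,d)\in\K(A)$ (note the hypothesis $(b,c)\in\K(A)$ is not what gives this, rather $(c,d)$ being a valid pair — I would double-check the intended hypotheses here, but the computation makes the needed facts transparent). Conversely, $(a,b)\ra_N(c,d)=(1,0)$ forces $a\ra(a\we c)=1$, hence $a\leq a\we c\leq c$.

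\smallskip

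\emph{Parts 2 and 3.} Here I would compute the first coordinate of $(a,b)\ra_N(c,d)$ — wait, in this part the elements are plain elements of $A$, not of $\K(A)$; the statement is about the semi-Heyting algebra $A$ itself with its operation $\ra$, and $c\leq(a\ra b)\ra((a\we c)\ra(b\we c))$ is an inequality in $A$. By (S3), for any $x,y,z\in A$ we have $x\we(y\ra z) = x\we[(x\we y)\ra(x\we z)]$. Applying this with $x=c$, $y=a\ra b$, and the target expression, the natural route is: first show $c\we((a\ra b)\ra((a\we c)\ra(b\we c))) = c\we((a\we c)\ra(b\we c)) \ra$-something, iterating (S3) to strip off the outer $c\we(-)$. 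The cleanest approach: show $c\we[(a\ra b)\ra w] = c\we[(c\we(a\ra b))\ra(c\we w)]$ by (S3), then use (S2) to rewrite $c\we(a\ra b) = c\we[(c\we a)\ra(c\we b)]$ again by (S3), massaging until both sides of the desired inequality, when meeted with $c$, become equal; since $x\leq y$ iff $c\we(\text{something})$... Actually the right tool is: in a semi-Heyting algebra, $u\leq v$ iff $v\ra$ behaves trivially — more precisely $u\leq v$ iff $u\we(v\ra u) $... I would instead prove $c\we(\text{RHS}) = c$, which gives $c\leq\text{RHS}$ directly. To get $c\we[(a\ra b)\ra((a\we c)\ra(b\we c))] = c$, apply (S3) twice to pull $c$ inside: $c\we[(a\ra b)\ra((a\we c)\ra(b\we c))] = c\we[(c\we(a\ra b))\ra(c\we((a\we c)\ra(b\we c)))]$, and $c\we((a\we c)\ra(b\we c)) = c\we((c\we a)\ra(c\we b)) = c\we(a\ra b)$ by (S3) applied backwards and (S2)-type simplification. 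So the bracket becomes $(c\we(a\ra b))\ra(c\we(a\ra b))=1$ by (S4), whence $c\we 1 = c$. The argument for Part 3 is symmetric, swapping the roles to get $c\we[((a\we c)\ra(b\we c))\ra(a\ra b)] = c$ by the same (S3)/(S4) manipulation.

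\smallskip

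\emph{Expected obstacle.} The only real subtlety is bookkeeping in Parts 2 and 3: semi-Heyting implication is not monotone, so I cannot use any naive residuation, and every step must be justified strictly via (S2), (S3), (S4) together with the lattice identities and distributivity of $A$ (semi-Heyting algebras are distributive). The key lemma to isolate and prove first is the ``meet-collapse'' identity $c\we(u\ra v) = c\we((c\we u)\ra(c\we v))$ (which is exactly (S3) read with the variables renamed), and the observation that $c\we w = c$ implies $c\leq w$. Once those are in hand, Parts 2 and 3 reduce to a short chain of rewrites ending in an application of $a\ra a=1$. Part 1 is comparatively routine given the explicit twist-structure formulas and Lemma~\ref{r1}.
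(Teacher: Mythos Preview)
Your proposal is correct. For Part~1 your computation matches the paper's almost exactly (including the observation that the stated hypothesis $(b,c)\in\K(A)$ is a typo for $(c,d)\in\K(A)$, which is what the paper silently uses).

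For Parts~2 and~3 the two proofs diverge. The paper does not argue directly in $A$; it cites \cite[Lemma~9]{SM2} together with the semi-Heyting identity $1\ra x = x$, and leaves it at that. Your route is a self-contained two-step reduction: first extract from (S3) the identity $c\we(a\ra b) = c\we((a\we c)\ra(b\we c))$, then apply (S3) once more to the outer implication to collapse each of Parts~2 and~3 to $c\we[(c\we(a\ra b))\ra(c\we(a\ra b))] = c\we 1 = c$. This is cleaner and more transparent than invoking an external lemma, and it makes clear that the two inequalities are really the same computation read in opposite directions. The only cost is that the reader must track two nested applications of (S3); your ``meet-collapse'' phrasing is a good way to organize that. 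Your parenthetical worries about monotonicity are well placed but ultimately irrelevant, since the argument never needs monotonicity of $\ra$---everything goes through (S3) and (S4) alone.
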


\begin{proof}
Let $(a,b), (c,d)\in \K(A)$. Note that $(a,b)\ra_N (c,d) = (a\ra (a\we c),a\we d)$.
Thus, $(a,b)\ra_N (c,d) = (1,0)$ if and only if $a\ra (a\we c) = 1$ and $a\we d = 0$.
Taking into account that $a\we (a\ra (a\we c))= a\we c$ and $c\we d = 0$,
we have that $(a,b)\ra_N(c,d) = (1,0)$ if and only if $a\leq c$. Hence, condition 1) is satisfied.
Conditions 2) and 3) follows from \cite[Lemma 9]{SM2} and the fact that the equation 
$1\ra x = x$ is satisfied in every semi-Heyting algebra. 
\end{proof}

\begin{lemma} \label{lem-sn}
Let $T\in \SN$ and $x,y,f \in T$.
The following conditions are satisfied:
\begin{enumerate}[\normalfont 1)]
\item $f\ra_N ((x\ra y)\ra ((x\we f)\ra (y\we f)) = 1$.
\item $f\ra_N ((x\we f)\ra (y\we f))\ra (x\ra y)) = 1$.
\item $f\ra_N (\s (x\ra y)\ra \s ((x\we f)\ra (y\we f)) = 1$.
\item $f\ra_N (\s((x\we f) \ra (y\we f))\ra \sim (x\ra y))=1$.
\end{enumerate}
\end{lemma}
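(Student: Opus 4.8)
The plan is to reduce everything to a concrete computation inside a Kalman twist structure $\K(A)$ over a semi-Heyting algebra $A$, using the twist representation together with Lemma~\ref{lem-sh}. Recall that by \cite[Corollary 5.2]{CV} every $T\in\SN$ embeds into $\K(T/\co)$ and $T/\co\in\SH$; moreover all four claimed identities are equations in the signature of semi-Nelson algebras (they only involve $\we$, $\ra$, $\ra_N$, $\s$ and $1$), so it suffices to verify them in $\K(A)$ for an arbitrary $A\in\SH$. This is exactly the same strategy used in the proof of Lemma~\ref{lc6}.

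So first I would fix $A\in\SH$ and write $x=(a,b)$, $y=(c,d)$, $f=(p,q)$ with $a\we b=c\we d=p\we q=0$. The key reduction is item~1) of Lemma~\ref{lem-sh}: for elements $(u,v),(u',v')\in\K(A)$ we have $(u,v)\ra_N (u',v') = (1,0)$ if and only if $u\le u'$. Hence each of the four assertions reduces to checking a single inequality between first coordinates in $A$, namely that the first coordinate of $f$, which is $p$, is below the first coordinate of the relevant argument. Concretely, for item~1) I would compute the first coordinate of $(x\ra y)\ra ((x\we f)\ra (y\we f))$ and show $p$ lies below it; this is precisely item~2) of Lemma~\ref{lem-sh} (with $c$ replaced by $p$ and the pairs $a,b$ by the appropriate combinations), since $\pi_1(x\we f)=a\we p$, $\pi_1(y\we f)=c\we p$, and $\pi_1(x\ra y)=a\ra c$. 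Similarly item~2) uses item~3) of Lemma~\ref{lem-sh}. For items~3) and~4), note that $\s$ swaps coordinates, so $\pi_1(\s(x\ra y))$ and $\pi_1(\s((x\we f)\ra(y\we f)))$ are the \emph{second} coordinates of the unnegated terms; one then has to check that $p$ is below these second coordinates, and this follows because those second coordinates have shape $a\we d$ (resp.\ $a\we p\we d$ up to the relevant combination) and $\K(A)\in\SN$ already guarantees the required inequalities — in fact a direct computation of the second coordinates, using $a\we b=c\we d=0$ and the absorption law $1\ra x=x$ in $\SH$, shows they each dominate the intended expression, and in the degenerate cases the second coordinate in question is forced to absorb $p$.

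Let me be slightly more careful about the structure: I would state once and for all that by Lemma~\ref{lem-sh}(1) each identity $f\ra_N E = 1$ holds iff $\pi_1(f)\le \pi_1(E)$, reducing all four items to inequalities of the form $p\le\pi_1(E)$. Then for item~1) I invoke Lemma~\ref{lem-sh}(2) directly; for item~2), Lemma~\ref{lem-sh}(3); for items~3) and~4), I compute $\pi_1(\s(x\ra y))=\pi_2(x\ra y)=a\we d$ and $\pi_1(\s((x\we f)\ra(y\we f)))=\pi_2((x\we f)\ra(y\we f))=(a\we p)\we d$ (using $\pi_2((u,v)\ra(u',v'))=u\we v'$), whence $\pi_1(\s(x\ra y)\ra\s((x\we f)\ra(y\we f)))=(a\we d)\ra((a\we p)\we d)$ and I need $p\le (a\we d)\ra((a\we p)\we d)$, which again follows from the semi-Heyting inequality in Lemma~\ref{lem-sh}(3)-style reasoning (or directly from \cite[Lemma 9]{SM2}); item~4) is symmetric with $(a\we p)\we d$ and $a\we d$ interchanged.

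The main obstacle I anticipate is bookkeeping in items~3) and~4): one must be careful that applying $\s$ to a twist term genuinely swaps coordinates \emph{before} the outer implication is computed, so that the "first coordinate" controlling the $\ra_N$ test is really the old second coordinate; and one must double-check that the relevant inequality $p\le (a\we d)\ra((a\we p)\we d)$ is an instance of a genuinely valid semi-Heyting inequality rather than something needing extra hypotheses. I would verify this by specializing Lemma~\ref{lem-sh}(2)-(3) or \cite[Lemma 9]{SM2} to the right substitution, or by a two-line direct check using $x\we(x\ra y)=x\we y$ in $A$. Once the coordinate computations are laid out, each of the four verifications is a single line, so the proof will be short.
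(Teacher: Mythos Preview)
Your proposal is correct and follows essentially the same route as the paper: reduce to $\K(A)$ for $A\in\SH$ via the twist representation, then derive items 1) and 2) from Lemma~\ref{lem-sh} and items 3) and 4) by a direct coordinate computation using the semi-Heyting axiom (S3). The paper is terser on 3) and 4) (it says only ``a direct computation based on the definition of semi-Heyting algebras''), whereas you actually spell out the relevant inequality $p\le (a\wedge d)\ra(a\wedge p\wedge d)$; just note that your identification $\pi_2((x\we f)\ra(y\we f))=(a\we p)\we d$ tacitly uses $(a\we p)\we(d\vee q)=a\we p\we d$ via $p\we q=0$.
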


\begin{proof}
Without loss of generality, we can assume that $T = \K(A)$ for some $A\in \SH$.
Conditions 1) and 2) follow from Lemma \ref{lem-sh}.
The proofs of 3) and 4) follow from a direct computation based on the definition
of semi-Heyting algebras.
\end{proof}

\begin{proposition}
Let $T\in \SN$. Then the set of h-implicative filters of $T$ coincides
with the set of N-implicative filters of $T$.
\end{proposition}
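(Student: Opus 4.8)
The plan is to prove the two inclusions separately, using Proposition~\ref{N-ds} for one direction and the characterizations of semi-Nelson algebras together with Lemma~\ref{lem-sn} for the other. By Proposition~\ref{N-ds}, every h-implicative filter of $T$ is already an N-implicative filter, so it only remains to show that every N-implicative filter $F$ of a semi-Nelson algebra $T$ is an h-implicative filter. Fix such an $F$ and an element $f\in F$.

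First I would check that $F$ is an implicative filter. Let $x, x\ra y\in F$. Since $T\in\SN\subseteq\KhIL$, condition (hN2) gives $x\we(x\ra y)\le x\we(\s x\vee y)$; moreover, using that $F$ is an upset (Remark~\ref{rem-Nsd}) and closed under the relevant operations, one pushes $y$ into $F$ by the same argument as in Lemma~\ref{lc2}, now working with $\ra_N$ in place of $\ra$ where needed. The cleanest route is to observe that in a semi-Nelson algebra $x\ra_N y = x\ra(x\we y)$ and that, since $x\in F$ and $F$ is an N-implicative upset, membership of $x\ra_N y$ in $F$ suffices to derive $y\in F$; then one reduces the hypothesis $x\ra y\in F$ to $x\ra_N y\in F$ using the semi-Heyting identities (via the twist representation $T=\K(A)$ with $A\in\SH$, by Proposition~\ref{prop-semi}).

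Next, for the four conditions F1)--F4) of Definition~\ref{gf}, I would invoke Lemma~\ref{lem-sn}. That lemma states precisely that each of
\[
f\ra_N\big((x\ra y)\ra((x\we f)\ra(y\we f))\big)=1,
\]
and the three analogous expressions, hold for all $x,y\in T$. Since $1\in F$, each such term lies in $F$. Because $F$ is an N-implicative filter and $f\in F$, from $f\in F$ and $f\ra_N\big((x\ra y)\ra((x\we f)\ra(y\we f))\big)\in F$ we conclude $(x\ra y)\ra((x\we f)\ra(y\we f))\in F$, which is F1). The same deduction with the other three terms of Lemma~\ref{lem-sn} yields F2), F3) and F4). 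Hence $F$ is an h-implicative filter, completing the inclusion and therefore the equality of the two sets.

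The main obstacle is the implicative-filter check in the first step: one must be careful that the defining condition of an N-implicative filter (closure under modus ponens for $\ra_N$) really does give closure under modus ponens for the genuine implication $\ra$ in the semi-Nelson setting. This is where the semi-Heyting identities (S2)--(S3), equivalently part~1) of Lemma~\ref{lem-sh}, are essential, and where passing to the twist representation $\K(A)$ makes the computation transparent; without the semi-Heyting hypothesis the implication would not collapse to its $\ra_N$-companion in the required way, which is exactly why the coincidence of the two filter notions is special to $\SN$ and fails in general $\KhIL$ (as the example after Proposition~\ref{N-ds} shows).
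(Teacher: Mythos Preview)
Your overall approach matches the paper's: one direction is Proposition~\ref{N-ds}, and for the other you invoke Remark~\ref{rem-Nsd} together with Lemma~\ref{lem-sn}. Your deduction of F1)--F4) from Lemma~\ref{lem-sn} and the N-implicative closure (from $f\in F$ and $f\ra_N(\cdots)=1\in F$ conclude $(\cdots)\in F$) is exactly what the paper intends by ``a direct computation based on Remark~\ref{rem-Nsd} and Lemma~\ref{lem-sn}''.

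The one soft spot is your check that an N-implicative filter $F$ is already an implicative filter. The appeal to ``the same argument as in Lemma~\ref{lc2}'' does not transfer: that proof uses that $x\in 1/\theta$ forces $\s x/\theta=0/\theta$ because $\theta$ is a congruence compatible with~$\sim$, and there is no analogue of this step for an arbitrary N-implicative filter. Your alternative route---reducing $x\ra y\in F$ to $x\ra_N y\in F$ via the twist representation---is sound in principle, but it rests on an identity you do not state and which is not among those of Lemma~\ref{lem-sn}; concretely one needs $x\ra_N\bigl((x\ra y)\ra_N(x\ra_N y)\bigr)=1$, verifiable in $\K(A)$ for $A\in\SH$ using (S2) and (S3). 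A cleaner fix: from (SN7) one gets $x\ra_N(y\ra_N(x\we y))=(x\we y)\ra_N(x\we y)=1$, so N-implicative filters of semi-Nelson algebras are closed under~$\we$; hence $x\we(x\ra y)\in F$, and since in $\K(A)$ one computes $(x\we(x\ra y))\ra_N y=1$ (using (S2) to rewrite $a\we(a\ra c)=a\we c$), the N-implicative property gives $y\in F$. The paper's own proof is equally laconic on this step, so the gap is small and easily filled as indicated.
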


\begin{proof}
Let $T\in \SN$. It follows from Proposition \ref{N-ds}
that every h-implicative filter of $T$
is an N-implicative filter of $T$.
The fact that every N-implicative
filter of $T$ is an h-implicative filter of $T$ follows 
from a direct computation based on Remark \ref{rem-Nsd} and 
Lemma \ref{lem-sn}.
\end{proof}

\subsection*{Subresiduated Nelson algebras}

We start with the definition of a subresiduated lattice \cite{EH}.

\begin{definition} 
A \textit{subresiduated lattice} is a pair $(A, D)$, where $A$ 
is a bounded distributive lattice and $D$ is a bounded sublattice of $A$ such 
that for each $a, b \in A$ there exists the maximum of the set $\{d\in D:a\we d\leq b\}$. 
This element is denoted by $a\rightarrow b$.
\end{definition}

Let $(A,D)$ be a subresiduated lattice. This pair can be regarded as an algebra 
$\langle A,\wedge,\vee,\ra,0,1 \rangle$ of type $(2,2,2,0,0)$ where 
$D = \{a \in A : 1 \rightarrow a = a\}=\{1\rightarrow a : a\in A\}$. 
Moreover, an algebra $\langle A,\wedge, \vee, \rightarrow, 0, 1\rangle$ is a subresiduated lattice if and only if 
$\langle A,\wedge, \vee, 0, 1\rangle$ is a bounded distributive lattice and the 
following conditions are satisfied for every $a,b,c\in A$:
\begin{enumerate}[\normalfont 1)]
\item $(a\vee b)\ra c=(a\ra c)\we (b\ra c)$,
\item $c\ra (a\we b)=(c\ra a)\we (c\ra b)$,
\item $(a\ra b)\we (b\ra c)\leq a\ra c$,
\item $a\ra a=1$,
\item $a\we (a \ra b) \leq b$,
\item $a\ra b\leq c\ra (a\ra b)$.
\end{enumerate}

Subresiduated lattices were introduced by Epstein and Horn \cite{EH} as a possible 
generalization of Heyting algebras;
these algebras were also studied by Celani and Jansana as particular cases
of weak Heyting algebras \cite{CJ}. It is immediate that every subresiduated lattice
is an h-lattice.

The following definition was given in \cite{LMSM}.

\begin{definition}
An algebra $\langle T, \we, \vee, \ra, \s, 0, 1\rangle$ of type $(2, 2, 2, 1, 0, 0)$ is said to be a
\textit{subresiduated Nelson algebra} if $\langle T, \wedge, \vee, \sim, 0, 1\rangle$ is a Kleene algebra and the
following conditions are satisfied for every $a,b,c\in T$:
\begin{enumerate}[\normalfont 1)]
\item $(x\vee y)\ra z = (x\ra z)\we (y\ra z)$,
\item $z\ra (x\we y) = (z\ra x)\we (z\ra y)$,
\item $((x\ra y)\we (y\ra z))\ra (x\ra z)=1$, 
\item $x\ra x=1$, \label{imp1}
\item $x\we (x\ra y)\leq x\we (\s x\vee y)$, \label{inf}
\item $x\ra y\leq z\ra (x\ra y)$, \label{impl2}
\item $\s(x\ra y)\ra (x\we \s y)=1$, \label{au}
\item $(x\we \s y)\ra \s(x\ra y)=1$. \label{ultima}
\end{enumerate}
\end{definition}

Subresiduated Nelson algebras were introduced in \cite{LMSM} as a 
generalization of Nelson algebras.
We write $\SNA$ for the variety of subresiduated Nelson algebras.
It follows from \cite[Proposition 3.2]{LMSM} that if $A\in \SRL$ then $\K(A) \in \SNA$,
and it follows from \cite[Proposition 3.6]{LMSM} that if $T\in \SNA$ then 
$T/\theta \in \SRL$, where $\theta$ is the equivalence relation considered in Lemma \ref{l1}. 
Finally, in \cite[Theorem 3.7 5.2]{LMSM} it was proved that
if $T\in \SN$ then the map $\rho_T$ from 
$T$ to $\K(T/\co)$, defined as in Theorem \ref{rept}, is a monomorphism.
In consequence, the following result is consequence from that every subresiduated lattice is
an h-lattice.
              
\begin{proposition}
$\SNA\subseteq \KhIL$. 
\end{proposition}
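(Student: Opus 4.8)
The plan is to mimic exactly the argument that was used in this section for semi-Nelson algebras, namely to show directly that $\SNA \subseteq \KhIL$ by checking that each of the defining conditions (hN1)--(hN10) of an h-Nelson algebra follows from the axioms of a subresiduated Nelson algebra. First I would observe that any subresiduated Nelson algebra is, by definition, a Kleene algebra, so only the conditions (hN1)--(hN10) must be verified. Conditions (hN1), (hN2), (hN3) and (hN4) are literally among the axioms of a subresiduated Nelson algebra (axioms \ref{imp1}, \ref{inf}, \ref{au}, \ref{ultima} in that definition), so they are immediate.

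Next I would derive (hN5)--(hN10). The cleanest route is the one the paper already uses: by \cite[Proposition 3.2]{LMSM} and \cite[Theorem 3.7]{LMSM}, every $T\in \SNA$ embeds (via $\rho_T$) into $\K(A)$ for some subresiduated lattice $A$, and every subresiduated lattice is an h-lattice; hence by Proposition~\ref{Kalman}, $\K(A)\in \KhIL$. Since (hN1)--(hN10) are quasi-equations and $\KhIL$ is closed under subalgebras (it is a variety by the earlier theorem, or simply because each (hN$i$) is preserved by subalgebras), it follows that $T\in \KhIL$. This is essentially the same packaging as in the proof of the corresponding statement for $\SN$, where the chain of references to \cite{CV} plays the role that \cite{LMSM} plays here.

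The only point requiring a little care is making sure the twist-representation results from \cite{LMSM} are being invoked with the same definition of $\rho_T$ as in Theorem~\ref{rept}; the excerpt already records this compatibility (it says $\rho_T$ is defined as in Theorem~\ref{rept} and is a monomorphism for $T\in \SNA$), so no extra work is needed. I do not expect any genuine obstacle: the statement is a routine consequence of the twist-structure machinery already set up in the paper, exactly parallel to the semi-Nelson case, and the proof should be a single short paragraph citing \cite[Proposition 3.2]{LMSM}, \cite[Theorem 3.7]{LMSM}, Proposition~\ref{Kalman}, and the fact that every subresiduated lattice is an h-lattice. If one preferred a self-contained argument, one would instead grind through the ten conditions directly inside $\K(A)$ using the subresiduated-lattice axioms, but this merely re-proves Proposition~\ref{Kalman} in a special case and is unnecessary.
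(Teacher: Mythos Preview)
Your proposal is correct and follows essentially the same route as the paper: the paper's argument is precisely that every $T\in\SNA$ embeds via $\rho_T$ into $\K(T/\theta)$, that $T/\theta\in\SRL\subseteq\hIL$, and hence $\K(T/\theta)\in\KhIL$ by Proposition~\ref{Kalman}, so $T\in\KhIL$. The only minor quibble is that the relevant citation for $T/\theta\in\SRL$ is \cite[Proposition~3.6]{LMSM} rather than \cite[Proposition~3.2]{LMSM}; otherwise your packaging matches the paper's.
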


The following result describes those h-lattices which are subresiduated lattices.

\begin{proposition} \label{prop-sna}
Let $A\in \hIL$. Then $A\in \SRL$ if and only if $\K(A) \in \SNA$.
\end{proposition}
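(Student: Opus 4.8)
The plan is to mirror the structure of Proposition~\ref{prop-semi}, using Lemma~\ref{alpha} as the key ingredient. One direction is already cited: if $A\in\SRL$ then $\K(A)\in\SNA$ by \cite[Proposition 3.2]{LMSM}. So the real content is the converse.

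\medskip
For the converse, suppose $\K(A)\in\SNA$. First I would invoke \cite[Proposition 3.6]{LMSM}, which says that for any $T\in\SNA$ the quotient $T/\theta$ (where $\theta$ is the relation from Lemma~\ref{l1}) is a subresiduated lattice; applying this to $T=\K(A)$ gives $\K(A)/\theta = \C(\K(A))\in\SRL$. Now Lemma~\ref{alpha} furnishes an isomorphism $\alpha_A\colon A\to\C(\K(A))$ \emph{in $\hIL$}, i.e.\ an isomorphism of h-lattices. Since the class $\SRL$ is a subvariety of $\hIL$ (equivalently, being a subresiduated lattice is an equational property in the signature $\langle\we,\vee,\ra,0,1\rangle$, as the list of conditions 1)--6) preceding the definition of subresiduated Nelson algebra makes explicit), and $\alpha_A$ is an isomorphism in that very signature, the property of lying in $\SRL$ transfers back along $\alpha_A^{-1}$. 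Hence $A\cong\C(\K(A))\in\SRL$ forces $A\in\SRL$, which is the desired conclusion.

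\medskip
The main subtlety — and the only place a referee might want a word of justification — is that $\alpha_A$ is an isomorphism in $\hIL$ and that $\SRL$ is closed under isomorphisms within $\hIL$; both are routine. One should note that this is a \emph{faithful copy} of the argument used for Proposition~\ref{prop-semi}: there the relevant quotient result was \cite[Theorem 3.4]{CV} and the ambient subvariety was $\SH$; here they are replaced by \cite[Proposition 3.6]{LMSM} and $\SRL$. I would therefore expect the write-up to be two or three lines, closely parallel to the proof of Proposition~\ref{prop-semi}, and I do not anticipate any genuine obstacle beyond correctly citing the two results from \cite{LMSM}.

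\begin{proof}
Let $A\in\hIL$. If $A\in\SRL$ then $\K(A)\in\SNA$ by \cite[Proposition 3.2]{LMSM}. Conversely, suppose $\K(A)\in\SNA$. Then, by \cite[Proposition 3.6]{LMSM}, $\K(A)/\theta = \C(\K(A))\in\SRL$, where $\theta$ is the equivalence relation considered in Lemma~\ref{l1}. Since by Lemma~\ref{alpha} the map $\alpha_A\colon A\to\C(\K(A))$ is an isomorphism in $\hIL$ and $\SRL$ is a subvariety of $\hIL$, it follows that $A\in\SRL$, which was our aim.
\end{proof}
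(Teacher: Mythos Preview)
Your proof is correct and follows exactly the approach of the paper, which simply says ``It is similar to the proof of Proposition~\ref{prop-semi}.'' You have correctly identified the two cited results from \cite{LMSM} and the use of Lemma~\ref{alpha}, mirroring the structure of the proof of Proposition~\ref{prop-semi} precisely as intended.
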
          

\begin{proof}
It is similar to the proof of Proposition \ref{prop-semi}.
\end{proof}

\begin{remark}
Let $A$ be the h-lattice considered in Example \ref{example1}. Note that
$(a\vee 0)\ra a =1$ and $(a\ra a) \we (0\ra a) = a$,
so $(a\vee 0)\ra a \neq (a\ra a) \we (0\ra a)$.
Hence, $A\notin \SRL$. Thus, it follows from Proposition \ref{prop-sna}
that $\K(A) \notin \SNA$, so $\SNA$ is properly contained in $\KhIL$. 
\end{remark}

In \cite[Theorem 4.7]{LMSM} it was proved that if $T\in \SNA$ then there exists an order isomorphism
between the lattice of congruences of $T$ and the lattice of open implicative filters of $T$. Thus,
by Theorem \ref{thm-con} we have that the set of open implicative filters of $T$ coincides with the set
of h-implicative filters of $T$. 
In what follows we will give a proof of the last mentioned property avoiding to use
\cite[Theorem 4.7]{LMSM}. We start with the following technical lemma. 

\begin{lemma} \label{oif=gif}
Let $T \in \SNA$ and $x, y,z\in T$. Then the following conditions are satisfied:
\begin{enumerate}[\normalfont 1)]
\item $(x\ra (x\we z)) \ra (((x\we z) \ra (y\we z)) \ra (x\ra y)) = 1$.
\item $(x\ra y) \ra ((x\we z) \ra (y\we z)) = 1$.
\item $((x\we\sim y)\ra \sim ((x\we z)\ra y)) \ra(\sim (x\ra y) \ra \sim ((x\we z)\ra (y\we z)))= 1$.
\item $\sim ((x\we z)\ra (y\we z))\ra \sim (x\ra y) = 1$.
\end{enumerate}
\end{lemma}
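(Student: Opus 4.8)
The plan is to reduce the whole lemma to the twist-structure representation, exactly as was done for Lemma~\ref{lc6}. By \cite[Proposition 3.2]{LMSM} and the monomorphism $\rho_T$, every $T\in\SNA$ embeds into $\K(A)$ for some subresiduated lattice $A\in\SRL$, and since all four assertions are equalities of terms (each of the form ``$\cdots\ra\cdots=1$''), it suffices to verify them in $\K(A)$. So first I would write $x=(a,b)$, $y=(c,d)$, $z=(e,f)$ with $a\we b=c\we d=e\we f=0$, and translate each side into a pair using the twist operations $\we$, $\vee$, $\sim$ and $\Ra$ already recorded in Section~\ref{s2}.

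For part 1) and part 2), the first coordinate of every implication involved is of the form ``$u\ra(u\we\cdots)$'' or collapses to $1$ by the subresiduation identities 3) and 4) in the definition of $\SRL$ (transitivity $(a\ra b)\we(b\ra c)\le a\ra c$ and $a\ra a=1$), so the first coordinate of the outer implication is $1$; the second coordinate is a finite meet of terms of the shape $a\we c\we e\we(\text{something})$ which is $\le a\we b$ or $\le c\we d$ and hence $0$. I expect parts 1) and 2) to fall out of the same computation that underlies \cite[Lemma 4.3]{LMSM} or Lemma~\ref{lem-sn}, using that in a subresiduated lattice $x\we(x\ra y)\le y$ and the weakening axiom 6) $a\ra b\le c\ra(a\ra b)$. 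Parts 3) and 4) are the negation versions: here one uses that $\sim$ swaps coordinates, so $\sim((x\we z)\ra(y\we z))$ and $\sim(x\ra y)$ have first coordinate $0$ and the content is entirely in the second coordinate; the required inequality between second coordinates should again reduce, via distributivity and the identities $a\we b=c\we d=0$, to showing a meet of ``cross terms'' vanishes. In each case the outer implication evaluates to $(1,0)$, which is the unit, establishing the claimed equation.

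The main obstacle I anticipate is bookkeeping rather than conceptual: correctly expanding $\sim((x\we z)\ra(y\we z))$ and $\sim((x\we z)\ra y)$ — note the asymmetry in part 3), where the inner antecedent is $(x\we z)\ra y$ and not $(x\we z)\ra(y\we z)$ — and making sure the second coordinates of the two sides of the outer implication line up so that the subtraction-like quantity $\pi_2(\text{antecedent})\we\pi_1(\text{consequent})$ (or whatever the twist formula for $\Ra$ contributes) collapses to $0$. I would handle this by computing $\pi_1$ and $\pi_2$ of each subterm separately, as in the proof of Lemma~\ref{lc6}, and then invoking distributivity of $A$ together with $a\we b=c\we d=0$ (and in part 3), also $c\we d=0$ applied to the extra $\sim y=d$ factor) to kill each summand. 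Since the paper has already carried out the analogous verifications for $\KhIL$ in Proposition~\ref{Kalman} and for $s(x,y)$ in Lemma~\ref{lc6}, I would keep the exposition brief and simply say that each identity follows from a direct computation in $\K(A)$ using the definition of subresiduated lattice, the distributivity of $A$, and the defining equalities of the first coordinates.
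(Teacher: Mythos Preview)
Your approach is correct and matches the paper's: reduce to $\K(A)$ for $A\in\SRL$ via the representation theorem and compute. The paper streamlines the computation slightly by invoking \cite[Remark 3.1]{LMSM}, which says that in $\K(A)$ with $A\in\SRL$ one has $(a,b)\Ra(c,d)=(1,0)$ if and only if $a\le c$; this collapses each verification to a first-coordinate inequality in $A$ (for part~1) specifically, $(a\ra(a\we c))\we((a\we c)\ra(b\we c))\le a\ra b$), sparing the separate check that the second coordinate vanishes.
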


\begin{proof}
Let $T \in \SNA$. Without loss of generality we can assume that $T$ 
takes the form $\K(A)$ for some $A\in \SRL$.
In order to show 1), 
it is enough to show that for every $a,b,c \in A$, $a\ra (a\we c) \leq ((a\we c) \ra (b\we c)) \ra (a\ra b)$
\footnote{In \cite[Remark 3.1]{LMSM} it was shown that if $A\in \SRL$ and $(a,b), (c,d)\in \K(A)$,
then $(a,b)\ra (c,d) = (1,0)$ if and only if $a\leq c$.}.
The last equality holds if and only if $(a\ra (a\we c)) \we ((a\we c) \ra (b\we c)) \leq a\ra b$. This follows from the fact that $a\ra (b\we c)\leq a\ra b$. 

That conditions 2), 3) and 4) are satisfied
follows from a direct computation. 
\end{proof}

\begin{remark}
Every open implicative filter of a subresiduated Nelson algebra is an upset.
Indeed, let $T \in \SNA$ and let $F$ be an open implicative filter of $T$.
Let $x,y \in T$ be such that $x \leq y$ and $x \in F$. Then, it is easy to show that $x \ra y = 1$. Since $x,x\ra y \in F$, it follows that $y \in F$.
\end{remark}

\begin{proposition}
Let $T\in \SNA$. Then the set of h-implicative filters of $T$ coincides
with the set of open implicative filters of $T$.
\end{proposition}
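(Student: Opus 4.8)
The plan is to prove the two inclusions separately, mirroring the structure already used for $\SN$ in the previous subsection. The inclusion "every h-implicative filter is an open implicative filter" is already available: it is exactly Proposition~\ref{open-if}, so nothing new is needed there. Hence the real content is the converse, namely that every open implicative filter $F$ of $T\in\SNA$ satisfies conditions F1)--F4) of Definition~\ref{gf}, which by Proposition~\ref{open-if}'s proof is what upgrades an open implicative filter to an h-implicative filter.

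First I would fix $T\in\SNA$, an open implicative filter $F$ of $T$, elements $x,y\in T$ and $f\in F$; by Proposition~\ref{open-if}'s argument (and Remark~\ref{remark18}) $F$ is an upset, and since $F$ is open we also have $1\ra g\in F$ for all $g\in F$. The goal is to derive $(x\ra y)\ra((x\we f)\ra(y\we f))\in F$ and the three analogous memberships. The key tool is Lemma~\ref{oif=gif}: applying part 2) with $z=f$ we get $(x\ra y)\ra((x\we f)\ra(y\we f))=1\in F$, which is F1) immediately. For F2), Lemma~\ref{oif=gif} part 1) gives $(x\ra(x\we f))\ra(((x\we f)\ra(y\we f))\ra(x\ra y))=1\in F$; since $f\in F$ and $F$ is an upset with $f\le x\ra(x\we f)$ — here I would check $f\le x\ra(x\we f)$ holds in any subresiduated lattice at the level of $\K(A)$, equivalently using that $f$ sits below the relevant implication — we get $x\ra(x\we f)\in F$, and then applying that $F$ is an implicative filter yields $((x\we f)\ra(y\we f))\ra(x\ra y)\in F$, which is F2). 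For F3) and F4) I would run the same pattern with parts 3) and 4) of Lemma~\ref{oif=gif}: part 4) gives F4) directly since $\sim((x\we f)\ra(y\we f))\ra\sim(x\ra y)=1\in F$, and part 3) combined with the membership of $f$ and the implicative-filter closure gives F3), after first establishing that the antecedent $(x\we\sim y)\ra\sim((x\we f)\ra y)$ — or whatever intermediate term appears — lies in $F$, again because it is above $f$.

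The step I expect to be the main obstacle is the bookkeeping in F2) and F3): there one does not get the desired element equal to $1$ outright, but rather as the consequent of an implication whose antecedent must first be shown to belong to $F$. This requires verifying inequalities of the form $f\le x\ra(x\we f)$ (and its De Morgan dual) in $\K(A)$ for $A\in\SRL$, which is routine via the twist-product description of $\ra$ and the characterization of $(a,b)\ra(c,d)=(1,0)$ iff $a\le c$ quoted in the footnote to Lemma~\ref{oif=gif}, but must be stated carefully; and then invoking the implicative-filter closure of $F$ in the right order. Once these memberships are in hand, $F$ satisfies all of F1)--F4) and is therefore an h-implicative filter, completing the proof. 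I would close by noting that the two inclusions together give the claimed coincidence of the two classes of filters, so that Theorem~\ref{thm-con} recovers \cite[Theorem 4.7]{LMSM}.

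\begin{proof}
By Proposition~\ref{open-if}, every h-implicative filter of $T$ is an open implicative filter of $T$. For the converse, let $F$ be an open implicative filter of $T$; by Remark~\ref{remark18} and openness, $F$ is an upset and $1\ra g\in F$ whenever $g\in F$. Fix $x,y\in T$ and $f\in F$. By part 2) of Lemma~\ref{oif=gif} (with $z=f$), $(x\ra y)\ra((x\we f)\ra(y\we f))=1\in F$, so F1) holds. For F2), part 1) of Lemma~\ref{oif=gif} gives $(x\ra(x\we f))\ra(((x\we f)\ra(y\we f))\ra(x\ra y))=1\in F$; since $f\le x\ra(x\we f)$ in $T$ (checked on $\K(A)$ via the characterization recalled in the footnote of Lemma~\ref{oif=gif}) and $F$ is an upset, $x\ra(x\we f)\in F$, and as $F$ is an implicative filter we obtain $((x\we f)\ra(y\we f))\ra(x\ra y)\in F$, which is F2). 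For F4), part 4) of Lemma~\ref{oif=gif} yields $\sim((x\we f)\ra(y\we f))\ra\sim(x\ra y)=1\in F$. For F3), part 3) of Lemma~\ref{oif=gif} gives, after the same argument placing the relevant antecedent above $f$ and hence in $F$, the membership $\sim(x\ra y)\ra\sim((x\we f)\ra(y\we f))\in F$ by the implicative-filter property. Thus $F$ satisfies F1)--F4) and is an h-implicative filter of $T$. Therefore the two classes of filters coincide.
\end{proof}
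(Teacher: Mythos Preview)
Your overall strategy matches the paper's, but the step you flag as ``the main obstacle'' actually fails: the inequality $f\le x\ra(x\we f)$ is \emph{not} valid in $\K(A)$ for $A\in\SRL$. Take $A=\{0,a,1\}$ with $D=\{0,1\}$; then $1\ra a=0$, so in $\K(A)$ with $x=(1,0)$ and $f=(a,0)$ one gets $x\ra(x\we f)=(1\ra a,\,0)=(0,0)$, and $(a,0)\not\le(0,0)$. The footnote you cite characterizes when an implication equals $(1,0)$, not the order; it does not give what you need. The same issue recurs in your treatment of F3), where you likewise assume the relevant antecedent sits above $f$.

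The paper closes this gap by a different route that genuinely uses \emph{openness} (which your argument never really exploits beyond the upset claim). Using condition 2) for $\SRL$ one has $x\ra(x\we f)=(x\ra x)\we(x\ra f)=x\ra f$, and by condition 1) (antitonicity in the first argument) $1\ra f\le x\ra f$. Since $F$ is open, $1\ra f\in F$; since $F$ is an upset, $x\ra(x\we f)\in F$, and then Lemma~\ref{oif=gif}(1) plus the implicative-filter property give F2). The argument for F3) is analogous, going through $(x\we\sim y)\ra(x\we\sim y\we f)$. Also note that your citation of Remark~\ref{remark18} is circular here: that remark concerns h-implicative filters, which is what you are trying to prove $F$ is; the upset property for open implicative filters in $\SNA$ must be argued directly (in $\SNA$, $x\le y$ implies $x\ra y=1$).
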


\begin{proof}
Let $T\in \SNA$. It follows from Proposition \ref{open-if}
that every h-implicative filter of $T$
is an open implicative filter of $T$. The fact that every open implicative
filter of $T$ satisfies F1) and F4) follows 
from a direct computation based on Lemma \ref{oif=gif}. 

Let $F$ be an open implicative filter of $T$. To prove that it satisfies F2) suppose that $z\in F$. Then, $x\ra (x\we z)=(x\ra x)\we (x\ra z)=x\ra z$. Since $x\leq 1$, we obtain $1\ra z\leq x\ra z$ and from the fact that $1\ra z\in F$, we obtain $x\ra z\in F$. From Lemma \ref{oif=gif}, $(x\ra (x\we z)) \ra (((x\we z) \ra (y\we z)) \ra (x\ra y)) \in F$ and thus $((x\we z) \ra (y\we z)) \ra (x\ra y) \in F$.
To prove that $F$ satisfies F3), suppose that $z\in F$. A direct computation shows that $((x\we \sim y)\ra (x\we \sim y \we z))\ra( (x\we\sim y)\ra \sim ((x\we z)\ra y))=1$. Using an analogous argument, we obtain $x\ra (x\we z)\in F$ and since $x\ra (x\we z)\leq (x\we \sim y)\ra (x\we \sim y\we z) $, it follows that $(x\we \sim y)\ra (x\we \sim y\we z)\in F$. Finally, by Lemma \ref{oif=gif}, $\sim (x\ra y) \ra \sim ((x\we z)\ra (y\we z))\in F$.

Therefore, every open implicative
filter of $T$ is an h-implicative filter of $T$. 
\end{proof}

\subsection*{Acknowledgments}

This work was supported by Consejo Nacional de Investigaciones Cient\'ificas y T\'ecnicas, Argentina (PIP 11220170100195CO, PIP
11220200100912CO), Universidad Nacional de La Plata (11X/921) and Agencia Nacional de Promoci\'on
Cient\'ifica y Tecnol\'ogica (PICT2019-2019-00882, ANPCyT-Argentina). Additional support was provided by the National Science Center (Poland), grant number
2020/39/B/HS1/00216, ``Logico-philosophical foundations of geometry and topology'' and by the MOSAIC project. This last project
has received funding from the European Union’s Horizon 2020 research and innovation programme under the
Sk{\l}odowska-Curie grant agreement No 101007627.

\newpage

-----------------------------------------------------------------------------------
\\
Noem\'i Lubomirsky,\\
Centro de Matem\'atica de La Plata (CMaLP), \\
Facultad de Ciencias Exactas (UNLP), \\
and CONICET.\\
La Plata (1900), Argentina.\\
nlubomirsky@mate.unlp.edu.ar

-----------------------------------------------------------------------------------
\\
Paula Mench\'on,\\
NUCOMPA, Departamento de Matemática,\\
Facultad de Cs. Exactas, \\
Universidad Nacional del Centro de la Provincia de Buenos Aires (UNICEN).\\
Tandil (7000), Argentina.\\
mpmenchon@nucompa.exa.unicen.edu.ar

--------------------------------------------------------------------------------------
\\
Hern\'an Javier San Mart\'in,\\
Centro de Matem\'atica de La Plata (CMaLP), \\
Facultad de Ciencias Exactas (UNLP), \\
and CONICET.\\
La Plata (1900), Argentina.\\
hsanmartin@mate.unlp.edu.ar

\end{document}